\newtheorem{theorem}{Theorem}[section]
\newtheorem{definition}[theorem]{Definition}
\newtheorem{prop}[theorem]{Proposition}
\newtheorem{corollary}[theorem]{Corollary}
\newtheorem{remark}[theorem]{Remark}
\newtheorem{example}[theorem]{Example}
\newtheorem{lemma}[theorem]{Lemma}
\def \C{\mathbb{C}}
\def \Q{\mathbb{Q}}
\def \R{\mathbb{R}}
\def \Z{\mathbb{Z}}
\begin{document}

\title[Period length of continued fractions]{$S$-units and period length of continued fractions of linear recursions}

\author{Veekesh Kumar, Vivek Singh and Johannes Sprang}

\pagenumbering{arabic}
\begin{abstract}
Let  $(A_n)_{n\in \Z}$ be a  linear recurrence sequence with values in a real quadratic field. In this paper, we study the question whether the period length of the continued fraction of $A_n$ is bounded as $n$ varies. The case where $(A_n)_n$ is a linear recurrence of degree $1$ has previously been solved by Corvaja and Zannier. Their result settled a problem posed by Mend\`es France about the length of the periods of the continued fractions for $\alpha^n$.
\end{abstract}

\address[Veekesh Kumar and Vivek Singh]{Department of Mathematics, Indian Institute of Technology Dharwad, Chikkamalligawad village, Dharwad, Karnataka - 580011, India.}
\address[Johannes Sprang]{ University of Duisburg-Essen, 45127 Essen, Germany.}

\email[]{veekeshk@iitdh.ac.in}
\email[]{MA23DP003@iitdh.ac.in}
\email[]{johannes.sprang@uni-due.de}

\subjclass[2010] {Primary 11J68, 11J87;  Secondary 11B37, 11R06 }
\keywords{Approximation to algebraic numbers,  Continued fraction, Period length, Power sums}

\maketitle

\section{Introduction}
It is well known that the continued fraction expansion of a quadratic irrational number is eventually periodic. For a quadratic number $\alpha$, we will use the notation $\ell(\alpha)$ to denote the period length of the continued fraction of $\alpha$ throughout the paper with the convention that $\ell(\alpha)=0$ if $\alpha$ is rational.  Given a parameterized, infinite family of quadratic irrational numbers such as a linear recurrence sequence of quadratic irrational numbers, it is, in general, very challenging to predict the lengths of the periods of their continued fraction expansions. In this context, first it was studied by Schinzel \cite{sch}, who proved that, if $P(x)$  is a non-constant polynomial with integer coefficients and positive leading term satisfying certain assumptions, then the sequence of period lengths of the continued fraction expansion of $\sqrt{P(n)}$ is unbounded. Later, in 2004, Corvaja and Zannier \cite{corv} applied the Schmidt Subspace Theorem to classify the quadratic irrational numbers for which the corresponding sequence of period lengths of $\alpha^n$ is either bounded, or unbounded which settle the problem posed by Mend\`es France \cite[Problem 6]{Me}. More precisely, they had shown that if $\alpha$ is not a unit in the ring of integer of $\mathbb{Q}(\alpha)$, then the sequence $(\ell(\alpha^n))$ is unbounded, otherwise it is bounded. By regarding $\{\alpha^n: n\in\mathbb{N}\}$  as special instance of linear recurrence sequences, it is natural to ask whether these results can be extended to an arbitrary linear recurrence sequence. In this paper, we study the question of unboundedness of the period lengths of linear recurrence sequences in real quadratic fields.
\smallskip

Let $K\subseteq \R$ be a real quadratic field and denote by $(\cdot)^\prime\colon K\xrightarrow{\sim} K$ the non-trivial automorphism of $K$. We consider linear recurrence sequences $\underline{A}=(A_n)_{n\in \Z}\in K^{\Z}$ of degree $k$ satisfying a linear recurrence equation
\[
	A_n=c_1 A_{n-1}+ \dots + c_kA_{n-k},\quad \text{ with }c_1,\dots,c_k\in K \text{ and }c_k\neq 0,
\]
and we write $P_{\underline{A}}$ for the characteristic polynomial of $\underline{A}$
\[
	P_{\underline{A}}:=X^k-c_1X^{k-1}-\cdots -c_k \in K[X].
\]
We call  roots of the characteristic polynomial $P_{\underline{A}}$ of a linear recurrence sequence $\underline{A}$ \emph{the roots of $\underline{A}$}. We write $\overline{\Q}$ for the algebraic closure of $\Q$ in $\C$ and view the roots of $P_{\underline{A}}$ as complex (algebraic) numbers. We call $\underline{A}$ \emph{non-degenerate} if for any roots $\alpha$ and $\beta$ of ${\underline{A}}$ we have
\[
	\frac{\alpha}{\beta} \text{ is a root of unity} \Leftrightarrow \alpha=\beta.
\]
Given a degenerate sequence $\underline{A}$, we can always find a positive integer $d$ such that all the subsequences
\[
	(A_{dn})_n, (A_{dn+1})_n,\dots, (A_{dn+(d-1)})_n
\]
are non-degenerate and the question of whether the sequence of period lengths $(\ell(A_n))_n$ is bounded can be decided by looking at these finitely many subsequences,  hence the assumption that $\underline{A}$ is non-degenerate is not restrictive.
To state the main theorems, we have to introduce some notation:
\begin{definition}
A polynomial $P\in K[X]$ is called \emph{unital} if all of its roots are units (i.e. units in the ring of integers of some finite extension). A linear recurrence sequence $\underline{A}$ is called unital if its characteristic polynomial $P_{\underline{A}}$ is unital.
\end{definition}

\begin{definition}
	We call a monic polynomial $P\in K[X]$ \emph{Pisot} if for every normalized irreducible factor  $\pi\in K[X]$ of $P$ we have $\pi\neq \pi'$ and one of the following conditions holds:
	\begin{enumerate}
	\item all roots of $\pi$ in $\C$ have absolute value $<1$ and all roots of $\pi'$ in $\C$ have absolute value $>1$.
	\item all roots of $\pi$ in $\C$ have absolute value $>1$ and all roots of $\pi'$ in $\C$ have absolute value $<1$.
	\end{enumerate}
	We call $P$ \emph{unital Pisot} if $P$ is unital and Pisot. A linear recurrence sequence $(A_n)_n$ with values in $K$ is called \emph{unital Pisot} if its characteristic polynomial is unital Pisot.
\end{definition}


We can now state the main theorem:
\begin{theorem}\label{thm:A}
Let $K$ be a real quadratic field and $(A_n)_{n\in\Z}$ a non-degenerate linear recurrence in $K$. 
If the sequence $(\ell(A_n))_n$ of period lengths is bounded, then $(A_n)_n$ belongs to one of the following three disjoint classes of linear recurrence sequences:
\begin{enumerate}
\item $(A_n)_n$ with $A_n\in \Q$ for all $n\in \Z$, 
\item $(A_n)_n$ a linear recurrence sequence such that $(A_n-A_n')_n$ is non-zero and essentially constant, i.e. $A_n-A_n'=(\pm 1)^n\beta$ for some $\beta\in K\setminus \Q$, and the linear recurrence sequence $(A_n+A_n')_n$ of rational numbers has bounded denominators.
\item $(A_n)_n$ a linear reccurence sequence with $(A_n-A_n')_n$ a non-zero linear recurrence sequence which is unital Pisot, and $(A_n+A_n')_n$ a linear recurrence in $\Q$ such that all roots of $(A_n+A_n')_n$ with absolute value $\geq 1$ are algebraic integers while all roots of absolute value $\leq 1$ are reciprocals of algebraic integers.
\end{enumerate} 
\end{theorem}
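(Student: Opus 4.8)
\emph{Overall strategy.} The plan is to run the Corvaja--Zannier argument: convert ``bounded period length'' into a family of abnormally good rational approximations to the $A_n$, apply the Schmidt Subspace Theorem (in its $S$-unit form for power sums), and read off the resulting degeneracy as one of the three classes. The first move is to normalize: set $t_n:=A_n+A_n'\in\Q$, fix $\omega\in K$ with $\omega'=-\omega$, and write $A_n-A_n'=\delta_n\omega$ with $\delta_n\in\Q$. Then $(t_n)_n$ and $(\delta_n)_n$ are linear recurrences over $\Q$ whose characteristic roots are among the roots of $\underline A$ and their Galois conjugates; after replacing $(A_n)_n$ by the finitely many non-degenerate subsequences from the introduction, we may assume $(t_n)_n$ and $(\delta_n)_n$ are non-degenerate. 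If $\delta_n\equiv 0$ we are in the first class; so assume $\delta_n\not\equiv 0$, so that $A_n$ is a genuine quadratic irrational for all large $n$.

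\emph{Period length versus approximation.} Next I would invoke the classical dictionary between continued fractions and cycles of reduced indefinite binary quadratic forms. Writing $A_n$ (after an element of $\mathrm{GL}_2(\Z)$) as a reduced quadratic irrational, $\ell(A_n)\le L$ means $A_n$ lies in a cycle of length $\le L$, so the fundamental automorph of the corresponding form, i.e.\ the fundamental unit $\varepsilon_n$ of the order $\mathcal O_n$ attached to $A_n$, equals a product $\prod_i\tilde A_n^{(i)}$ of at most $L$ reduced complete quotients $\tilde A_n^{(i)}>1$. From this I would extract: (i) each $\tilde A_n^{(i)}<2\sqrt{D_n}$ with $D_n:=\mathrm{disc}(\mathcal O_n)$, so $\log\varepsilon_n\ll_L\log D_n$; (ii) since $\varepsilon_n\ge\sqrt{D_n}$ always, the product of the partial quotients in the cycle is $\gg_L D_n^{1/2}$, hence some partial quotient exceeds $D_n^{1/(2L)}$ and the preceding convergent gives coprime $(p_n,q_n)$ with $|A_n-p_n/q_n|\ll q_n^{-2}D_n^{-c}$ for a fixed $c=c(L)>0$ and $q_n$ controlled by $D_n$; (iii) computing the discriminant of the minimal integral form of $A_n$ gives $D_n=d\,\delta_n^2\cdot(\text{denom of }t_n,\delta_n)^2/(\text{content})^2$, and an argument using (i), (iii) together with the fact that $\varepsilon_K$ has exponentially large multiplicative order modulo a large prime power forces the denominators of $t_n$ and $\delta_n$ to stay bounded; thus $D_n\asymp\delta_n^2$ and $\mathcal O_n$ has conductor $\mathfrak f_n\asymp|\delta_n|$ with $\mathrm{ord}_{\mathfrak f_n}(\varepsilon_K)\ll_L\log|\delta_n|$.

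\emph{The Subspace Theorem step.} Here is the core. The approximations (ii) say the quadratic-valued recurrence $(A_n)_n=(\tfrac12 t_n+\tfrac12\delta_n\omega)_n$ is, for every $n$, abnormally close to a rational with denominator controlled by $|\delta_n|$; equivalently (iii), $\varepsilon_K$ has only logarithmically small order modulo $|\delta_n|$ for all $n$, i.e.\ the recurrence $(\delta_n)_n$ divides, at indices $j_n=O(\log|\delta_n|)$, the fixed recurrence $\bigl(\,(\varepsilon_K^j-(\varepsilon_K')^j)/\omega\,\bigr)_j$. Feeding this into the $S$-unit/Subspace Theorem machinery for power sums, exactly one of two alternatives survives. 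If $(\delta_n)_n$ is bounded, then it takes finitely many rational values; being non-degenerate it is essentially constant, $\delta_n\omega=(\pm1)^n\beta$ with $\beta\in K\setminus\Q$, and a variant of the argument in (iii) forces $(t_n)_n=(A_n+A_n')_n$ to have bounded denominators --- the second class. Otherwise $|\delta_n|\to\infty$, and the Subspace analysis forces the characteristic roots of $(\delta_n)_n$ (equivalently of $(A_n-A_n')_n$) to be units with the inside/outside-the-unit-circle pairing under the $K$-automorphism, i.e.\ $(A_n-A_n')_n$ is unital Pisot; simultaneously, the ``small'' one of $A_n,A_n'$ tending to $0$ faster than any power of $n$ pins down the $p$-adic sizes of the characteristic roots of $(A_n+A_n')_n$ into the stated shape: roots of absolute value $\ge 1$ are algebraic integers, those of absolute value $\le 1$ are reciprocals of algebraic integers --- the third class. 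The three classes are disjoint because they correspond, respectively, to $\delta_n\equiv 0$, to $(\delta_n)_n$ bounded and nonzero, and to $(\delta_n)_n$ unbounded.

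\emph{Main obstacle.} Essentially all the difficulty sits in the last step: choosing the correct linear forms and absolute values so that the Subspace Theorem applies to $(A_n)_n$ with the approximations (ii) (equivalently, to the divisibility of power sums coming from (iii)); ruling out the intermediate regime where the approximation is good yet $(\delta_n)_n$ is neither bounded nor does the small conjugate tend to $0$, which requires quantitative control of $\log\varepsilon_n$ against $\log D_n$ sharper than the crude bound (i); and converting the resulting Archimedean and $p$-adic growth dichotomies into the precise algebraic-integrality conditions (``unital Pisot'', ``bounded denominators'', ``reciprocals of algebraic integers''), which amounts to a careful bookkeeping of all absolute values of all characteristic roots together with the fact that a non-degenerate linear recurrence has bounded denominators precisely when none of its roots contributes a pole at a finite place. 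A secondary, but still delicate, point is the verification in (iii) that the denominators of $t_n$ and $\delta_n$ themselves remain bounded, which is what makes $D_n\asymp\delta_n^2$ usable throughout.
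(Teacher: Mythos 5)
Your proposal takes a genuinely different route from the paper. The paper never passes through binary quadratic forms, regulators, or conductors of orders; instead it proves a direct Diophantine criterion (Proposition~\ref{prop1p}): if $\log a_i(A_n)/n\to 0$ for all $i\ge 1$ and $|A_n-A_n'|$ (or a suitable $v$-adic variant) grows exponentially, the period is unbounded. The slow growth of the partial quotients is then obtained by contradiction via the Kulkarni--Mavraki--Nguyen extension of Corvaja--Zannier (Lemma~\ref{lemma2}), applied to the power-sum representation of $A_n$; the Archimedean and non-Archimedean growth comes from the Evertse/van der Poorten--Schlickewei estimate (Theorem~\ref{EPS}), with the $p$-adic version supplied in the appendix. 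The case split is also different: the paper decomposes $\underline A=\underline A^{\Q}+\underline A^{K}$ according to whether the irreducible factors of the characteristic polynomial satisfy $\pi=\pi'$ or not, not into trace and $\sqrt d$-part as you do. What your route could buy, if it worked, is a direct link between the period length and the index $[\mathcal O_K^\times:\mathcal O_n^\times]$, which is arithmetically appealing; but it commits you to proving a \emph{quantitative} bound on $\mathrm{ord}_{\mathfrak f_n}(\varepsilon_K)$ for the conductors that actually arise, a problem the paper sidesteps entirely.

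There are, however, genuine gaps. First and most importantly, the ``Subspace Theorem step'' is not an argument: the sentence ``Feeding this into the $S$-unit/Subspace Theorem machinery for power sums, exactly one of two alternatives survives'' hides precisely the content of the theorem. In the paper this is where Lemma~\ref{lemma2} does real work, by applying Theorem~\ref{kmn} to the sum $\sum_{|\alpha_i|\ge 1}q_{m-1}(A_n)p_{\alpha_i}(n)\alpha_i^n$ and carefully distinguishing the cases $P=P'$, $P\ne P'$ with $P,P'\mid P_{\underline A}$, and $P\ne P'$ with $P'\nmid P_{\underline A}$; none of that structure appears in your sketch. Second, step (iii) is internally inconsistent with your stated conclusion: you assert that bounded period ``forces the denominators of $t_n$ and $\delta_n$ to stay bounded,'' which (via Corollary~\ref{cor:nonArch-bounded-units}) would force \emph{all} roots of $(A_n+A_n')_n$ to be units, a strictly stronger condition than your quoted case (c), which permits non-unit algebraic integers of modulus $\ge 1$ and reciprocals of algebraic integers of modulus $\le 1$. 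Either that step is wrong, or you have proved a strengthening that you then fail to state; in either case the argument as written does not establish the theorem as formulated, and the transition from ``$|\delta_n|\to\infty$'' to the precise integrality conditions on $(A_n+A_n')_n$ via ``the small conjugate tends to $0$'' is an Archimedean observation and does not by itself control $p$-adic absolute values of the roots, which is exactly what the paper's use of the non-Archimedean case of Theorem~\ref{EPS} is for.

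A smaller but real issue: in (ii) you produce a convergent $p/q$ with $|A_n-p/q|\ll q^{-2}D_n^{-c}$, but the quality of this approximation depends on where in the cycle the large partial quotient sits and on $q$ relative to $D_n$, and you never make the inequality quantitative enough to verify the hypothesis of any version of the Subspace Theorem. By contrast the paper's Proposition~\ref{prop1p} avoids this entirely: it does not need a single good approximation, it needs only that the coefficients of the quadratic relation \eqref{eq:polynomial-of-xn} are $e^{o(n)}$ and then compares with the exponential growth of $x_n\pm x_n'$.
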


\begin{remark}
Let us make a few observations about sequences of type $(a)$, $(b)$ and $(c)$:
\begin{enumerate}
    \item Each of the three disjoint classes contains sequences of bounded period length. Indeed, it is not difficult to see that all sequences of the form $(a)$ or $(b)$ have bounded period lengths. A simple example of a sequence of type $(c)$ is given by $A_n=\alpha^n$ with $\alpha$ a fundamental unit in the real quadratic field $K$. It has been shown by Corvaja--Zannier that $(A_n)_n=(\alpha^n)_n$ has bounded period length. We provide further examples of sequences of type $(c)$ with bounded period length in section \ref{sec:unital-pisot}. However, we do not expect that all sequences of type $(c)$ have bounded period lengths. A systematic investigation of period lengths of sequences of type $(c)$ might be an interesting topic for future research.
    \item The condition on the rational part of sequences of type $(b)$ can be read of easily from the characteristic polynomial of $(A_n+A_n')_n$: $(A_n+A_n')_n$ has bounded denominators if and only if its characteristic polynomial is unital, see Corollary \ref{cor:nonArch-bounded-units}. 
\end{enumerate}
\end{remark}

The proof of this Theorem in the case $(A_n)_n=(\alpha^n)_n$ for $\alpha$ an element of the real quadratic field $K$ has been given by Corvaja--Zannier in \cite{corv}. We generalize their strategy to the case of arbitrary linear recurrence sequences. A key ingredient of their proof is a result on rational approximations of powers of an algebraic number based on Schmidt's Subspace Theorem, which has been generalized by Kulkarni--Mavraki--Nguyen to several variables, see \cite{kul}. In our approach, Schmidt's Subspace Theorem and the theory of $S$-unit equations play also an important role at several places, e.g. in the result of Kulkarni--Mavraki--Nguyen and in estimating the growth of linear recurrence sequences. An important difference to the case of Corvaja--Zannier is that we have to take into account the non-Archimedean absolute values of the linear recurrence sequences to get the full strength of Theorem \ref{thm:A}. 
For Archimedean absolute values the corresponding statement is a celebrated Theorem of Evertse and van der Poorten--Schlickewei, see \cite{Evertse,vdPSchlick}.
The corresponding result for non-Archimedean absolute values seems to be well-known to the experts, however we couldn't find any reference in the literature. In the appendix, we deduce this Theorem from a more general result of Evertse on $S$-unit equations.

\section{Continued fraction and period lengths}
We recall some basic facts about continued fractions. For an irrational real number $\alpha$, we will denote by
$
\alpha=[a_0(\alpha),a_1(\alpha),\ldots,a_n(\alpha),\ldots,]
$
the continued fraction expansion of $\alpha$, where $a_0(\alpha)\in \Z$ and $a_1(\alpha),a_2(\alpha),\ldots$ are positive integers.
We also write $\frac{p_n(\alpha)}{q_n(\alpha)}=[a_0(\alpha),a_1(\alpha),\ldots,a_n(\alpha)]$ be the $n$th convergent for $\alpha$ and it satisfies the inequality 
\begin{equation}\label{eq1.1}
\left|\alpha-\frac{p_n(\alpha)}{q_n(\alpha)}\right|\leq \frac{1}{a_{n+1}(\alpha)q^2_n(\alpha)}
\end{equation} 
for all $n\geq 1$. If $\alpha$ is clear from the context we will drop it from the notation and we will simply write $a_n$ and $p_n/q_n$ for the partial quotients and convergents of $\alpha$. 
\begin{definition}
 Let $\alpha\in \R$ be a quadratic irrational number and let $\alpha'$ be its conjugates.   We say $\alpha$ is  reduced if $\alpha>1$ and $-1<\alpha'<0$.  
\end{definition}
 It is known that continued fraction of an irrational real number $\alpha$ is eventually periodic if and only if it is quadratic irrational, and it is purely periodic if and only if $\alpha$ is reduced. 
\smallskip

We need the following basic lemma of continued fraction expansion (for the proof see \cite[Lemma 2.30]{BPSZ}).
\begin{lemma}\label{lemma1}
    Let $\alpha=[a_0;a_1,a_2,a_3,\cdots]>1$ and $\frac{p_n}{q_n}$ be the $n$th convergent. Then for all $n\geq 1$
    \begin{enumerate}
\item[(a)]  $a_1\cdots a_n \leq q_n\leq F_n a_1\cdots a_n$
\item[(b)] $a_0 a_1\cdots a_n \leq p_n\leq F_{n+1} a_0 a_1\cdots a_n$,
\end{enumerate}
where $F_n$ denotes the $n$th term of Fibonacci sequence, i.e $F_0=0,F_1=1$ and $F_{n+1}=F_n+F_{n-1}$ for $n\geq 1$.
\end{lemma}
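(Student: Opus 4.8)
The plan is to run everything off the fundamental recurrences for the convergents. Recall that $p_{-1}=1$, $q_{-1}=0$, $p_0=a_0$, $q_0=1$, and for $n\ge 1$
\[
  p_n=a_np_{n-1}+p_{n-2},\qquad q_n=a_nq_{n-1}+q_{n-2},
\]
where $a_0\ge 1$ since $\alpha>1$ and $a_1,a_2,\dots\ge 1$; in particular all the $p_n$ and $q_n$ are positive integers. I would treat (a) and (b) in parallel, the argument for $p_n$ being the same as that for $q_n$ after replacing the base value $q_0=1$ by $p_0=a_0$.

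For the lower bounds one simply discards the non-negative tail term: $q_n=a_nq_{n-1}+q_{n-2}\ge a_nq_{n-1}$, and iterating this down to $q_0=1$ gives $q_n\ge a_1\cdots a_n$; likewise $p_n\ge a_n\cdots a_1\,p_0=a_0a_1\cdots a_n$. This is a one-line telescoping induction.

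For the upper bounds I would normalise. Set $Q_n:=q_n/(a_1\cdots a_n)$ and $P_n:=p_n/(a_0a_1\cdots a_n)$. Dividing the recurrence for $q_n$ by $a_1\cdots a_n$ gives
\[
  Q_n=Q_{n-1}+\frac{1}{a_{n-1}a_n}\,Q_{n-2}\le Q_{n-1}+Q_{n-2},
\]
because $a_{n-1},a_n\ge 1$, and the identical manipulation applies to $P_n$. Hence $(Q_n)$ and $(P_n)$ are dominated term-by-term by sequences obeying the Fibonacci recurrence with the same (small, explicitly bounded) initial data, and a straightforward induction comparing with the Fibonacci sequence — using that it is non-decreasing to absorb the starting values — yields upper bounds of exactly the shape claimed in (a) and (b), the precise Fibonacci index being whatever the base cases $Q_0,Q_1$ (resp.\ $P_0,P_1$) force.

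There is no genuine obstacle here: the statement is elementary and this is the standard textbook argument, as recorded in \cite{BPSZ}. The only point demanding a little care is the Fibonacci index/base-case bookkeeping in the upper-bound induction; once that is pinned down, both inductions close immediately, and the lower bounds are essentially immediate from positivity of the recurrence.
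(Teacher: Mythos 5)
The paper does not actually prove this lemma: it simply cites \cite[Lemma~2.30]{BPSZ}, so there is no in-paper argument to compare against. Your proposal is the standard textbook argument and is correct in substance: the lower bounds $q_n\ge a_1\cdots a_n$ and $p_n\ge a_0a_1\cdots a_n$ follow by telescoping $q_n=a_nq_{n-1}+q_{n-2}\ge a_nq_{n-1}$ (resp.\ for $p_n$) down to $q_0=1$, $p_0=a_0$, and the normalisation $Q_n=q_n/(a_1\cdots a_n)$, $P_n=p_n/(a_0a_1\cdots a_n)$ turns the convergent recursion into $Q_n\le Q_{n-1}+Q_{n-2}$, $P_n\le P_{n-1}+P_{n-2}$, which a monotonicity-in-Fibonacci induction then closes.

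The one point you hedged on — ``the precise Fibonacci index being whatever the base cases force'' — is in fact a real issue, and you were right to flag it. With the initial data $Q_0=Q_1=1$ the induction gives $Q_n\le F_{n+1}$, and with $P_0=1$, $P_1=1+\tfrac{1}{a_0a_1}\le 2$ it gives $P_n\le F_{n+2}$ (using the convention $F_0=0$, $F_1=F_2=1$, $F_3=2,\dots$ stated in the lemma). So the argument yields
\[
 q_n\le F_{n+1}\,a_1\cdots a_n,\qquad p_n\le F_{n+2}\,a_0a_1\cdots a_n,
\]
whereas the lemma as printed asserts $q_n\le F_n\,a_1\cdots a_n$ and $p_n\le F_{n+1}\,a_0a_1\cdots a_n$. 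The printed bounds are simply false at small $n$: for $n=2$, $q_2=a_1a_2+1> a_1a_2=F_2\,a_1a_2$, and for $n=1$, $p_1=a_0a_1+1>a_0a_1=F_2\,a_0a_1$. So the statement as reproduced in the paper has an off-by-one in the Fibonacci index (presumably a transcription slip from \cite{BPSZ}). This does not affect anything downstream, since the lemma is only invoked to deduce that $\log p_j(x_n)$, $\log q_j(x_n)$ are $o(n)$, for which any fixed Fibonacci bound suffices; but a complete write-up of your argument should state the corrected indices rather than the ones in the lemma.
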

The following lemma says that multiplying a given sequence of quadratic real numbers with a fixed positive integer does not affect the boundedness of the period length.
\begin{lemma}\label{lem:periodlength-mult-by-int}
\label{prop:integer}
    Let $K$ be a real quadratic field and $(A_n)$ be a sequence in $K$. Let $D$ be a positive integer.  Then the period length of $A_n$ is bounded if and only if the period length of $DA_n$ is bounded.
\end{lemma}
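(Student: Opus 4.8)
The plan is to prove the quantitative statement that there is a constant $C_D\ge 1$, depending only on $D$, such that $\ell(D\alpha)\le C_D\,\ell(\alpha)$ and $\ell(\alpha)\le C_D\,\ell(D\alpha)$ for every quadratic irrational $\alpha\in K$; applying these two inequalities to $\alpha=A_n$ then settles the lemma (if $A_n\in\Q$ then also $DA_n\in\Q$, and both period lengths are $0$). Both inequalities are instances of one statement: the continued fraction expansions of $x$ and of $Dx$ are related by the M\"obius transformations attached to the integer matrices $\left(\begin{smallmatrix}D&0\\0&1\end{smallmatrix}\right)$ and $\left(\begin{smallmatrix}1&0\\0&D\end{smallmatrix}\right)$ (namely $x\mapsto Dx$ and $x\mapsto x/D$), so it suffices to show that for $N_0$ one of these two fixed matrices, with $|\det N_0|=D$, and any quadratic irrational $\alpha$, the period of the eventually periodic continued fraction of $N_0\alpha$ is at most $C_D$ times the period of that of $\alpha$.

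For this I would run the classical finite-state algorithm (Gosper's algorithm; cf. Raney's work on continued fractions and finite automata, and Liardet--Stambul) that computes the continued fraction of $N_0\alpha$ from that of $\alpha$. One maintains an integer matrix $N=(n_{ij})$ with $|\det N|=D$, initialised at $N_0$, which represents the map $t\mapsto \frac{n_{11}t+n_{12}}{n_{21}t+n_{22}}$, and alternates two operations applied to the current complete quotient $\alpha_i\in(1,\infty)$ of $\alpha$: an \emph{emission}, possible when the images of the endpoints $1$ and $\infty$ under $N$ have the same integer part $c$, which outputs the partial quotient $c$ and replaces $N$ by $\left(\begin{smallmatrix}0&1\\1&-c\end{smallmatrix}\right)N$; and an \emph{absorption}, which reads the next partial quotient $a$ of $\alpha$ and replaces $N$ by $N\left(\begin{smallmatrix}a&1\\1&0\end{smallmatrix}\right)$. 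The output string is exactly the continued fraction of $N_0\alpha$. The two facts from the theory that I would invoke are: (i) the number of output terms produced between any two consecutive absorptions is $O_D(1)$; and (ii) outside the short, $O_D(1)$-long ``Euclidean runs'' triggered by absorbing a large partial quotient, the matrix $N$ lies in a finite set $\mathcal N_D$ depending only on $D$ --- the boundedness lemma at the heart of Gosper's algorithm.

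To conclude, feed the eventually periodic continued fraction of $\alpha$, of period $\ell:=\ell(\alpha)$, into the algorithm. Once the pre-period has been absorbed, the ``state'' consisting of $(N,\text{position of the read head within the period of }\alpha)$ takes, at the bounded-gap set of times at which $N\in\mathcal N_D$, at most $|\mathcal N_D|\cdot\ell$ distinct values; hence the sequence of these states is eventually periodic of period $\le|\mathcal N_D|\cdot\ell$, and therefore, using (i), so is the output, of period $\le C_D\,\ell$. This gives $\ell(N_0\alpha)\le C_D\,\ell(\alpha)$, as required. The main obstacle is fact (ii): establishing cleanly that a homographic transformation of bounded determinant is computed by a finite transducer whose size is controlled by $D$ (equivalently, that the auxiliary matrices stay within a finite set depending only on $D$). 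This is standard but slightly technical; in a self-contained treatment one would either cite it or prove the needed bound directly, by tracking how an emission following a long absorption performs a single step of the Euclidean algorithm on a pair of integers bounded in terms of $D$, so that after $O_D(1)$ such steps one returns to a matrix of bounded height.
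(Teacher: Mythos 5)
Your conclusion is correct, but you take a genuinely different route from the paper. The paper's proof is essentially a one-liner: it cites Theorem~1 of Rada and Starosta \cite{rada}, which gives, for any nonsingular $N\in M_2(\Z)$, a constant $S_N$ with $S_N^{-1}\,\ell(x)\le\ell(h_N(x))\le S_N\,\ell(x)$, and applies it with $N=\bigl(\begin{smallmatrix}D&0\\0&1\end{smallmatrix}\bigr)$. You instead sketch a self-contained proof of that quantitative bound via Gosper's algorithm and the Raney/Liardet--Stambul finite-transducer theory. Your reduction (one-sided bounds for the two matrices of determinant $\pm D$, combined) is sound, and so is the periodicity argument: once the pre-period of $\alpha$ is consumed, the pair (auxiliary matrix in the bounded set $\mathcal N_D$, read-head position mod $\ell(\alpha)$) lives in a set of size $|\mathcal N_D|\,\ell(\alpha)$ and evolves deterministically, so the sequence of states observed at your ``good times'' is eventually periodic of period at most $|\mathcal N_D|\,\ell(\alpha)$; your fact~(i) bounds the number of output partial quotients per step, and since the minimal period of an eventually periodic sequence divides any eventual period, $\ell(D\alpha)\le C_D\,\ell(\alpha)$ follows. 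The trade-off is clear: the paper buys brevity by outsourcing precisely the technical crux --- your fact~(ii), that away from the short Euclidean runs the auxiliary matrix stays in a finite set controlled only by $D$ --- to the reference \cite{rada}, while your version is more illuminating and self-contained but would have to actually establish fact~(ii), e.g.\ by the Euclidean-step argument you indicate or by citing Raney's transducer theorem. You are right to identify that step as the nontrivial one; it is, in substance, what Rada--Starosta prove.
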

\begin{proof}
This follows from the more general fact that the period length of an irrational number under a Möbius transformation can be estimated as follows: Given a non-singular matrix $N=\begin{pmatrix}
a & b\\
c & d
\end{pmatrix}$ 
$\in M_2(\mathbb{Z})$, let $h_N:\mathbb{R}\ \backslash \ \{\frac{-d}{c}\} \to \mathbb{R}$ given by
\begin{equation*}
 h_N(x)=\frac{ax+b}{cx+d}   
\end{equation*}
be the associated M\"{o}bius transformation with the matrix $N$. 

By Theorem 1 in \cite{rada}, we have 
\begin{equation}\label{eq4}
\frac{1}{S_N} \ \ell(x)\leq \ell(h_N(x)) \leq S_N \  \ell(x) \ ,
\end{equation}
where $S_N$ is some fixed positive integer which depends only on the matrix $N$.  Applying this for $N=\begin{pmatrix}
D & 0\\
0 & 1
\end{pmatrix}$ proves the Lemma.
 
\end{proof}

The following proposition gives a criterion for a sequence of real quadratic irrationals having unbounded period length. 
\begin{prop}\label{prop1p}
Let $K$ be a real quadratic field and $(A_n)_{n\geq 0}$ be a  sequence of elements of $K$ with continued fraction expansion $A_n=[a_0(A_n); a_1(A_n),a_2(A_n),\ldots]$, which satisfies
	  \begin{equation}\label{eq:Condition_ai}
        \displaystyle\lim_{n\to\infty}\frac{\log a_i(A_n)}{n}=0
    	\end{equation}
        for all $i\neq 0$.
 Suppose furthermore that one of the following two conditions hold:
\begin{enumerate}
    \item We have
    \begin{equation}\label{eq:ConditionAnp-Arch}
		\lim_{n\to \infty} \frac{\log|A_n-A'_n|}{n}>0.
	\end{equation}
	\item There exists a non-Archimedean place $v$ such that 
	\begin{equation}\label{eq:ConditionAnp-nonArch}
		\lim_{n\to \infty} \frac{\log|A_n-A'_n|_\mathit{v}}{n}>0, \quad \text{ or } \quad\lim_{n\to \infty} \frac{\log|A_n|_\mathit{v}}{n}>0,
	\end{equation}
    and 
        \begin{equation}\label{eq:ConditionAnReduced-eps}
        |A_n-A'_n|\geq 2,
        \end{equation}
     for all $n$ sufficiently large.
\end{enumerate}
Then the sequence  $(\ell(A_n))_{n\geq 0}$ of period lengths is unbounded.
\end{prop}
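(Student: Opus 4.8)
The plan is to argue by contradiction. Assume $\ell(A_n)\le L$ for all $n$ and some fixed $L$; note that under either of the two hypotheses $A_n$ is a genuine quadratic irrational for all large $n$ (if $A_n\in\Q$ then $|A_n-A_n'|=0$, incompatible with \eqref{eq:ConditionAnReduced-eps} in case (b) and with \eqref{eq:ConditionAnp-Arch} in case (a)). I would first normalize by replacing each $A_n$ by its fractional part $\{A_n\}=A_n-\lfloor A_n\rfloor\in(0,1)$, so that $a_0(A_n)=0$. This leaves $\ell(A_n)$ and all $a_i(A_n)$ with $i\ge 1$ unchanged (so \eqref{eq:Condition_ai} is preserved), leaves $A_n-A_n'$ unchanged (so \eqref{eq:ConditionAnp-Arch}, \eqref{eq:ConditionAnReduced-eps} and the first alternative of \eqref{eq:ConditionAnp-nonArch} are preserved), and — since $\lfloor A_n\rfloor\in\Z$ has $|\lfloor A_n\rfloor|_v\le 1$ — leaves $|A_n|_v$ unchanged at every $n$ with $|A_n|_v>1$, which is all large $n$ under the second alternative of \eqref{eq:ConditionAnp-nonArch}. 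So we may assume $A_n\in(0,1)$.

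The crucial step is to bound the pre-period of the continued fraction of $A_n$ by an absolute constant. In both cases $|A_n-A_n'|\ge 2$ for all large $n$: in case (b) this is \eqref{eq:ConditionAnReduced-eps}, while in case (a) it follows from \eqref{eq:ConditionAnp-Arch}, which forces $|A_n-A_n'|\to\infty$. Since $A_n\in(0,1)$, the conjugate then satisfies $A_n'<-1$ or $A_n'>2$. Running through the first few complete quotients $\alpha_0=A_n,\ \alpha_1=1/A_n,\ \alpha_2,\ \alpha_3$, and using that $\alpha_j$ is reduced as soon as $\alpha_j>1$ and $-1<\alpha_j'<0$, and that reducedness then propagates to $\alpha_{j+1}$, one checks in each of the finitely many subcases that $\alpha_h$ is reduced for some $h\le 3$. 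Since the continued fraction of $\alpha_h$ is purely periodic, the continued fraction of $A_n$ is $A_n=[0;a_1,\dots,a_{h-1},\overline{a_h,\dots,a_{h+\ell-1}}]$ with $h\le 3$ and $\ell=\ell(A_n)\le L$.

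Next I would bound the arithmetic complexity of $A_n$. The reduced tail $\alpha_h=[\overline{a_h,\dots,a_{h+\ell-1}}]$ satisfies $\kappa_{\ell-1}\alpha_h^2+(\kappa_{\ell-2}-\pi_{\ell-1})\alpha_h-\pi_{\ell-2}=0$, where $\pi_j/\kappa_j$ are the convergents of $\alpha_h$; by Lemma~\ref{lemma1} these integer coefficients are at most $F_L\prod_{i=h}^{h+\ell-1}a_i(A_n)$. Substituting the relation $A_n=(p_{h-1}\alpha_h+p_{h-2})/(q_{h-1}\alpha_h+q_{h-2})$, with $p_j/q_j$ the convergents of $A_n$ and every $p_j,q_j$ for $j\le h-1\le 2$ at most $a_1(A_n)a_2(A_n)+1$, and clearing denominators, one finds that $A_n$ and $A_n'$ are the two roots of a primitive integral quadratic $q_nX^2-t_nX-s_n$ with $q_n\ge 1$ and $\max(|q_n|,|t_n|,|s_n|)\le C_L\prod_{i=1}^{L+2}a_i(A_n)^3$, $C_L$ depending only on $L$. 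Since the index range is bounded and $\log a_i(A_n)=o(n)$ for each fixed $i$ by \eqref{eq:Condition_ai}, this yields $\max(|q_n|,|t_n|,|s_n|)=e^{o(n)}$; in particular $\log q_n=o(n)$.

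Finally the contradiction. In case (a), $(A_n-A_n')^2=(t_n^2+4q_ns_n)/q_n^2$, so $|A_n-A_n'|^2\le |t_n|^2+4|q_n||s_n|\le e^{o(n)}$, contradicting \eqref{eq:ConditionAnp-Arch}, which forces $|A_n-A_n'|\ge e^{\delta n}$ for some $\delta>0$ and all large $n$. In case (b), let $v$ lie above the rational prime $p$; since $q_n\in\Z_{\ge1}$, the product formula gives $|q_n|_v\ge q_n^{-[K:\Q]}=e^{-o(n)}$, while $t_n^2+4q_ns_n$ is a nonzero rational integer, so $|t_n^2+4q_ns_n|_v\le 1$; hence $|A_n-A_n'|_v^2=|t_n^2+4q_ns_n|_v/|q_n|_v^2\le e^{o(n)}$, and similarly $|A_n|_v=|(t_n\pm\sqrt{t_n^2+4q_ns_n})/(2q_n)|_v\le e^{o(n)}$ (using that $|2|_v>0$ is fixed), each time contradicting \eqref{eq:ConditionAnp-nonArch}. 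The step I expect to be the main obstacle is the pre-period bound of the second paragraph: if the pre-period were allowed to grow with $n$, the quadratic satisfied by $A_n$ could only be controlled by a product of order-$n$ partial quotients and would thus be exponentially large, collapsing the argument. This is exactly why hypothesis \eqref{eq:ConditionAnReduced-eps} is imposed in case (b) (it is automatic in case (a)), and why one must first pass to the fractional part so that $a_0(A_n)$ does not enter the height estimate.
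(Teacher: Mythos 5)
Your proof is correct and follows the same overall strategy as the paper's: use the hypothesis $|A_n-A_n'|\ge 2$ to control the pre-period, use the bounded period together with \eqref{eq:Condition_ai} and Lemma~\ref{lemma1} to show that $A_n$ satisfies an integral quadratic whose coefficients are $e^{o(n)}$, and then read off a contradiction with the growth hypotheses. The differences are only in execution. For the normalization, you pass to the fractional part and then verify by a short case analysis that the pre-period is at most $3$; the paper instead translates by $\lceil A_n'\rceil$ (after arranging $A_n>A_n'$), which lands directly on a reduced number with pre-period $0$ and a purely periodic expansion — the paper then exploits pure periodicity to identify $a_0(x_n)$ with $a_h(A_n)$ so that \eqref{eq:Condition_ai} covers the zeroth partial quotient too, a small subtlety your normalization sidesteps entirely since $a_0=0$. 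For the non-Archimedean contradiction, you argue directly from the quadratic formula, bounding $|A_n-A_n'|_v$ and $|A_n|_v$ via $|D_n|_v\le 1$ and $|q_n|_v^{-1}=e^{o(n)}$; the paper instead shows that $|x_n|_v>1>|x_n'|_v$ for large $n$ and uses the ultrametric equality $|x_n+x_n'|_v=|x_n|_v$ to contradict the $o(n)$ bound on the linear coefficient. Both are correct; your version is a bit more hands-on with the explicit discriminant, while the paper's keeps the argument symmetric in the two conjugate roots.
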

\begin{proof}
After replacing $(A_n)_n$ by $(A_{n+N})_{n\geq 0}$ with $N$ sufficiently large, we may assume $|A_n-A'_n|\geq 2$ for all $n\geq 0$. This is part of condition $(b)$, while it is a consequence of  $\displaystyle\lim_{n\to \infty} \frac{\log|A_n-A'_n|}{n}>0$ in the case of condition $(a)$. Since neither conjugation nor multiplication by $(-1)$ in $K$, change the period length, we may  assume $A_n>A_n'$ for all $n\geq 0$. Let $k_n=\lceil A'_n\rceil$. Then  $x_n:=A_n-k_n>1$ and $-1<x_n'<0$ for all $n\geq 0$. Therefore, $x_n$ is reduced. We prove the Proposition by contradiction. Suppose the period length was bounded. We can find an integer $h$ such that the continued fraction expansion of $x_n$ is given for any $n\geq 0$ by, 
$$
x_n=[\overline{a_0(x_n);a_1(x_n),a_2(x_n),...,a_{h-1}(x_n)}].
$$
By assumption \eqref{eq:Condition_ai}, we have 
\begin{equation}\label{eq3.2}
\displaystyle\lim_{n\to\infty}\frac{\log a_i(x_n)}{n}=0
\end{equation}
for all $i\geq 0$.   
The algebraic numbers $x=x_n$ and $x=x_n'$ satisfy the following algebraic relation
\begin{align*}
x&=[a_0(x_n);a_1(x_n),a_2(x_n),\ldots,a_{h-1}(x_n),x]\\
&=\frac{p_{h-1}(x_n)x+p_{h-2}(x_n)}{q_{h-1}(x_n)x+q_{h-2}(x_n)}.
\end{align*}
Hence $x_n$ and $x_n'$ are the roots of the polynomial
\begin{equation}\label{eq:polynomial-of-xn}
X^2+\frac{q_{h-2}(x_n)-p_{h-1}(x_n)}{q_{h-1}(x_n)}X-\frac{p_{h-2}(x_n)}{q_{h-1}(x_n)}=0.
\end{equation}
By Lemma \ref{lemma1} and \eqref{eq3.2}, we deduce that 
\begin{equation}\label{eq3.3}
\displaystyle\lim_{n\to\infty}\frac{\log q_j(x_n)}{n}=0\quad \mbox{and} \displaystyle\lim_{n\to\infty}\frac{\log p_j(x_n)}{n}=0
\end{equation}
for all $j=0,1,\ldots,h-1$, and hence, the coefficients of this polynomial have logarithms that are o(n) for $n\to \infty$. 
\smallskip

\textit{(a): Suppose that hypothesis (a) holds}: The assumption
\begin{equation*}
	\lim_{n\to\infty}\frac{\log|x_n-x_n'|}{n}=\lim_{n\to\infty}\frac{\log|A_n-A_n'|}{n}>0,
\end{equation*}
together with the inequalities $x_n>1$ and $0<|x_n'|<1$ implies
\begin{equation*}
	\lim_{n\to\infty}\frac{\log|x_n|}{n}>0.
\end{equation*}
Using once more the inequality $0<|x_n'|<1$, we deduce
\begin{equation*}
	\lim_{n\to\infty}\frac{\log|x_n+x_n'|}{n}>0,
\end{equation*}
which contradicts the fact that the logarithm of the absolute value of the linear coefficient of \eqref{eq:polynomial-of-xn} is $o(n)$.
\smallskip

\textit{(b): Suppose that hypothesis (b) holds}: Let $v$ be a non-Archimedean place as in hypothesis (b).
Let us first make the following observation. Given any sequence $\left(\frac{c_n}{d_n}\right)_n$ of rational numbers with $c_n,d_n\in \mathbb{Z}$ satisfying $\displaystyle\lim_{n\to\infty} \frac{\log |c_n|}{n}=\lim_{n\to\infty} \frac{\log |d_n|}{n}=0$, then $\displaystyle\lim_{n\to\infty} \frac{\log \left|c_n/d_n\right|_v}{n}=0$. Indeed, this follows from $1\leq \frac{1}{|c_n|_v} \leq |c_n|$ and $1\leq \frac{1}{|d_n|_v} \leq |d_n|$. If we apply this observation to the coefficients of the minimal polynomial of $x_n$, then we obtain together with \eqref{eq3.3} 
\begin{equation}\label{eq:log-growth-of-coeff}
	\lim_{n\to\infty}\frac{\log \left| \frac{q_{h-2}(x_n)-p_{h-1}(x_n)}{q_{h-1}(x_n)}\right|_v}{n}=0\quad \mbox{and} \lim_{n\to\infty}\frac{\log \left| \frac{p_{h-2}(x_n)}{q_{h-1}(x_n)} \right|_v}{n}=0.
\end{equation}
Let us first prove that we may assume
\begin{equation}\label{eq:log-xn}
	\lim_{n\to \infty} \frac{\log|x_n|_v}{n}>0,
\end{equation}
after possibly replacing $v$ by $v'$. Indeed, if we are in the case
\[
	\lim_{n\to \infty} \frac{\log|A_n|_\mathit{v}}{n}>0
\]
of hypothesis $(b)$, then \eqref{eq:log-xn} follows immediately from the non-Archimedean triangle inequality. In the other case
\begin{equation*}
		\lim_{n\to \infty} \frac{\log|A_n-A'_n|_\mathit{v}}{n}>0,
\end{equation*}
we get
\begin{equation*}
	\lim_{n\to\infty}\frac{\log|x_n-x_n'|_v}{n}=\lim_{n\to\infty}\frac{\log|A_n-A_n'|_v}{n}>0,
\end{equation*}
and we deduce
\[
	\limsup_{n\to \infty} \frac{\log|x_n|_v}{n}>0 \text{ or } \limsup_{n\to \infty} \frac{\log|x_n'|_v}{n}>0.
\]
After possibly replacing $v$ with $v'$ and passing to a suitable sub-sequence, we get \eqref{eq:log-xn}. If we have
\[
	\limsup_{n\to \infty} \frac{\log|x_nx_n'|_v}{n}>0 \text{ or } \liminf_{n\to \infty} \frac{\log|x_nx_n'|_v}{n}<0
\]
we get a contradiction with \eqref{eq:log-growth-of-coeff}. Hence, we have
\begin{equation}\label{eq:log-xn-v}
	\lim_{n\to \infty} \frac{\log|x_n'|_v}{n}=-\lim_{n\to \infty} \frac{\log|x_n|_v}{n}<0.
\end{equation}
In particular, we have $|x_n|_v>1$ and $|x_n'|_v<1$ for all sufficiently large values of $n$. The non-Archimedean triangle inequality implies that, for all sufficiently large values of $n$, the equality
\[
	\frac{\log|x_n+x_n'|_v}{n}=\frac{\log|x_n|_v}{n}
\]
holds. Hence, by \eqref{eq:log-xn-v}, we get
\[
	\lim_{n\to \infty} \frac{\log|x_n+x_n'|_v}{n}>0
\]
which again contradicts \eqref{eq:log-growth-of-coeff}.
\end{proof}
We also require the following result of Schinzel \cite[Theorem 3]{sch} on the period length of the continued fraction to a square root of an integer-valued polynomial. 
\begin{theorem}\label{schinzel}
 Let $f(X)=a_0 X^p+\cdots+a_p$ be an integer-valued polynomial such that $f(n)\geq 0$ for all $n\geq 0$. If 
 \begin{enumerate}
     \item  $p\equiv 1~(\mathrm{mod}~2)$ or 
     \item  $p\equiv 0~(\mathrm{mod}~2)$ and $a_0$ is not a square of a rational number,
 \end{enumerate}
 then the sequence $\ell(\sqrt{f(n)})$ is unbounded.
\end{theorem}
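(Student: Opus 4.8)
We only sketch a strategy; the full argument is given in \cite{sch}. Assume, for a contradiction, that $\ell(\sqrt{f(n)})\le L$ for all sufficiently large $n$. By \lemref{lem:periodlength-mult-by-int} we may assume $f\in\Z[X]$, and we note that under either hypothesis $f$ is not the square of a polynomial in $\Q[X]$ (an odd-degree polynomial is never a square, and an even-degree one whose leading coefficient is not a rational square is not a square in $\Q[X]$). Since the period is bounded for all large $n$, some value $\ell_0\in\{1,\dots,L\}$ is attained by $\ell(\sqrt{f(n)})$ on a set $\mathcal S$ of integers of positive upper density; removing the density-zero set of $n$ for which $f(n)$ is a perfect square, we may assume $\ell(\sqrt{f(n)})=\ell_0$ for every $n\in\mathcal S$. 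The plan has three steps: (1) extract from the bounded continued fraction, for each $n\in\mathcal S$, a solution of the Pellian equation $x^2-f(n)y^2=\pm1$ whose height is polynomially bounded in $n$; (2) show that this forces $f$ to agree, on a positive-density subset, with a fixed quadratic polynomial evaluated at $\lfloor\sqrt{f(n)}\rfloor$; (3) deduce a polynomial identity $4f(X)+c=H(X)^2$ contradicting the hypotheses.

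\emph{Step 1.} For $n\in\mathcal S$ write $\sqrt{f(n)}=[a_0;\overline{a_1,\dots,a_{\ell_0-1},2a_0}]$. Classically, the convergent $p_{\ell_0-1}/q_{\ell_0-1}$ of $\sqrt{f(n)}$ gives integers $x_n:=p_{\ell_0-1}$, $y_n:=q_{\ell_0-1}\ge1$ with $x_n^2-f(n)y_n^2=(-1)^{\ell_0}$, and $x_n+y_n\sqrt{f(n)}$ is the fundamental solution of the associated Pellian equation. The complete quotients of $\sqrt{f(n)}$ lie in $(1,2\sqrt{f(n)})$, so every partial quotient is $<2\sqrt{f(n)}$, and \lemref{lemma1}(a) gives, for $n$ large,
\[
1\le y_n\le F_L\bigl(2\sqrt{f(n)}\bigr)^{L}\le C\,n^{C'},
\]
with $C,C'$ depending only on $f$ and $L$; equivalently $\log\bigl(x_n+y_n\sqrt{f(n)}\bigr)=O(\log n)$ uniformly for $n\in\mathcal S$.

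\emph{Step 2 (the crux).} The fixed-point identity $\sqrt{f(n)}=\dfrac{p_{\ell_0-1}\bigl(\sqrt{f(n)}+a_0\bigr)+p_{\ell_0-2}}{q_{\ell_0-1}\bigl(\sqrt{f(n)}+a_0\bigr)+q_{\ell_0-2}}$, upon separating rational and irrational parts, yields
\[
f(n)=\Bigl(a_0+\tfrac{r_n}{q_n}\Bigr)^2-\frac{(-1)^{\ell_0}}{q_n^2},
\]
where $q_n=q_{\ell_0-1}$, $r_n=q_{\ell_0-2}$ are continuants in the bounded-length palindrome $(a_1,\dots,a_{\ell_0-1})$ and $a_0=\lfloor\sqrt{f(n)}\rfloor$. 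The crucial — and delicate — point is that the uniform bound of Step~1 forces these continued-fraction data to follow a single polynomial pattern along a positive-density subset of $\mathcal S$; in particular $q_n$ and $r_n/q_n$ become constant there, so that on that subset
\[
4f(n)+c=\bigl(2\lfloor\sqrt{f(n)}\rfloor+\beta\bigr)^2
\]
for fixed $\beta,c\in\Q$. Hence $4f(n)+c$ is a perfect square for a set of $n$ of positive upper density.

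\emph{Step 3.} A polynomial in $\Q[X]$ that takes perfect-square values on a set of integers of positive upper density must itself be the square of a polynomial in $\Q[X]$; therefore $4f(X)+c=H(X)^2$ for some $H\in\Q[X]$. Then $\deg f=2\deg H$ is even, contradicting hypothesis~(1); and in the even-degree case $4\operatorname{lc}(f)=\operatorname{lc}(H)^2$, so $\operatorname{lc}(f)=\bigl(\operatorname{lc}(H)/2\bigr)^2$ is a rational square, contradicting hypothesis~(2). This contradiction proves that $\bigl(\ell(\sqrt{f(n)})\bigr)_n$ is unbounded.

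The main obstacle is Step~2. The continued-fraction algorithm involves the non-polynomial operation $t\mapsto\lfloor t\rfloor$, and the difficulty is to show that boundedness of the period forces the integer parts $a_i$ — hence the continuants $p_{\ell_0-1},q_{\ell_0-1}$ entering the Pellian relation — to be governed by the formal expansion of $\sqrt{f(X)}$, so that the polynomial pattern persists along a positive-density subfamily and cannot be destroyed. This is carried out in \cite{sch}.
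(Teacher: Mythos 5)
The paper does not prove this result; it is quoted verbatim from Schinzel \cite[Theorem~3]{sch}, so there is no internal proof to compare your sketch against. With that caveat, here is an assessment of the sketch on its own terms.

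Steps~1 and~3 are sound. The Pellian relation $p_{\ell_0-1}^2-f(n)q_{\ell_0-1}^2=(-1)^{\ell_0}$ for the convergent at the end of the first period, the bound $a_i<2\sqrt{f(n)}$ on partial quotients of $\sqrt{D}$, and hence the polynomial height bound on $q_{\ell_0-1}$, are all classical. The identity $f(n)=\bigl(a_0+r_n/q_n\bigr)^2-(-1)^{\ell_0}/q_n^2$ with $q_n=q_{\ell_0-1}$, $r_n=q_{\ell_0-2}$ does follow from separating rational and irrational parts of the fixed-point relation together with $p_{\ell_0-1}q_{\ell_0-2}-p_{\ell_0-2}q_{\ell_0-1}=(-1)^{\ell_0}$, so that formula checks out. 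Step~3's reduction (a polynomial in $\Q[X]$ taking perfect-square values on a set of positive upper density must be a square in $\Q[X]$) is a known fact, and the final degree/leading-coefficient contradiction is correct.

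The gap is exactly where you flag it, in Step~2, but I want to stress that the specific assertion there is stronger than anything the Step~1 bound delivers. You claim the polynomial height bound forces $q_n$ and $r_n/q_n$ to \emph{become constant} on a positive-density subset of $\mathcal S$. That does not follow: the bound only gives $q_n\le Cn^{C'}$, which permits unbounded polynomial growth, and in explicit families with bounded period (e.g.\ $\sqrt{n^2+2}=[n;\overline{n,2n}]$, where the leading coefficient is a square and the theorem does not apply) one indeed has $q_{\ell_0-1}=n\to\infty$ and $r_n/q_n=1/n\to 0$, not constants. So ``bounded period $\Rightarrow$ bounded $q_n$'' is false in general, and the argument as phrased would overreach. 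What must actually be shown is something like: the integer partial quotients $a_1(n),\dots,a_{\ell_0-1}(n)$ are, on a suitable infinite subfamily, given by fixed integer-valued polynomials in $n$ (mirroring the formal continued fraction of $\sqrt{f(X)}$ over $\Q((1/X))$), and this polynomial pattern can only close up into a finite period if $4f(X)+c$ is a square in $\Q[X]$. That is a genuinely delicate step, it is not a pigeonhole/constancy statement, and as written the sketch does not establish it. Since you explicitly defer to \cite{sch} for Step~2, the sketch is acceptable as a pointer to the literature, but as a proof it has a real hole at its center, and the stated form of the ``crux'' claim would need to be corrected before one could hope to fill it.
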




\section{Linear recurrences and $S$-units}

In the following, we will collect some well-known results about linear recurrence sequences. Let $\underline{A}=(A_n)_n$ be a linear recurrence sequence in a number field $K\subseteq \overline{\Q}$. It is well-known that for each root $\alpha\in\overline{\Q}$ of $P_{\underline{A}}$ of multiplicity $m_\alpha$, there is a polynomial $p_{\alpha}(X;\underline{A})\in \overline{\Q}[X]$ of degree at most $m_\alpha-1$ such that
\[
	A_n=\sum_{\alpha \text{ root of } P_{\underline{A}}} p_{\alpha}(n;\underline{A})\alpha^n.
\]
We call this the \emph{power sum form} of the linear recurrence sequence $(A_n)_n$.
If $\underline{A}$ is clear from the context, we will simply write $p_\alpha(X)$ instead of $p_\alpha(X;\underline{A})$.
We have the following celebrated Theorem about the growth of linear recurrence sequences, see \cite{Evertse,EPSW,vdP,vdPSchlick}:
\begin{theorem}\label{EPS}
Let $\underline{A}=(A_n)_{n\geq 0}$ be a non-degenerate linear recurrence sequence in a number field $K$ and let $v$ a place of $K$. We order the roots $\alpha_1,\ldots,\alpha_k$ of the characteristic polynomial such that $|\alpha_1|_v\geq |\alpha_2|_v\geq \dots \geq |\alpha_k|_v$. Suppose that $|\alpha_1|_v>1$. Then for any $\epsilon>0$, we have
\[
	|A_n|_v\geq |\alpha_1|_v^{n(1-\epsilon)}
\]
for all sufficiently large $n$.
\end{theorem}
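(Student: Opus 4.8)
The plan is to prove this as the classical growth estimate for non-degenerate linear recurrences, following van der Poorten--Schlickewei and Evertse \cite{vdPSchlick,Evertse}, via Schmidt's Subspace Theorem in its several-places form; alternatively one may simply cite these references.

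First I would make two harmless reductions. (i) Pass to a finite extension $L/K$ containing all the roots $\alpha_1,\dots,\alpha_k$ and to a place $w$ of $L$ above $v$, chosen compatibly with the embedding $\overline{\Q}\hookrightarrow\overline{K_v}$ implicit in the meaning of $|\alpha_i|_v$; with the usual normalization of absolute values one has $|x|_w=|x|_v$ for $x\in K$, the hypothesis $|\alpha_1|_v>1$ and the non-degeneracy condition (a property of the roots alone) are unchanged, and since $A_n\in K$ the conclusion over $(L,w)$ is literally the conclusion over $(K,v)$. So I may assume $\alpha_1,\dots,\alpha_k\in K$. (ii) Write the power-sum form $A_n=\sum_{i=1}^{r}p_i(n)\alpha_i^{n}$ with $\alpha_1,\dots,\alpha_r$ the distinct roots and $0\neq p_i\in K[X]$ (note $\alpha_i\neq 0$ because $c_k\neq 0$, and $p_i(n)\neq 0$ for all $i$ and all large $n$), and fix a finite set $S$ of places of $K$ containing all archimedean places, $v$, and every place at which some $\alpha_i$ is a non-unit, so that $\alpha_1,\dots,\alpha_r\in\mathcal O_S^{*}$. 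The case $r=1$ is immediate, since then $|A_n|_v=|p_1(n)|_v|\alpha_1|_v^{n}$ and $\log|p_1(n)|_v=O(\log n)$; so I may assume $r\geq 2$.

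Then I would argue by contradiction: suppose $|A_n|_v<|\alpha_1|_v^{n(1-\epsilon)}$ for some fixed $\epsilon>0$ and all $n$ in an infinite set $\mathcal N$. For large $n\in\mathcal N$ consider the points $\mathbf x_n=(p_1(n)\alpha_1^{n},\dots,p_r(n)\alpha_r^{n})\in K^{r}\setminus\{0\}$ and apply the Subspace Theorem to the system of $r$ linearly independent linear forms in $X_1,\dots,X_r$ that is the standard coordinate system $X_1,\dots,X_r$ at every $u\in S$ with $u\neq v$, and is $X_1+\dots+X_r,\ X_2,\dots,X_r$ at $u=v$ --- the point of this choice being that the single non-coordinate form takes the value $A_n$ at $\mathbf x_n$. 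Using the product formula together with $\alpha_i\in\mathcal O_S^{*}$ (which annihilates the contribution of the factors $\alpha_i^{n}$) and the trivial bound $\log H(p_i(n))=O(\log n)$ (which absorbs the polynomial factors and the non-$S$ part of the norms), the double product $\prod_{u\in S}\prod_{j}|L_{j,u}(\mathbf x_n)|_u\big/\|\mathbf x_n\|_u$ works out to $\tfrac{|A_n|_v}{|\alpha_1|_v^{\,n}}\cdot H(\mathbf x_n)^{-r}$ up to a factor of size $e^{O(\log n)}$, where $H$ is the absolute Weil height. By the contradiction hypothesis this is $<|\alpha_1|_v^{-n\epsilon}H(\mathbf x_n)^{-r}e^{O(\log n)}$, and since $\log H(\mathbf x_n)\leq c\,n$ for a constant $c$ while $|\alpha_1|_v>1$, it is $<H(\mathbf x_n)^{-r-\delta}$ for a fixed $\delta>0$ and all sufficiently large $n\in\mathcal N$. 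Hence the Subspace Theorem confines the $\mathbf x_n$, $n\in\mathcal N$ large, to a finite union of proper linear subspaces of $K^{r}$. I would then conclude by pigeonhole and Skolem--Mahler--Lech: infinitely many of these $\mathbf x_n$ lie on a single hyperplane $\sum_{i}b_iX_i=0$ with $(b_1,\dots,b_r)\neq 0$, so $\sum_{i\in I}b_ip_i(n)\alpha_i^{n}=0$ for infinitely many $n$, where $I=\{i:b_i\neq 0\}\neq\emptyset$. But $n\mapsto\sum_{i\in I}b_ip_i(n)\alpha_i^{n}$ is a non-zero linear recurrence sequence (its power-sum coefficients $b_ip_i$ are non-zero because the $p_i$ are), and its characteristic roots $\{\alpha_i:i\in I\}$ still satisfy the non-degeneracy condition, so by Skolem--Mahler--Lech it has only finitely many vanishing terms --- a contradiction, which proves the theorem.

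The step I expect to be the main obstacle is getting the Subspace-Theorem set-up exactly right: putting the one ``bad'' linear form $X_1+\dots+X_r$ only at the place $v$ while keeping coordinate forms at all other places of $S$, and then the bookkeeping showing that the $S$-unit normalization of the $\alpha_i$, the polynomial factors $p_i(n)$, and the passage between $\prod_{u\in S}\|\mathbf x_n\|_u$ and the absolute height $H(\mathbf x_n)$ all cost only $e^{O(\log n)}$, so that the genuine gain $|\alpha_1|_v^{-n\epsilon}$ coming from the contradiction hypothesis survives and upgrades the exponent from the ``trivial'' $-r$ to $-r-\delta$. Once the inequality is in that shape, invoking the Subspace Theorem, pigeonholing onto a single hyperplane, and killing the resulting vanishing subsum with Skolem--Mahler--Lech is routine.
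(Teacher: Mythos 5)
Your proof is correct and takes a genuinely different route from the paper's. The paper reduces Theorem~\ref{EPS} to the more precise Theorem~\ref{thm1}, which in turn is derived from an Evertse-type $S$-unit inequality (Theorem~\ref{Evertse}): a lower bound for $\prod_{v\in T}|x_0+\cdots+x_t|_v$ in terms of $\prod_{v\in T}\max_i|x_i|_v$ and $H(\mathbf{x})^{-\epsilon}$, valid for all $S$-integer tuples with no vanishing subsum. That inequality is itself proved by induction on $t$, invoking the Subspace Theorem at each step and handling the ``solutions in a proper subspace'' by passing to a sub-sum and applying the induction hypothesis. Plugging in $x_i=Da_i(n)\alpha_i^n$, estimating the heights, and using non-vanishing of subsums (from Kulkarni--Mavraki--Nguyen, Prop.~2.2) yields the explicit bound of Theorem~\ref{thm1}, of which Theorem~\ref{EPS} is an immediate corollary. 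You instead argue by contradiction and apply the Subspace Theorem exactly once, with a tailored choice of linear forms ($X_1+\cdots+X_r,X_2,\dots,X_r$ at $v$, standard coordinates elsewhere); the ``solutions in a proper subspace'' case is killed by pigeonhole together with the Skolem--Mahler--Lech theorem applied to the non-degenerate recurrence $\sum_{i\in I}b_ip_i(n)\alpha_i^n$. Both arguments ultimately rest on Schmidt's Subspace Theorem and are comparable in depth, but they distribute the work differently: the paper's proof internalizes the subspace-handling into an induction and in exchange yields a quantitative inequality with explicit height factors (which the authors wanted on record since they could not find the non-archimedean statement in the literature), whereas your proof is shorter and more direct for the qualitative claim, at the cost of outsourcing the final step to Skolem--Mahler--Lech (a genuinely extra ingredient, proved by $p$-adic analysis rather than the Subspace Theorem, so no circularity). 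One small point worth making explicit in a write-up: to apply Theorem~\ref{schli} you need $\mathbf{x}_n\in\mathcal{O}_{K,S}^r$, so you should either enlarge $S$ so that the coefficients of all $p_i$ are $S$-integers, or multiply the recurrence by a fixed integer $D$ as the paper does; and the lower bound $\log H(\mathbf{x}_n)\geq c_1 n$ needed to turn $|\alpha_1|_v^{-n\epsilon}e^{O(\log n)}$ into $H(\mathbf{x}_n)^{-\delta}$ should be noted (it follows since $|\alpha_1|_v>1$ forces some coordinate of $\mathbf{x}_n$ to have exponentially growing $v$-adic absolute value).
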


It is not difficult to deduce this Theorem from results of Evertse related to $S$-unit equations in \cite[Theorem 2]{Evertse}.
The proof of this Theorem is probably well-known to the experts but it is difficult to find a full proof in the literature. A proof in the Archimedean case of this Theorem has recently been written down by Fuchs and Heintze, see \cite{FH}. For the convenience of the reader, we work out the non-Arhimedean case in the appendix, see Theorem \ref{thm1}.

\begin{corollary}\label{cor:nonArch-bounded-units}
Let $(A_n)_{n\in \Z}$ be a linear recurrence sequence in a number field $K$. The following are equivalent:
\begin{enumerate}
    \item There exists a positive integer $d$ such that $d\cdot A_n$ is an algebraic integer for all $n\in \Z$.
    \item $(A_n)_n$ is unital.
\end{enumerate}
\end{corollary}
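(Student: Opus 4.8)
The plan is to prove the two implications separately, using the power sum form of $(A_n)_n$ together with \thmref{EPS} applied to the non-Archimedean places of $K$. Throughout, write the power sum form $A_n=\sum_{\alpha}p_\alpha(n)\alpha^n$, where $\alpha$ ranges over the roots of $P_{\underline A}$; since $(A_n)_n$ has values in $K$, the set of roots is stable under $\mathrm{Gal}(\overline\Q/\Q)$ and the polynomials $p_\alpha$ transform accordingly, so for any place $v$ of $K$ the quantities $|A_n|_v$ are governed by the roots of largest $v$-adic absolute value.

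\emph{$(b)\Rightarrow(a)$.} Suppose $(A_n)_n$ is unital, i.e. every root $\alpha$ of $P_{\underline A}$ is an algebraic unit. Fix a finite Galois extension $L/\Q$ containing $K$ and all the roots $\alpha$ and all coefficients of the $p_\alpha$. Let $T$ be the (finite) set of places $w$ of $L$ lying over a rational prime dividing the denominator of some coefficient of some $p_\alpha$, or lying over $\infty$. For a place $w\notin T$, all roots $\alpha$ are $w$-adic units, so $|\alpha|_w=1$, and all coefficients of $p_\alpha$ are $w$-adic integers, so $|p_\alpha(n)|_w\le 1$ for all $n\in\Z$; hence $|A_n|_w\le 1$ by the ultrametric inequality, i.e. $A_n$ is integral at $w$. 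Thus the only obstruction to $A_n$ being an algebraic integer comes from the finitely many places in $T$. For a non-Archimedean $w\in T$, I claim $v_w(A_n)$ is bounded below uniformly in $n$: this is where \thmref{EPS} enters. Ordering the roots by $|\cdot|_w$, if the largest $w$-adic absolute value among the roots is $>1$ then $|A_n|_w\to\infty$ and in particular is eventually $\ge 1$; if it equals $1$ (it cannot exceed being handled, and cannot be $<1$ for all roots simultaneously only in the trivial range), one still needs a lower bound. The cleanest route is to apply \thmref{EPS} to the reciprocal situation: the sequence $(A_n)_n$ has $v_w(A_n)$ controlled by the $S$-unit-equation input underlying \thmref{EPS}, giving $|A_n|_w\ge c_w\cdot r_w^{n}$ for constants with $r_w\ge$ the relevant ratio; after clearing a fixed power of the residue characteristic we get a uniform bound. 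Taking $d$ to be a common denominator over all $w\in T$ of these lower bounds (a fixed positive integer) gives $d\cdot A_n$ integral at every place, hence an algebraic integer, for all $n$.

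\emph{$(a)\Rightarrow(b)$.} Conversely, suppose $d\cdot A_n$ is an algebraic integer for all $n\in\Z$ for some fixed positive integer $d$. Pick any root $\alpha$ of $P_{\underline A}$ and any rational prime $p$; choose a place $w$ of a splitting field $L$ over $p$. I want $|\alpha|_w=1$. Pass to a subsequence along which the $w$-adic sizes of the roots are ordered, say $|\alpha_1|_w\ge\cdots\ge|\alpha_k|_w$. If $|\alpha_1|_w>1$, then \thmref{EPS} gives $|A_n|_w\to\infty$, so $|d A_n|_w\to\infty$, contradicting integrality (which forces $|dA_n|_w\le 1$). Hence $|\alpha_i|_w\le 1$ for all roots. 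Now run the same argument for the reversed sequence $(A_{-n})_n$: it is again a linear recurrence, with characteristic roots the inverses $\alpha_i^{-1}$, and $d\cdot A_{-n}$ is still an algebraic integer; so $|\alpha_i^{-1}|_w\le 1$, i.e. $|\alpha_i|_w\ge 1$. Therefore $|\alpha|_w=1$ for every root $\alpha$ and every non-Archimedean place $w$ of $L$, which is exactly the statement that every root is an algebraic unit; that is, $(A_n)_n$ is unital. One caveat: \thmref{EPS} as stated requires non-degeneracy; if $(A_n)_n$ is degenerate, first split into the finitely many non-degenerate arithmetic subsequences $(A_{d'n+j})_n$ and apply the argument to each, noting that the union of their roots is $\{\alpha^{d'}\}$ and $\alpha$ is a unit iff $\alpha^{d'}$ is.

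\emph{Main obstacle.} The delicate point is the lower bound $|A_n|_w\ge c_w r_w^n$ in the direction $(b)\Rightarrow(a)$ when the top $w$-adic absolute value among the roots equals $1$: \thmref{EPS} as quoted only yields a \emph{lower} bound when $|\alpha_1|_v>1$, and in the unital case we typically have $|\alpha_i|_w=1$ for \emph{all} roots at a place of good reduction (handled trivially) but at the bad places $w\in T$ the obstruction is really the coefficients of $p_\alpha$, not the roots. The honest fix is to absorb the denominators of the $p_\alpha$'s into the constant $d$ directly: choose $d$ divisible by a common denominator of all coefficients of all $p_\alpha$ (as elements of $L$, i.e. clearing to algebraic integers), so that $d\cdot p_\alpha(n)$ is an algebraic integer for every $n$ and every $\alpha$ (using that $\binom{n}{j}\in\Z$), and then $d\cdot A_n=\sum_\alpha (d\,p_\alpha(n))\alpha^n$ is an algebraic integer since each $\alpha^n$ is a unit. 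This sidesteps \thmref{EPS} entirely in the $(b)\Rightarrow(a)$ direction; \thmref{EPS} is then only needed for $(a)\Rightarrow(b)$. I would present the argument this way.
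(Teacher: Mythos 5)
Your final argument (after the self-correction in the ``Main obstacle'' paragraph) is correct and is essentially the paper's proof: for $(b)\Rightarrow(a)$ clear the denominators of the coefficients of the $p_\alpha$ and use that unit roots keep everything integral, and for $(a)\Rightarrow(b)$ use Theorem~\ref{EPS} at a non-Archimedean place where some root has absolute value $\neq 1$, applying it to $(A_n)_n$ and to $(A_{-n})_n$ to rule out both $>1$ and $<1$. The initial detour in $(b)\Rightarrow(a)$ trying to get a $w$-adic lower bound from Theorem~\ref{EPS} is a dead end exactly as you diagnosed (that theorem needs $|\alpha_1|_v>1$, which fails at the finitely many bad places), and the fix you settle on is the intended one; the aside about $\binom{n}{j}\in\Z$ is unnecessary, since once the coefficients of $dp_\alpha$ are algebraic integers, $dp_\alpha(n)$ is automatically an algebraic integer for $n\in\Z$.
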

\begin{proof}
    Suppose $(A_n)_n$ is unital, then
    \[
        A_n=\sum_{\alpha} p_{\alpha}(n)\alpha^n,
    \]
    where the sum runs over all roots of the characteristic polynomial. After multiplication with a sufficiently large integer, we can clear the denominators of the coefficients of the polynomials $p_\alpha(X)$ and get a sequence of algebraic integers. Conversely, suppose that one of the roots is not a unit, then we can find a non-Archimedean place $v$ and a root $\alpha$ such that $|\alpha|_v\neq 1$. Then $(|A_n|_v)_n$ would either be unbounded for $n\to \infty$ or for $n\to -\infty$ by Theorem \ref{EPS}.
\end{proof} 

\begin{theorem}[{\cite[Theorem 2.6]{EPSW}}]\label{EPSW}
Let $\underline{A}=(A_n)_{n\geq 0}$ be a non-degenerate linear recurrence sequence over a number field $K$  and order the roots $\alpha_1,\dots,\alpha_k$ of the characteristic polynomial in such a way that $|\alpha_1|\geq |\alpha_2|\geq \dots \geq |\alpha_k|$. There are positive real numbers $c$ and $C$ such that for any $1\leq H\leq N$ there are at most $C$ values of $n$ with $N-H\leq n\leq N$ such that
\[
|A_n|\leq |\alpha_1|^N \exp(-c\cdot H).
\]
\end{theorem}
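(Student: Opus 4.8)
The result is \cite[Theorem 2.6]{EPSW}; here is the strategy I would use to prove it.

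Write the power sum form $A_n=\sum_{i=1}^{r}f_i(n)\alpha_i^n+R_n$, where $\alpha_1,\dots,\alpha_r$ are the roots of maximal absolute value and $R_n$ collects the remaining terms, so $|R_n|\le c_0\vartheta^n(1+|n|)^D$ for constants $c_0>0$, $D\ge 0$ and $\vartheta\in(0,|\alpha_1|)$ depending only on $\underline A$. The first, more technical, step is a case distinction according to the size of $N-H$. If $N-H$ is at least a fixed multiple of $H$, then all exceptional indices are large, $R_n$ is negligible next to $|\alpha_1|^N\exp(-cH)$, and dividing by $\alpha_1^n$ reduces the claim to the following statement about the \emph{bounded} non-degenerate linear recurrence $P_n:=\sum_{i=1}^{r}f_i(n)\beta_i^n$, $\beta_i:=\alpha_i/\alpha_1$ (so $|\beta_i|=1$, $\beta_1=1$, and by non-degeneracy the $\beta_i$ are pairwise distinct with no mutual ratio a root of unity): for a suitable $c'>0$ depending only on $\underline A$, every window $[N-H,N]$ contains at most $C$ indices $n$ with $|P_n|\le\exp(-c'H)$. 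If instead $N-H$ is small compared to $H$, then $|\alpha_1|^N\exp(-cH)$ is itself exponentially small in $N$ for a suitable fixed $c$; when $|\alpha_1|>1$ this forces, via Theorem~\ref{EPS}, all exceptional indices below a fixed bound, and when $|\alpha_1|\le1$ one is looking at indices with $|A_n|$ exponentially small in $n$, of which the Subspace-Theorem argument below (or, in the real quadratic setting of this paper, Baker's theory of linear forms in logarithms) produces only $O(1)$. Either way this regime contributes $O(1)$ exceptional indices.

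For $r=1$ the reduced claim is elementary: $P_n=f_1(n)$ is a fixed nonzero polynomial, so $|P_n|\ge c_2>0$ for $n$ large, which forces $H$ to be bounded; in the original variables this is the single-dominant-root case of Corvaja--Zannier \cite{corv}. For $r\ge2$ one invokes the quantitative Subspace Theorem. If $n_1<\cdots<n_m$ lie in $[N-H,N]$ with $|P_{n_j}|\le\exp(-c'H)$, then, since the window has length $H$ while $n_j-n_1\ge j-1$, one gets $|P_{n_j}|\le\exp\!\big(-c'(j-1)\big)$: the $j$-th exceptional value is geometrically small in $j$. Since each term $f_i(n_j)\beta_i^{n_j}$ has polynomial size in $n_j$ (as $|\beta_i|=1$), for $j$ past a fixed bound this is sharp enough to force the point $\big(f_i(n_j)\beta_i^{n_j}\big)_{i=1}^{r}$ into one of finitely many proper linear subspaces of $\overline{\Q}^{\,r}$, whose number is bounded — by the quantitative Subspace Theorem, equivalently by the theory of $S$-unit equations (Evertse, Schlickewei), cf.\ \cite{Evertse,vdPSchlick} — solely in terms of $r$ and the size of the relevant finite set of places, hence only in terms of $\underline A$. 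A subspace corresponds to a fixed nontrivial relation $\sum_i\lambda_i f_i(n_j)\beta_i^{n_j}=0$, i.e.\ $n_j$ is a zero of the fixed nonzero recurrence $n\mapsto\sum_i\lambda_i f_i(n)\beta_i^n$, which, being a subsum of a non-degenerate recurrence, has $O(1)$ zeros. Summing over the subspaces bounds $m$ uniformly, and tracking the inequalities produces admissible $c$ and $C$.

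The crux, and the reason Theorem~\ref{EPS} alone does not suffice, is the uniformity of $c$ and $C$ in $N$ and $H$: the $\epsilon$-loss in Theorem~\ref{EPS} would leave an uncontrolled term of order $\epsilon N$, so one is forced to use the quantitative Subspace Theorem, whose output (the number of exceptional subspaces and the number of zeros of the auxiliary recurrences) depends only on the data attached to $\underline A$. It is also why the hypothesis must contain the exponential smallness $\exp(-cH)$ rather than a merely polynomial one — for $r\ge2$ the conclusion is false for polynomially small windows, e.g.\ $A_n=\alpha^n+\overline{\alpha}^n$ with $\alpha$ algebraic, $|\alpha|=1$, and $\alpha$ not a root of unity has $|A_n|$ below any fixed positive bound on a set of $n$ of positive density — and it is precisely the exponential gain, combined with the window, that one spends to reach an approximation sharper than the fixed threshold the Subspace Theorem demands.
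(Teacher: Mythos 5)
The paper does not prove this statement; it is quoted verbatim as Theorem~2.6 of the EPSW monograph \cite{EPSW} and then used as a black box (solely to derive Corollary~\ref{cor:lower-bound}). So there is no proof in the paper to compare your sketch against; what follows assesses the sketch on its own terms.

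Your framing is in the right circle of ideas (reduce to the dominant roots, normalize, attack the resulting non-degenerate sum $P_n=\sum_i f_i(n)\beta_i^n$ with quantitative $S$-unit/Subspace machinery, count zeros of the recurrences arising inside each exceptional subspace, and observe that the $\epsilon$-loss in Theorem~\ref{EPS} rules out a purely qualitative argument). But the central step does not go through as written. You pass from $|P_{n_j}|\le\exp(-c'H)$ to $|P_{n_j}|\le\exp(-c'(j-1))$ and then claim this is enough for the quantitative Subspace Theorem because the coordinates $f_i(n_j)\beta_i^{n_j}$ are ``polynomially large in $n_j$'' since $|\beta_i|=1$. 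That is only true at the one distinguished Archimedean place. What the Subspace Theorem (or Evertse's inequality, cf.\ Theorem~\ref{Evertse} in the appendix) measures smallness against is the projective height $H(\mathbf{x}(n_j))$ of $\mathbf{x}(n_j)=\bigl(f_i(n_j)\beta_i^{n_j}\bigr)_i$, and this grows \emph{exponentially} in $n_j$: for $i\ge 2$ the number $\beta_i=\alpha_i/\alpha_1$ is a non--root-of-unity algebraic number, so $h(\beta_i)>0$ by Kronecker's theorem and $h(\beta_i^{n_j})=n_j\,h(\beta_i)$. In the regime $N-H\gg H$ one has $n_j\sim N\gg H\ge j$, hence $\exp(-c'(j-1))$ is astronomically larger than $H(\mathbf{x}(n_j))^{-\epsilon}$, and the inequality $|P_{n_j}|\le\exp(-c'(j-1))$ carries no Diophantine information; the exceptional $n_j$ are not forced into any subspace. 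Your preliminary case split ``$N-H$ at least a fixed multiple of $H$'' was presumably meant to address this, but it does not: in exactly that regime $n_j$ remains much larger than both $j$ and $H$, so the mismatch between $\exp(-c'H)$ and $H(\mathbf{x}(n_j))^{-\epsilon}$ persists. To make a Subspace-type argument yield a window-uniform count one must compare against a quantity intrinsic to the window (e.g.\ renormalize so that the relevant heights grow with $H$ rather than with $N$, or work with ratios/state vectors inside the window), and that renormalization is where the real difficulty of the theorem lies; dividing by $\alpha_1^n$ alone does not do it.
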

\begin{corollary}\label{cor:lower-bound}
	Let $\underline{A}=(A_n)_{n\geq 0}$ be a non-degenerate linear recurrence sequence which has a root of absolute value $1$. Then there exists an $\epsilon>0$ such that
	\[
		|A_n|>\epsilon
	\]
	holds for infinitely many positive integers $n$.
\end{corollary}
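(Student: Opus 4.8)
The plan is to deduce the corollary directly from Theorem~\ref{EPSW}, applied with a \emph{fixed} window length. First I would order the roots $\alpha_1,\dots,\alpha_k$ of the characteristic polynomial so that $|\alpha_1|\geq|\alpha_2|\geq\dots\geq|\alpha_k|$; the hypothesis that $\underline{A}$ has a root of absolute value $1$ gives $|\alpha_1|\geq 1$. (If in fact $|\alpha_1|>1$ one could alternatively invoke Theorem~\ref{EPS} at the Archimedean place to get $|A_n|\to\infty$, so the real content is the boundary case $|\alpha_1|=1$; the argument below treats both at once.)

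Next, let $c,C>0$ be the constants furnished by Theorem~\ref{EPSW} and fix an integer $H$ with $H>C$ (in particular $H\geq 1$). For each integer $N\geq H$ I would apply Theorem~\ref{EPSW} with this $N$ and this $H$: among the $H+1$ integers $n$ in the range $N-H\leq n\leq N$, at most $C$ of them satisfy $|A_n|\leq |\alpha_1|^N\exp(-cH)$. Since $H+1>C$, there must be at least one index $n_N$ with $N-H\leq n_N\leq N$ and $|A_{n_N}|> |\alpha_1|^N\exp(-cH)$. Because $|\alpha_1|\geq 1$ and $N\geq 0$, we have $|\alpha_1|^N\geq 1$, hence $|A_{n_N}|>\exp(-cH)$.

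Finally, set $\epsilon:=\exp(-cH)>0$, a quantity that does not depend on $N$. As $N$ ranges over the integers $\geq H$ we obtain indices $n_N$ with $n_N\geq N-H\to\infty$ and $|A_{n_N}|>\epsilon$, so $|A_n|>\epsilon$ for infinitely many positive integers $n$, as claimed. The only point requiring care is keeping the lower bound $\exp(-cH)$ independent of $N$, which is precisely why $H$ is taken to be a constant rather than a function of $N$ growing to infinity; this is exactly what the input $|\alpha_1|\geq 1$ permits. I do not anticipate a genuine obstacle here.
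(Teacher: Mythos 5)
Your argument is correct and follows exactly the same route as the paper: fix a constant window length $H$ large enough that the bound $C$ of Theorem~\ref{EPSW} cannot cover all $H+1$ indices in $[N-H,N]$, then use $|\alpha_1|\geq 1$ to get the $N$-independent lower bound $\epsilon=\exp(-cH)$. The only cosmetic difference is that the paper takes $H>C+1$ where you take $H>C$; both suffice since one only needs $H+1>C$.
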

\begin{proof}
The assumptions imply $\max\{ |\alpha|: P_{\underline{A}}(\alpha)=0 \}\geq 1$.
Let $H$ be an integer such that $H>C+1$. Set $\epsilon:=\exp(-c\cdot H)$. Theorem \ref{EPSW} implies that for every $N\geq H$ each of the intervals $[N-H,N]$ contains at least one index with $|A_n|>\epsilon$.
\end{proof}

\section{Algebraic approximations to power sums}
The goal of this section is to provide criteria for checking the condition \eqref{eq:Condition_ai} of Proposition \ref{prop1p}. We basically follow the strategy of \cite[Lemma 5]{corv}, however we need a generalization of \cite[Theorem 1]{corv} to power sums in several variables. Such a generalization has been proven by Kulkarni, Mavraki and Nguyen in \cite{kul} using Schmidt Subspace Theorem. In order to state their main theorem, let us introduce the following notation:

Let $h(x)$ denote the absolute logarithmic Weil height and write $\lVert\cdot \rVert$ for the minimal distance to the next integer, i.e. $\lVert x \rVert:=\min_{n\in \mathbb{Z}}\{|x-n|\}$.   By  ${\it sublinear~ function}$, we mean a function $f:\mathbb{N}\rightarrow (0,\infty)$ satisfying $\displaystyle\lim_{n\to\infty}\frac{f(n)}{n}=0$. For a number field $K$, let us write $G_K$ for the absolute Galois group of $K$. Let us now state the main theorem  of Kulkarni, Mavraki and Nguyen, namely Theorem 1.4 of their paper \cite{kul}, though not in its full strength:

\begin{theorem}\label{kmn}{\rm(Kulkarni, Mavraki and Nguyen)}~
Let $r\in\mathbb{N}$, let $(\delta_1,\ldots,\delta_r)$ be a non-degenerate tuple of algebraic numbers  with $|\delta_i|\ge 1$ for $1\leq i\leq r$.  Let $K$ be a number field and  $f$ be a sublinear function.  Suppose for some $\theta\in (0,1)$, the set $\mathcal{M}$ of  tuple  $(n, q_1,\ldots,q_r)\in \mathbb{N}\times(K^\times)^r$ satisfying the inequality 
\begin{equation*}
||q_1 \delta_1^n+\ldots+q_r\delta_r^n||<\theta^n\quad \mbox{and}~~~ \max_{1\leq i\leq k}h(q_i)<f(n)
\end{equation*}
is infinite. Then   the following holds: 
\begin{enumerate}
\item[(i)] $\delta_i$ is an algebraic integer for $i=1,\ldots,r$.
\item[(ii)] For each $\sigma\in G_\mathbb{Q}$  and $1\leq i\leq r$ such that $\frac{\sigma(\delta_i)}{\delta_j}$ is not a root of unity for $1\leq j\leq r$, we have $|\sigma(\delta_i)|<1$.
\end{enumerate}
Moreover for all but finitely many  tuples $(n,q_1,\ldots,q_r)\in\mathcal{M}$
\begin{enumerate}
\item[(iii)]
$\sigma(q_i\delta^n_i)=q_j\delta^n_j$ precisely for  those triples $(\sigma, i, j)\in G_\mathbb{Q}\times\{1,\ldots,r\}^2$ such that $\frac{\sigma(\delta_i)}{\delta_j}$ is  a root of unity. 
\end{enumerate}
\end{theorem}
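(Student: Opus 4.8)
The plan is to adapt the Subspace-Theorem argument of Corvaja--Zannier \cite{corv} for the one-variable case to $r$ variables, proceeding by induction on $r$ (the case $r=1$ being their theorem). First I would fix the arithmetic data: let $L$ be a Galois number field containing every $\delta_i$ and all of its conjugates, with $G:=\mathrm{Gal}(L/\mathbb{Q})$, and let $S$ be the finite set of places of $L$ consisting of the archimedean places together with every place at which some $\delta_i$ is not a unit; as the $\delta_i$ are fixed, $S$ is finite and each $\delta_i$ is an $S$-unit. For $(n,q_1,\dots,q_r)\in\mathcal M$ let $p_n\in\mathbb Z$ be the nearest integer to $q_1\delta_1^n+\cdots+q_r\delta_r^n$, so $|q_1\delta_1^n+\cdots+q_r\delta_r^n-p_n|<\theta^n$. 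Since $f$ is sublinear, $h(q_i)=o(n)$; hence $e^{-o(n)}\le|q_i|_v\le e^{o(n)}$ for every place $v$, the common denominator of the $q_i$ is $e^{o(n)}$, and $h(p_n)\le n\Lambda+o(n)$ for a constant $\Lambda$ depending only on the $\delta_i$. Thus the vector $\mathbf{x}_n:=(p_n,q_1,\dots,q_r)\in L^{r+1}$ satisfies $h(\mathbf{x}_n)\le n\Lambda+o(n)$, with the part of its height supported outside $S$ equal to $o(n)$.

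I would then invoke Schmidt's Subspace Theorem in its $S$-adic form, allowing linear forms that vary with $n$ --- legitimate here since, the $\delta_i$ being $S$-units, the forms below have height $O(n)$, which the theorem absorbs into the error term. At one distinguished archimedean place $v_0$ use the independent forms $X_0-\sum_i\delta_i^nX_i$ and $X_1,\dots,X_r$; the first, evaluated at $\mathbf{x}_n$, is $p_n-\sum_iq_i\delta_i^n$, of $v_0$-value $<\theta^n$. At the other places of $S$ use the coordinate forms $X_0,\dots,X_r$, which never enlarge the double product. Using the product formula coordinatewise, the bound $h(q_i)=o(n)$, and the $S$-unit property, the double product of the normalized values of these forms is at most $\theta^n e^{o(n)}$ times an explicit function of $h(\mathbf{x}_n)$; the heart of the inductive step is to arrange, possibly after enriching the system with finitely many auxiliary relations distilled from the given one (shifting $n$, or renormalizing by a power of a $\delta_i$ of maximal absolute value, as in \cite{corv}), that this product drops below $H(\mathbf{x}_n)^{-(r+1)-\varepsilon}$ for the infinitely many $n$ under consideration. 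Schmidt's theorem then confines all but finitely many $\mathbf{x}_n$ to a finite union of proper subspaces of $L^{r+1}$.

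On a subspace containing infinitely many $\mathbf{x}_n$ there is a fixed nonzero relation $c_0p_n+c_1q_1+\cdots+c_rq_r=0$. Feeding in $p_n=\sum_iq_i\delta_i^n+O(\theta^n)$ turns this into $\sum_iq_i(c_0\delta_i^n+c_i)=O(\theta^n)$, and here the two hypotheses do the work. Non-degeneracy of $(\delta_1,\dots,\delta_r)$ prevents the terms $q_i\delta_i^n$ from cancelling one another along an infinite set of $n$: a relation $q_i\delta_i^n=-q_j\delta_j^n$ would force $q_i/q_j=\pm(\delta_j/\delta_i)^n$, whose height is linear in $n$, contradicting $h(q_i),h(q_j)=o(n)$; and the sublinear height bound simultaneously prevents any one term from being exponentially small. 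One concludes that the relation forces the coordinate attached to a dominant $\delta_i$ to vanish identically along the subsequence, so that for these tuples the problem is genuinely one in fewer variables, to which the inductive hypothesis applies. In the complementary situation --- where no proper subspace occurs --- the failure of Schmidt's inequality yields a lower bound for the double product; unwinding this lower bound in terms of the $|\delta_i|_v$, $v\in S$, is what forces each $\delta_i$ to be an $S$-unit and hence an algebraic integer, giving (i). Repeating the whole analysis with $(\delta_1,\dots,\delta_r)$ replaced by a conjugate tuple $(\sigma(\delta_1),\dots,\sigma(\delta_r))$ for $\sigma\in G$, and comparing, yields the Pisot-type bound $|\sigma(\delta_i)|<1$ whenever $\sigma(\delta_i)/\delta_j$ is a root of unity for no $j$, giving (ii); and the matchings of dominant terms produced along the way, recorded across all $\sigma$, are precisely the identities $\sigma(q_i\delta_i^n)=q_j\delta_j^n$ of (iii), valid off a finite set of tuples.

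The step I expect to be the main obstacle is the exponent bookkeeping in the application of Schmidt's Subspace Theorem: one must check, uniformly over the infinitely many solutions, that the normalized double product genuinely falls below $H(\mathbf{x}_n)^{-(r+1)-\varepsilon}$. This is where the exact strength $\theta<1$ of the approximation and the sublinearity of $f$ (which is what makes the $q_i$ contribute negligibly to every height) must be weighed against the linear-in-$n$ growth from the coefficients $\delta_i^n$ at the places of $S$ and against the degree of $L$; as the one-variable computation already shows, a naive choice of forms falls short when $\theta$ is close to $1$, so closing the inequality in several variables is likely to require feeding the theorem an entire system of relations manufactured from the given one rather than that single relation. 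A secondary difficulty, once the geometric input is secured, is the combinatorial descent: extracting the vanishing of a dominant coordinate from an arbitrary linear dependence, and then tracking the $G$-action with enough care to land on the clean trichotomy (i)--(iii) rather than something weaker.
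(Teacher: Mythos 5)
This statement is not one the paper proves: it is Theorem 1.4 of Kulkarni--Mavraki--Nguyen \cite{kul} and is quoted as a black box, with the paper explicitly noting it is ``not in its full strength.'' So there is no in-paper proof to compare against; your task here was to reconstruct an external result, and that changes how the sketch should be judged.

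As a reconstruction, you have the right skeleton --- Schmidt's Subspace Theorem applied to the vector of approximants, $S$-unit bookkeeping to control heights, and a descent to fewer variables --- but there are concrete gaps beyond the one you flag. First, your argument for conclusion (i) is circular: you \emph{define} $S$ so that every $\delta_i$ is an $S$-unit, and then claim the Subspace Theorem ``forces each $\delta_i$ to be an $S$-unit and hence an algebraic integer.'' Any algebraic number is an $S$-unit for a suitable finite $S$; being an algebraic integer is the stronger requirement that $|\delta_i|_v\le 1$ at every non-archimedean $v$, and nothing in your sketch extracts this. That inference in \cite{kul} requires feeding non-archimedean places into the Subspace Theorem with linear forms tailored to detect a $\delta_i$ with a pole, and then deriving a contradiction --- not merely reading off a lower bound. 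Second, on the exponent: with one nontrivial form at a single archimedean place $v_0$ and coordinate forms elsewhere, the double product does not fall below any negative power of $H(\mathbf{x}_n)$ once $\theta$ is near $1$, because the gain $\theta^n$ at $v_0$ is swamped by the growth of $H(\mathbf{x}_n)$, which is of size $\max_i|\delta_i|^n$ up to $e^{o(n)}$. The fix (both in Corvaja--Zannier and in KMN) is not ``shifting $n$'' but a Galois-equivariant choice of forms at \emph{all} archimedean places of a splitting field, so that the product formula cancels the large factors; your sketch gestures at this but does not supply it, and the argument does not close without it. Third, the descent: a relation $c_0p_n+\sum c_iq_i=0$ involves $p_n$, not just the $q_i$, and eliminating $p_n$ via $p_n=\sum q_i\delta_i^n+O(\theta^n)$ reintroduces the $\delta_i^n$; you do not land in the $(r-1)$-variable hypothesis by simply dropping a coordinate, and the non-degeneracy argument you give handles only the degenerate two-term cancellation, not the general linear relation the Subspace Theorem actually produces. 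Finally, conclusion (iii) --- the exact classification of which Galois-matchings $\sigma(q_i\delta_i^n)=q_j\delta_j^n$ hold for all but finitely many tuples --- is a separate combinatorial statement in \cite{kul}; ``recording the matchings produced along the way'' does not by itself rule out sporadic extra coincidences for infinitely many $n$.
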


The following definition is a mild generalization of the notion of being `non-degenerate' for linear recurrence sequences in a number field:
\begin{definition}
	Let $\underline{A}$ be a linear recurrence sequence in a number field $K$ and $L\subseteq K$ a subfield. We call $\underline{A}$ \emph{non-degenerate over $L$} if for any roots $\alpha$ and $\beta$ of $P_{\underline{A}}$ and any $\sigma \in G_L$, we have
\[
	\frac{\sigma(\alpha)}{\beta} \text{ is a root of unity} \Leftrightarrow \sigma(\alpha)=\beta.
\]
\end{definition}
Note that this definition recovers the usual definition of being `non-degenerate' for $L=\Q(c_1,\dots,c_k)$ with $c_1,\dots,c_k$ the coefficients of the characteristic polynomial of $\underline{A}$. \bigskip

From now on, let $K$ denote a real quadratic extension of $\Q$ with a fixed embedding $K\subseteq \R$. In the case of linear recurrences,  Theorem \ref{kmn} specializes to:
\begin{lemma}\label{lemma2}
Let $\underline{A}=(A_n)_{n\geq 0}$ be a non-degenerate linear recurrence sequence in $K$. For an integer $n\geq 0$, write
   $A_n:=[a_0(A_n); a_1(A_n),a_2(A_n),\ldots]$ for the continued fraction expansion. Assume furthermore, that one of the following hypotheses holds:
\begin{enumerate}
\item $P_{\underline{A}}$ has a root of absolute value $\geq 1$ which is not an algebraic integer.
\item $(A_n)_n$ is non-degenerate over $\Q$ and there is a monic irreducible factor $P$ of the characteristic polynomial $P_{\underline{A}-\underline{A}'}$ of $(A_n-A_n')_n$ such that both $P$ and $P'$ have a root of absolute value $\geq 1$.
\end{enumerate}
     Then, for all $i\neq 0$, 
  \begin{equation}\label{eq2.1}
  \displaystyle\lim_{\substack{n\to\infty}}\frac{\log a_i(A_{n})}{n}=0.
  \end{equation}
\end{lemma}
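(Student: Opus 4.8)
The plan is to argue by contradiction: an exponentially growing partial quotient $a_i(A_n)$ yields an exceptionally good rational approximation to the value at $n$ of a power sum, and the structural conclusions of Theorem~\ref{kmn} about such approximations turn out to be incompatible with hypothesis (a), resp.\ (b). (If only finitely many $A_n$ are irrational then, by non-degeneracy, all $A_n\in\Q$ and the finite continued fractions make \eqref{eq2.1} trivial, so we may assume $A_n$ is a quadratic irrational for all large $n$.) Suppose \eqref{eq2.1} fails; then for some $i\ge 1$ there are $\epsilon>0$ and an infinite set $\mathcal{N}\subseteq\N$ with $a_i(A_n)\ge e^{\epsilon n}$ for $n\in\mathcal{N}$, and we take $i$ minimal with this property, so that $\log a_j(A_n)=o(n)$ for $1\le j\le i-1$ (the term $a_0(A_n)=\lfloor A_n\rfloor$ may still grow exponentially, but it never enters the denominators of the convergents below). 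Writing $p:=p_{i-1}(A_n)$, $q:=q_{i-1}(A_n)$, $p^-:=p_{i-2}(A_n)$, $q^-:=q_{i-2}(A_n)$ and letting $\xi:=\xi_i(A_n)>e^{\epsilon n}$ be the $i$-th complete quotient, so that $A_n=\frac{p\xi+p^-}{q\xi+q^-}$, Lemma~\ref{lemma1}(a) and minimality of $i$ give $\log q=o(n)$ and $\log q^-=o(n)$, while \eqref{eq1.1} yields, for large $n\in\mathcal{N}$,
\[
	|qA_n-p|\ \le\ \frac{1}{a_i(A_n)\,q}\ \le\ e^{-\epsilon n/2},\qquad\text{hence}\quad \|qA_n\|\le e^{-\epsilon n/2}.
\]
Applying the nontrivial automorphism of $K$ to the displayed Möbius relation gives the companion identity $qA_n'-p=\pm(q\xi'+q^-)^{-1}$ with $\xi'=\xi_i(A_n)'$, which will be used to control $A_n-A_n'$ in case (b).

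Next I expand $A_n=\sum_\alpha p_\alpha(n)\alpha^n$ in power sum form. The roots $\alpha$ with $|\alpha|<1$ contribute, after multiplication by $q$, a quantity of absolute value $\le e^{o(n)}e^{-cn}\to 0$, which can be absorbed into the error term. Letting $\delta_1,\dots,\delta_r$ be the distinct roots of $P_{\underline{A}}$ with $|\delta_j|\ge 1$ and putting $q_j:=q\cdot p_{\delta_j}(n)$ (nonzero for $n$ large, of height $o(n)$), I obtain infinitely many tuples $(n,q_1,\dots,q_r)$ with $\bigl\|\sum_{j=1}^r q_j\delta_j^n\bigr\|\le e^{-c'n}$ and $\max_j h(q_j)=o(n)$, viewed inside a number field containing the $\delta_j$ and the coefficients of the $p_\alpha$. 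Since $\underline{A}$ is non-degenerate, $(\delta_1,\dots,\delta_r)$ is a non-degenerate tuple, so Theorem~\ref{kmn} applies. Under hypothesis (a), conclusion (i) of Theorem~\ref{kmn} forces every root of $P_{\underline{A}}$ of absolute value $\ge 1$ to be an algebraic integer, contradicting (a); this settles case (a).

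Under hypothesis (b) I would run the same scheme with the linear recurrence $(A_n-A_n')_n$ in place of $(A_n)_n$. This sequence is non-degenerate, because its roots lie among $\{\text{roots of }P_{\underline{A}}\}\cup\{\text{roots of }P_{\underline{A}'}\}$ and the latter set is non-degenerate precisely since $(A_n)_n$ is non-degenerate over $\Q$. A large $a_i(A_n)$ should, via the companion identity $qA_n'-p=\pm(q\xi'+q^-)^{-1}$, produce a sub-$\theta^n$ approximation of the power sum of $(A_n-A_n')_n$ once the size of the conjugate complete quotient $\xi'=\xi_i(A_n)'$ — equivalently of the discriminant over $\Q$ of the quadratic satisfied by $\xi_i(A_n)$ — is taken into account; feeding this into Theorem~\ref{kmn} and using conclusions (ii) and (iii) with the element $\sigma\in G_{\Q}$ carrying $P$ to $P'$ should contradict the assumption that both $P$ and $P'$ have a root of absolute value $\ge 1$. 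The main obstacle is exactly this last step: one has to ensure that a large partial quotient of $A_n$ truly forces a genuine sub-$\theta^n$ approximation to the power sum attached to $(A_n-A_n')_n$, which appears to require subdividing $\mathcal{N}$ according to the growth of $\xi_i(A_n)'$ (i.e.\ of the $\Q$-norm of $\xi_i(A_n)$) — in different ranges one approximates different power sums, e.g.\ $q_{i-1}(A_n)q_{i-2}(A_n)(A_n-A_n')$ in one and $q_{i-1}(A_n)A_n'$ in another — and then matching the index permutation produced by Theorem~\ref{kmn}(iii) with the conjugate pair $P,P'$ in the factorization of $P_{\underline{A}-\underline{A}'}$ over $K$.
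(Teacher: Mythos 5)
Your treatment of hypothesis (a) matches the paper's: both arguments take $i$ minimal with exponentially large partial quotients, get $\log q_{i-1}(A_n)=o(n)$ from Lemma~\ref{lemma1}, turn \eqref{eq1.1} into a sub-$\theta^n$ approximation of the power sum restricted to roots of absolute value $\ge 1$, and then contradict conclusion (i) of Theorem~\ref{kmn}. No issues there.

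For hypothesis (b) you diverge, and the gap you flag at the end is genuine and fatal to this route. You try to extract a \emph{second} Diophantine inequality from the conjugate Möbius identity $qA_n'-p=\pm(q\xi'+q^-)^{-1}$ so as to approximate a power sum attached to $(A_n-A_n')_n$. But that identity is only useful when $\xi'=\xi_i(A_n)'$ is exponentially large; when $\xi'$ is bounded — which is exactly what happens when $\xi_i(A_n)$ is reduced or nearly so, i.e.\ the typical situation inside a purely periodic tail — you get $|qA_n'-p|\asymp 1/q$, nowhere near $\theta^n$. Subdividing $\mathcal N$ by the size of $\xi'$ does not save you: in the range where $\xi'$ is bounded there is simply no small quantity to hand to Theorem~\ref{kmn}, and no choice of power sum ($q_{i-1}q_{i-2}(A_n-A_n')$, $q_{i-1}A_n'$, or any other) fixes that.

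The paper never introduces a second approximation. It keeps the single inequality
\[
\Bigl|\,q_{m-1}(A_n)\sum_{j=1}^{k_0}p_{\alpha_j}(n)\alpha_j^n-p_{m-1}(A_n)\Bigr|<\theta^n
\]
for $A_n$ itself, and instead exploits the \emph{Galois rigidity} that conclusions (ii)–(iii) of Theorem~\ref{kmn} impose on the coefficients $q_{m-1}(A_n)p_{\alpha_j}(n)$. The hypothesis that $P$ (a factor of $P_{\underline A-\underline A'}$) has a root $\alpha_0$ with $|\alpha_0|\ge 1$ is used only to show that $p_{\alpha_0}(n;\underline A)$ fails to be carried to the ``right'' conjugate coefficient by the $\sigma\in G_\Q$ restricting to conjugation on $K$: if $P=P'$ one takes $\sigma$ fixing $\alpha_0$ and contradicts (iii); if $P\ne P'$ with $P,P'\mid P_{\underline A}$ one takes $\sigma(\alpha_0)=\tilde\alpha_0$ a root of $P'$ and again contradicts (iii); if $P\ne P'$ with $P'\nmid P_{\underline A}$ then non-degeneracy over $\Q$ ensures $\sigma(\alpha_0)/\beta$ is never a root of unity for roots $\beta$ of $P_{\underline A}$, so conclusion (ii) forces $|\sigma(\alpha_0)|<1$, contradicting $|\tilde\alpha_0|\ge 1$. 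Your sketch never invokes (ii) or (iii), which is why you felt you needed a second inequality; I'd drop the conjugate-convergent idea and read Theorem~\ref{kmn} structurally as the paper does.
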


\begin{proof}
We prove the claim by contradiction. Let $m>0$ be the minimum index $i$ such that \eqref{eq2.1} does not hold. Then there exists  $\delta>0$ such that for all $n$ in an infinite set $\mathcal{B}'\subseteq \mathbb{N}$ of positive integers, we have  
\begin{equation}\label{eq2.2}
a_m(A_n)>e^{\delta n}.
\end{equation}
On the other hand, since $\displaystyle\lim_{\substack{n\to\infty}}\frac{\log a_i(A_n)}{n}=0$ for $i=1,\ldots,m-1$, by Lemma \ref{lemma1}, the following holds
\begin{equation}\label{eq:lim_qn_zero}
\lim_{\substack{n\to\infty}}\frac{\log q_{m-1}(A_n)}{n}=0.
\end{equation}
Note that this equation holds for trivial reasons if $m=1$. Recall that, we have
\[
A_n= \sum_{\substack{\alpha\in \C \\ P_{\underline{A}}(\alpha)=0}} p_\alpha(n)\alpha^n.
\]
In view of \eqref{eq1.1} and from \eqref{eq2.2}, we have 
$$
\left|q_{m-1}(A_n)\cdot\left(\sum_{\alpha \text{ root of } P_{\underline{A}}} p_\alpha(n)\alpha^n\right)-p_{m-1}(A_n)\right|<\frac{1}{a_m(A_n) q_{m-1}(A_n)}\leq \frac{1}{a_m(A_n)}<\left(\frac{1}{e^{\delta}}\right)^n
$$
for all sufficiently large values of $n\in\mathcal{B}'$. Let us label the roots of  $P_{\underline{A}}$ by $\alpha_1,\dots,\alpha_k$ in such a way that
\[
	|\alpha_1|\geq|\alpha_2|\geq \dots \geq |\alpha_k|.
\] 
Note that at least one root of $P_{\underline{A}}$ has absolute value $\geq 1$ by the hypotheses. Let us write $k_0$ for the maximal index such that $|\alpha_{k_0}|\geq 1$. We obtain
\[
	|\alpha_1|\geq|\alpha_2|\geq \dots \geq |\alpha_{k_0}|\geq 1 > |\alpha_{k_0+1}|\geq \dots \geq |\alpha_k|.
\]
For $\epsilon>0$ define $\theta:=\max(|\alpha_{k_0+1}|, e^{-\delta})^{(1-\epsilon)}$. By the triangle inequality, we have for $\epsilon>0$ and all sufficiently large $n\in \mathcal{B}'$:
\begin{equation}\label{eq:PowerSum-Approx}
\left|q_{m-1}(A_n)\cdot\left(\sum_{i=1}^{k_0} p_{\alpha_i}(n)\alpha_i^n\right)-p_{m-1}(A_n)\right|< \theta^{n}.
\end{equation}
Applying Theorem \ref{kmn} with $\delta_i=\alpha_i$ for $1\leq i\leq k_0$, and $q_i=q_{m-1}(A_n)p_{\alpha_i}(n)$ for $1\leq i\leq k_0$, gives a contradiction in each of the cases:\\
In the case of assumption $(a)$, we get a contradiction with conclusion $(i)$ of the Theorem \ref{kmn}.
\smallskip

In the case of assumption $(b)$, we will distinguish several cases. The characteristic polynomial of $(A_n-A_n')_n$ divides the product $P_{\underline{A}}P_{\underline{A}'}$, so after possibly interchanging $P$ by $P'$ we may without loss of generality assume that $P$ divides $P_{\underline{A}}$. Let us fix a root $\alpha_0$ of $P$ of absolute value $\geq 1$.\\

\textit{Case 1, ($P=P'$):} In this case, we have $\Q(\alpha)\cap K=\Q$. Hence, we can find a $\sigma\in G_\Q$ such that $\sigma|_K=(\cdot)'$ and $\sigma(\alpha)=\alpha$ for any root $\alpha$ of $P$. Therefore, we get
\begin{equation}\label{eq:AQ-AQprime}
	A_n-A^{\prime}_n=\sum_{\alpha \text{ root of } P_{\underline{A}}} p_{\alpha}(n)\alpha^n-p_{\alpha}^\sigma(n)\sigma(\alpha)^n,
\end{equation}
where $p_{\alpha}^\sigma$ is the polynomial obtained by applying $\sigma$ to the coefficients of $p_{\alpha}$. Since $\alpha_0$ is a root of $(A_n-A'_n)_n$, we deduce that $p_{\alpha_0}(n)\neq\sigma( p_{\alpha_0}(n))$ for infinitely many values of $n$. On the other hand, we have $\sigma(\alpha_0^n)=\alpha_0^n$ by the choice of $\sigma$. Note that $\alpha$ is a root of $P_{\underline{A}}$ and it is of absolute value $\geq 1$, hence it is one of the roots $\alpha_1,\dots,\alpha_{k_0}$ apperaring in equation \eqref{eq:PowerSum-Approx}. We get a contradiction to the conclusion $(iii)$ of the Theorem \ref{kmn}.

\smallskip 

\textit{Case 2, $(P\neq P')$, $P,P'\mid P_{\underline{A}}$:} 
Let $\tilde{\alpha}_0\in \C$ be a root of $P'$ of absolute value $\geq 1$. Since $P\neq P'$, the product $P\cdot P'$ is the minimal polynomial of $\alpha_0$ and $\tilde{\alpha}_0$ over $\Q$ and hence, there is an element $\sigma\in G_\Q$ with $\sigma(\alpha_0)=\tilde{\alpha}_0$. We have $\sigma^{-1}|_K=(\cdot)'$, and hence
\[
    A_n-A_n'=\sum_{\alpha \text{ root of } P_{\underline{A}}} p_{\alpha}(n)\alpha^n-\sigma^{-1}(p_{\alpha}(n))\sigma^{-1}(\alpha)^n.
\]
On the other hand, we can write the sequence $\underline{A}-\underline{A}'$ in its power sum form:
\[
    A_n-A_n'=\sum_{\alpha \text{ root of } P_{\underline{A}-\underline{A}'}} p_{\alpha}(n;\underline{A}-\underline{A}')\alpha^n.
\]
Comparing the two expressions for $A_n-A_n'$ shows
\[
    p_{\alpha_0}(n;\underline{A}-\underline{A}')=p_{\alpha_0}(n;\underline{A})-\sigma^{-1}(p_{\sigma(\alpha_0)}(n;\underline{A})).
\]
On the other hand, we have $p_{\alpha_0}(n;\underline{A}-\underline{A}')\neq 0$, since $\alpha_0$ is a root of $(A_n-A_n')_n$. This shows $\sigma(p_{\alpha_0}(n;\underline{A}))\neq p_{\sigma(\alpha_0)}(n;\underline{A})$. Since both $\alpha_0$ and $\tilde{\alpha}_0$ are roots of $A_n$, they are both among the roots $\alpha_1,\dots, \alpha_{k_0}$ of $(A_n)_n$ appearing in equation \eqref{eq:PowerSum-Approx}. We get a contradiction to the conclusion $(iii)$ of the Theorem \ref{kmn}.

\smallskip

\textit{Case 3, $(P\neq P')$, $P\mid P_{\underline{A}}$ and $P'\nmid P_{\underline{A}}$ :}
Let $\tilde{\alpha}_0\in \C$ be a root of $P'$ of absolute value $\geq 1$. Since $P\neq P'$, the product $P\cdot P'$ is the minimal polynomial of $\alpha_0$ and $\tilde{\alpha}_0$ over $\Q$ and hence, there is an element $\sigma\in G_\Q$ with $\sigma(\alpha_0)=\tilde{\alpha}_0$.
We want to apply criterion $(ii)$ of Theorem \ref{kmn}, we want to check whether there exists a root $\beta$ of $P_{\underline{A}}$ such that  $\frac{\sigma(\alpha_0)}{\beta}$ is a root of unity. Since we assumed that $\underline{A}$ is non-degenerate over $\Q$, this is equivalent to the question whether $\sigma(\alpha_0)$ is a root of $P_{\underline{A}}$. However, if $\sigma(\alpha_0)$ was a root of $P_{\underline{A}}$ then $P'$ would divide $P_{\underline{A}}$ contradicting the assumption in Case 3. Hence, we have shown that there is no root $\beta$ of $P_{\underline{A}}$ such that $\frac{\sigma(\alpha_0)}{\beta}$ is a root of unity.
Conclusion $(ii)$ of the Theorem \ref{kmn} would imply that $|\sigma(\alpha_0)|<1$ in contradiction with the assumption $|\sigma(\alpha_0)|=|\tilde{\alpha}_0|\geq 1$.
\smallskip
\end{proof}

In hypothesis $(b)$ of Lemma \ref{lemma2}, we made the assumption that $(A_n)_n$ is non-degenerate over $\Q$. This assumption is necessary as the following example shows: 
\begin{example}
Take $A_n=\sqrt{2}^n + (1+\sqrt{2})^n$. This sequence is non-degenerate over $K$ but degenerate over $\Q$ since the quotient of the root $\sqrt{2}$ and its conjugate $-\sqrt{2}$ is a non-trivial root of unity. By \cite[Theorem 2]{corv}, it is easily checked that
\[
    A_{2n}=[a_0(A_{2n}); \overline{a_1(A_{2n}),a_2(A_{2n})} ]
\]
has period length $2$ with $a_1(A_{2n})=1$ and $a_2(A_{2n})=\lfloor (1+\sqrt{2})^{2n} \rfloor-1$. In particular,
\[
    \lim_{\substack{n\to\infty}}\frac{\log a_2(A_{n})}{n}\neq 0.
\]
\end{example}
However, we can still weaken the assumption `non-degenerate over $\Q$' by passing to a suitable subsequence:
\begin{corollary}\label{cor:lemma2}
    Let $(A_n)_n$ be a non-degenerate linear recurrence sequence in $K$. Suppose that there is a monic irreducible factor $P$ of $P_{\underline{A}-\underline{A}'}$ such that both $P$ and $P'$ have a root of absolute value $\geq 1$. Then there exist a positive integer $d$ and $0\leq j< d$ such that for all $i\geq 1$
    \[
        \lim_{\substack{n\to\infty}}\frac{\log a_i(A_{dn+j})}{n}= 0.
    \]
\end{corollary}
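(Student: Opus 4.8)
The plan is to deduce \corref{cor:lemma2} from hypothesis~$(b)$ of \lemref{lemma2} by passing to a suitable arithmetic subsequence $B_n:=A_{dn+j}$. Two things have to be arranged at once: $(B_n)_n$ must be non-degenerate over $\Q$, and some monic irreducible factor of $P_{\underline{B}-\underline{B}'}$ must, together with its conjugate, have a root of absolute value $\ge 1$. I would pin down $d$ first, using only the roots of $P_{\underline{A}}$, and only afterwards select $j$, so that there is no circularity.

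First I would fix $d$. Let $\alpha_1,\dots,\alpha_k$ be the roots of $P_{\underline{A}}$ and let $T\subseteq\overline{\Q}$ be the finite set of all conjugates over $\Q$ of $\alpha_1,\dots,\alpha_k$. Only finitely many of the quotients $t/\alpha_l$, $t\in T$, $1\le l\le k$, are roots of unity, and each such root of unity has finite order; let $M$ be their least common multiple (set $M=1$ if there are none) and let $d$ be any positive multiple of $M$. For every residue $j$, the roots of $P_{\underline{B}}$ lie in $\{\alpha_1^d,\dots,\alpha_k^d\}$. Since $(A_n)_n$ is non-degenerate, two roots $\alpha_i^d,\alpha_l^d$ with $\alpha_i^d/\alpha_l^d$ a root of unity are already equal, so $(B_n)_n$ is non-degenerate; and for $\sigma\in G_\Q$ we have $\sigma(\alpha_i^d)/\alpha_l^d=(\sigma(\alpha_i)/\alpha_l)^d$ with $\sigma(\alpha_i)\in T$, so by the choice of $d$ this equals $1$ whenever it is a root of unity, i.e.\ $(B_n)_n$ is non-degenerate over $\Q$. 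Both properties hold for every $j$.

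Next I would choose $j$. Fix a monic irreducible factor $P\mid P_{\underline{A}-\underline{A}'}$ as in the hypothesis, a root $\gamma_0$ of $P$ with $|\gamma_0|\ge 1$ and a root $\gamma_1$ of $P'$ with $|\gamma_1|\ge 1$. Write $A_n-A'_n=\sum_\gamma q_\gamma(n)\gamma^n$ in power sum form, the sum over the roots $\gamma$ of $P_{\underline{A}-\underline{A}'}$, each $q_\gamma\not\equiv 0$. Then $A_{dn+j}-A'_{dn+j}=\sum_\delta\bigl(\sum_{\gamma^d=\delta}q_\gamma(dn+j)\gamma^j\bigr)\delta^n$, so $\delta$ is a root of $P_{\underline{B}-\underline{B}'}$ exactly when its inner coefficient is not identically zero in $n$. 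Take $\delta=\gamma_0^d$: the sequence $m\mapsto\sum_{\gamma^d=\gamma_0^d}q_\gamma(m)\gamma^m$ is a linear recurrence in power sum form with non-zero coefficients (the $\gamma$ with $\gamma^d=\gamma_0^d$ being pairwise distinct), hence not the zero sequence; choosing $m$ with this value non-zero and letting $j$ be the residue of $m$ modulo $d$ forces $\gamma_0^d$ to be a root of $P_{\underline{B}-\underline{B}'}$. Let $\widetilde{P}$ be the monic minimal polynomial of $\gamma_0^d$ over $K$. Since $\gamma_0^d$ is a root of $P_{\underline{B}-\underline{B}'}\in K[X]$, we get $\widetilde{P}\mid P_{\underline{B}-\underline{B}'}$, and $\widetilde{P}$ has the root $\gamma_0^d$ with $|\gamma_0^d|=|\gamma_0|^d\ge 1$. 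Finally, $\widetilde{P}'$ is the monic minimal polynomial over $K$ of $\tau(\gamma_0)^d$ for any automorphism $\tau$ of $\overline{\Q}$ with $\tau|_K=(\cdot)'$; since the roots of $P'$ are the $\tau$-images of the roots of $P$ and $P$ is irreducible over $K$, one may pick such a $\tau$ with $\tau(\gamma_0)=\gamma_1$, whence $\gamma_1^d$ is a root of $\widetilde{P}'$ with $|\gamma_1^d|\ge 1$. Thus hypothesis~$(b)$ of \lemref{lemma2} applies to $(B_n)_n=(A_{dn+j})_n$, and its conclusion is exactly $\lim_{n\to\infty}\log a_i(A_{dn+j})/n=0$ for all $i\ge 1$.

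The step I expect to be the main obstacle is the one just used: collapsing roots with a common $d$-th power when forming the subsequence could, a priori, make the coefficient attached to $\gamma_0^d$ vanish identically, so that $\gamma_0^d$ would not occur in $P_{\underline{B}-\underline{B}'}$ for any $j$. The resolution is to observe that this coefficient, read along one residue class modulo $d$, is precisely the power sum $\sum_{\gamma^d=\gamma_0^d}q_\gamma(m)\gamma^m$, which is a non-zero sequence. This is consistent with the preceding example $A_n=\sqrt{2}^n+(1+\sqrt{2})^n$: there the $\sqrt{2}$- and $(-\sqrt{2})$-contributions cancel in $A_{2n}-A'_{2n}$, so $j=0$ is bad, while for $j=1$ the root $2$ survives and $\widetilde{P}=\widetilde{P}'=X-2$ satisfies the requirement.
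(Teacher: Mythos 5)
Your proof is correct and follows the same strategy as the paper: pass to a residue class $(A_{dn+j})_n$ with $d$ chosen so that every such subsequence is non-degenerate over $\Q$, show that $\gamma_0^d$ survives as a root of $P_{\underline{B}-\underline{B}'}$ for some $j$, and then apply hypothesis $(b)$ of \lemref{lemma2} to the minimal polynomial $\widetilde{P}$ of $\gamma_0^d$. Your justification of the survival step---via the non-vanishing of the power sum $m\mapsto\sum_{\gamma^d=\gamma_0^d}q_\gamma(m)\gamma^m$ and the identity $\gamma^{dn+j}=(\gamma_0^d)^n\gamma^j$---is a clean and fully rigorous way of stating what the paper compresses into a Vandermonde-rank remark, and your explicit construction of $d$ and of the automorphism $\tau$ sending $\gamma_0$ to $\gamma_1$ supplies details the paper leaves implicit.
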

\begin{proof}
    Let $d$ be a positive integer such that each of the sequences $\underline{A}_{d,j}:=(A_{dn+j})_n$ for $0\leq j<d$ is non-degenerate over $\Q$. We write $\underline{A}-\underline{A}'$ in its power sum form
    \[
    A_n-A_n'=\sum_{\alpha} p_{\alpha}(n;A_{n}-A_{n}')\alpha^n.
    \]
    In particular, we have
    \[
        A_{dn+j}-A_{dn+j}'=\sum_{\alpha} p_{\alpha}(dn+j;\underline{A}-\underline{A}')\alpha^j (\alpha^d)^n.
    \]
    Let $P\in K[X]$ be a monic irreducible divisor of $P_{\underline{A}-\underline{A}'}$ such that both $P$ and $P'$ have a root of absolute value $\geq1$. We write $\alpha_0$ for a root of $P$ of absolute value $\geq 1$. We claim that $\alpha_0^d$ is a root of at least one of the sequences $\underline{A}_{d,j}$ with $0\leq j<d$. Let us write $I:=\{\alpha\in \C: P_{\underline{A}-\underline{A}'}(\alpha)=0 \text{ and }\alpha^d=\alpha_0^d\}$ for the roots $\alpha$ of $\underline{A}-\underline{A}'$ with $d$-th power equal to $\alpha_0^d$. The (possibly vanishing) coefficient of $(\alpha_0^d)^n$ in the power sum expansion of $\underline{A}_{d,j}-\underline{A}_{d,j}'$ is
    \[
        \sum_{\alpha\in I} p_{\alpha}(dn+j;\underline{A}-\underline{A}')\alpha^j.
    \]
    We define the polynomial 
    \[
        p_{\alpha_0^d}(X;\underline{A}_{d,j}-\underline{A}_{d,j}'):=\sum_{\alpha\in I} p_{\alpha}(dX+j;\underline{A}-\underline{A}')\alpha^j.
    \]
    The Vandermonde matrix $(\alpha^j)_{\alpha\in I, 0\leq j<d}$ has full rank and $p_{\alpha}(dX+j;\underline{A}-\underline{A}')$ is a non-zero polynomial for each $\alpha\in I$, hence $p_{\alpha_0^d}(X,\underline{A}_{d,j}-\underline{A}_{d,j}')\neq 0$ for at least one $0\leq j<d$. Since $\alpha_0^d$ is a root of $\underline{A}_{d,j}$ if and only if $p_{\alpha_0^d}(X,\underline{A}_{d,j}-\underline{A}_{d,j}')\neq 0$, we conclude that $\alpha_0^d$ appears as a root of at least one of the sequences $\underline{A}_{d,j}-\underline{A}_{d,j}'$. Fix now one index $j\in\{0,\dots,d-1\}$ such that $\alpha_0^d$ is a root of $\underline{A}_{d,j}-\underline{A}_{d,j}'$. The sequence $\underline{A}_{d,j}$ is non-degenerate over $\Q$ and the minimal polynomial $\tilde{P}$ of $\alpha_0^d$ over $K$ is a monic irreducible polynomial dividing $P_{\underline{A}_{d,j}-\underline{A}_{d,j}'}$ such that both $\tilde{P}$ and $\tilde{P}'$ have roots of absolute value $\geq 1$. We can now apply Lemma \ref{lemma2} to conclude.
\end{proof}

\section{Proof of the main result}
In this section, we will give the proof of our Main Theorem, namely Theorem \ref{thm:A}.
As a preparation, let us make first some basic observations. 
The following lemma implies that sequences satisfying condition $(b)$ of Theorem \ref{thm:A} have bounded period length:
\begin{lemma}\label{lem:bounded-denominators}
    Let $(A_n)_n$ be a sequence with values in $K$ and $(B_n)_n$ be a sequence of rational numbers with bounded denominators, then $(\ell(A_n))_n$ is bounded if and only if $(\ell(A_n+B_n))_n$ is bounded.
\end{lemma}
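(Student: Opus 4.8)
The plan is to reduce the statement to the already-established Lemma \ref{lem:periodlength-mult-by-int} together with the estimate \eqref{eq4} on period lengths under M\"obius transformations coming from \cite{rada}. First, since $(B_n)_n$ has bounded denominators, there is a positive integer $D$ with $DB_n\in\Z$ for all $n$. By Lemma \ref{lem:periodlength-mult-by-int}, the sequence $(\ell(A_n))_n$ is bounded if and only if $(\ell(DA_n))_n$ is bounded, and likewise $(\ell(A_n+B_n))_n$ is bounded if and only if $(\ell(DA_n+DB_n))_n$ is bounded. Hence it suffices to treat the case in which $(B_n)_n$ is a sequence of integers, i.e. to show that $(\ell(A_n))_n$ is bounded iff $(\ell(A_n+b_n))_n$ is bounded for $b_n\in\Z$.

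For the integer case, the key point is that adding an integer $b_n$ to $A_n$ corresponds to the M\"obius transformation $h_{N}$ with matrix $N=\begin{pmatrix} 1 & b_n\\ 0 & 1\end{pmatrix}\in M_2(\Z)$, which is unimodular. The subtlety is that the constant $S_N$ in \eqref{eq4} a priori depends on $N$, hence on $n$, so one cannot directly invoke \eqref{eq4} for a varying family. I would instead use the elementary and more classical fact that adding an integer to a real number changes only the zeroth partial quotient: if $\alpha=[a_0;a_1,a_2,\dots]$ then $\alpha+b=[a_0+b;a_1,a_2,\dots]$ for any $b\in\Z$. Since the continued fraction expansion of a quadratic irrational is eventually periodic and the period involves only the partial quotients $a_i$ with $i\ge 1$, the period length satisfies $\ell(\alpha+b)=\ell(\alpha)$ for every $b\in\Z$ (and if $\alpha$ is rational so is $\alpha+b$, giving $\ell=0$ on both sides). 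Applying this with $\alpha=A_n$ and $b=b_n$ gives $\ell(A_n+b_n)=\ell(A_n)$ for all $n$, so one boundedness statement holds iff the other does.

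Combining the two steps: $(\ell(A_n+B_n))_n$ bounded $\iff$ $(\ell(DA_n+DB_n))_n$ bounded $\iff$ $(\ell(DA_n))_n$ bounded (using $DB_n\in\Z$ and the integer case applied to the sequence $(DA_n)_n$) $\iff$ $(\ell(A_n))_n$ bounded. The only place requiring a little care is the second step, where one must be sure that the invariance $\ell(\alpha+b)=\ell(\alpha)$ is applied with the \emph{fixed} integers $DB_n$ but allowing them to vary with $n$; since the identity is exact for each individual $n$, no uniformity is needed and there is no real obstacle. Alternatively, one could phrase this last step via \eqref{eq4} with the single matrix $N=\begin{pmatrix} D & 0\\ 0 & 1\end{pmatrix}$ as in the proof of Lemma \ref{lem:periodlength-mult-by-int}, which is exactly how that lemma was proved, so no new ingredient is introduced.
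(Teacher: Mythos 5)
Your proposal is correct and takes essentially the same approach as the paper: both multiply by an integer $D$ clearing the denominators of $B_n$, invoke Lemma~\ref{lem:periodlength-mult-by-int} to absorb this multiplication, and conclude from $\ell(D(A_n+B_n))=\ell(DA_n)$. Your write-up additionally spells out why this last equality holds (adding an integer changes only the zeroth partial quotient) and correctly notes that one should not invoke \eqref{eq4} with the $n$-dependent matrix $\begin{pmatrix}1 & b_n\\ 0 & 1\end{pmatrix}$ since $S_N$ depends on $N$, points the paper leaves implicit.
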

\begin{proof}
    Let $D$ be a positive integer such that $DB_n\in \Z$ for all $n\in \Z$. We know from Proposition \ref{prop:integer} that the boundedness of the period length is not affected by multiplication by $D$. Now the claim follows from
    \[
        \ell(D (A_n+B_n))=\ell( D A_n).
    \]
\end{proof}
Before we start the proof of the main theorem, we introduce some notation which will be convenient structuring the proof: Let $P\in K[X]$ be a polynomial with factorization into (normalized) irreducible factors $\pi\in K[X]$:
\[
	P=\prod_{\pi\in K[X] \text{ irred.}}\pi^{n_\pi}.
\]
We define
\[
	P^{\Q}:=\prod_{\substack{\pi\in K[X] \text{ irred.}\\ \text{s.t. } \pi|P \text{ and }\pi=\pi'}}\pi^{n_\pi}, \quad P^{K}:=\prod_{\substack{\pi\in K[X] \text{ irred.}\\ \text{s.t. } \pi|P \text{ and }\pi\neq \pi'}}\pi^{n_\pi}.
\]
 In particular, this allows us to factor the characteristic polynomial of a linear recurrence sequence $\underline{A}$:
\[
    P_{\underline{A}}=P^\Q_{\underline{A}}\cdot P^K_{\underline{A}}.
\]
Let us write $\underline{A}$ in its power sum form:
\[
	A_n=\sum_{\alpha \text{ root of } P_{\underline{A}}} p_{\alpha}(n;\underline{A})\alpha^n.
\]
For a linear recurrence $\underline{A}=(A_n)_n$ in $K$, this gives a decomposition $\underline{A}=\underline{A}^{\Q}+\underline{A}^{K}$ of linear recurrence sequences
\[
	A_n=A_n^{\Q}+A_n^{K},
\]
with
\[
	A_n^{\Q}:= \sum_{\alpha \text{ root of } P_{\underline{A}}^{\Q}} p_\alpha(n;\underline{A})\alpha^n, \quad A_n^{K}:= \sum_{\alpha \text{ root of } P_{\underline{A}}^{K}} p_\alpha(n;\underline{A})\alpha^n.
\]

\begin{lemma}\label{lemma3}
Let $(A_n)_n$ be a non-degenerate linear recurrence in $K$ such that $(A^K_n)_n$ is non-zero. The set of roots of $(A^K_n)_n$ does not contain any roots of unity.
\end{lemma}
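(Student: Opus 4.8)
The plan is to argue by contradiction, the two ingredients being the non-degeneracy of $\underline{A}$ and the elementary fact that a real field contains no root of unity other than $\pm 1$. Suppose some root of unity $\zeta$ occurs among the roots of $(A^K_n)_n$. The roots of $(A^K_n)_n$ lie among the roots of $P^K_{\underline{A}}$, so, unwinding the definition of $P^K_{\underline{A}}$, the monic irreducible factor $\pi\in K[X]$ of $P_{\underline{A}}$ with $\pi(\zeta)=0$ must satisfy $\pi\neq\pi'$; note that $\pi$ is the minimal polynomial of $\zeta$ over $K$. Writing $\zeta$ as a primitive $m$-th root of unity, $\pi$ divides the cyclotomic polynomial $\Phi_m\in\Q[X]\subseteq K[X]$ in $K[X]$, hence $\pi$ is separable and all of its roots are primitive $m$-th roots of unity; in particular every root of $\pi$ is again a root of $P_{\underline{A}}$, i.e. a root of $\underline{A}$.

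Next I would split on $\deg\pi$. If $\deg\pi\geq 2$, choose two distinct roots $\zeta_1\neq\zeta_2$ of $\pi$; they are both roots of $\underline{A}$, while $\zeta_1/\zeta_2$ is a root of unity, contradicting the non-degeneracy of $\underline{A}$. If $\deg\pi=1$, then $\zeta\in K\subseteq\R$, which forces $\zeta\in\{1,-1\}$, so $\pi=X-\zeta\in\Q[X]$ and therefore $\pi'=\pi$, contradicting $\pi\neq\pi'$. In both cases we reach a contradiction, which proves the lemma.

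I do not expect a genuine obstacle here: the argument is short, the crux being that the defining condition $\pi\neq\pi'$ already rules out $\pi$ having a single rational root $\pm 1$, whereas a factor of a cyclotomic polynomial of degree $\geq 2$ has several root-of-unity roots and so is forbidden by non-degeneracy. The only points deserving a little care are reading off both $\pi\neq\pi'$ and $\pi\mid P_{\underline{A}}$ correctly from the definition of $P^K_{\underline{A}}$, and observing that a root of the sequence $(A^K_n)_n$ is in particular a root of $\underline{A}$, so that the non-degeneracy hypothesis on $\underline{A}$ really applies.
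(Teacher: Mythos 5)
Your proof is correct and takes essentially the same approach as the paper: you split on whether $\zeta=\pm 1$ (equivalently $\deg\pi=1$) or $\zeta$ has order $\geq 3$ (equivalently $\deg\pi\geq 2$), eliminate the first by observing $\pi\in\Q[X]$ forces $\pi=\pi'$, and eliminate the second via the non-degeneracy of $\underline{A}$ applied to two distinct conjugate roots of unity.
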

\begin{proof}
The roots of $P_{\underline{A}}^K$ are a subset of the roots of $(A_n)_n$. Since $(A_n)_n$ is non-degenerate, we conclude that  $(A^K_n)_n$ is also non-degenerate. Hence, the quotient of two distinct roots of $P_{\underline{A}}^K$ is never a root of unity. If $\zeta$ is a root of unity of order $\geq 3$ then $K(\zeta)\neq K$. Hence, if a root of unity $\zeta$ of order $\geq 3$ was a root of $P_{\underline{A}}^K$, we would get a contradiction to the non-degenerateness of $P_{\underline{A}}^K$. It remains to exclude the case $\pm 1$, however in this case, the minimal polynomial $(X\pm 1)$ has coefficients in $\Q$, so $\pm 1$ can never be roots of the characteristic polynomial of $(A^K_n)_n$.
\end{proof}

\begin{proof}[Proof of Theorem \ref{thm:A}]
Let us write $\mathrm{LinRec}_K\subseteq K^\Z$ for the set of non-degenerate recurrence sequences in $K$. Let us write $\mathbf{a},\mathbf{b},\mathbf{c}\subseteq \mathrm{LenRec}_K$ for the three subsets of $\mathrm{LinRec}_K$ corresponding to the cases $(a)$, $(b)$ and $(c)$ in Theorem \ref{thm:A}. More precisely, we define:
\begin{align*}
    \mathbf{a}&:=\mathrm{LinRec}_K\cap \Q^\Z,\\ 
    \mathbf{b}&:=\{\underline{A}\in \mathrm{LinRec}_K: P_{\underline{A}-\underline{A}'}=(X\pm 1)\text{, and }\underline{A}+\underline{A}' \text{ is unital }\},\\ 
    \mathbf{c}&:=\left\{\underline{A}\in \mathrm{LinRec}_K:\begin{matrix} P_{\underline{A}-\underline{A}'}\neq 0 \text{ is unital Pisot, all roots of } \underline{A}+\underline{A}' \text{ of absolute value $\geq 1$ are algebraic integers, }\\
    \text{and all roots of $A+A'$ of absolute value $\leq 1$ are reciprocals of algebraic integers}
    \end{matrix}\right\}. 
\end{align*}

Let us first observe that the sets $\mathbf{a}$, $\mathbf{b}$ and $\mathbf{c}$ are pairwise disjoint: we have $\mathbf{a}\subseteq \Q^\Z$, while $\mathbf{b}\cap \Q^\Z=\mathbf{c}\cap\Q^\Z=\emptyset$. This shows $\mathbf{a}\cap\mathbf{b}=\emptyset$ and $\mathbf{a}\cap\mathbf{c}=\emptyset$. We will prove that $\mathbf{c}$ and $\mathbf{b}$ are disjoint by looking at the characteristic polynomial $P_{\underline{A}-\underline{A'}}$ of $\underline{A}-\underline{A}'$: for $\underline{A}\in \mathbf{b}$, we have $P_{\underline{A}-\underline{A'}}=(X\pm 1)$. For $\underline{A}\in \mathbf{c}$, the polynomial $P_{\underline{A}-\underline{A'}}$ is unital Pisot. Note that the polynomial $(X\pm 1)$ is not unital Pisot. This shows $\mathbf{b}\cap\mathbf{c}=\emptyset$.
\bigskip

Let us now define the following subsets of $\mathrm{LinRec_K}$:
\begin{align*}
    \mathbf{A}&:=\{ \underline{A}\in \mathrm{LinRec}_K: \underline{A}-\underline{A}^{\prime}= 0 \}=\mathrm{LinRec}_K\cap \Q^\Z,\\
    \mathbf{B}&:=\{ \underline{A}\in \mathrm{LinRec}_K: \underline{A}^{\Q}-\underline{A}^{\Q,\prime}\neq 0 \},\\
    \mathbf{C}&:=\{ \underline{A}\in \mathrm{LinRec}_K: \underline{A}^{\Q}-\underline{A}^{\Q,\prime}=0 \text{ and } \underline{A}^{K}-\underline{A}^{K,\prime}\neq 0 \}.
\end{align*}

\smallskip

\textit{Claim:} The sets $\mathbf{A}$, $\mathbf{B}$ and $\mathbf{C}$ are disjoint, we have $\mathbf{A}\cup \mathbf{B}\cup \mathbf{C}=\mathrm{LinRec}_K$ and the inclusions
\[
\mathbf{a}\subseteq \mathbf{A}, \quad \mathbf{b}\subseteq \mathbf{B},\quad \mathbf{c}\subseteq \mathbf{C}.
\]
\textit{Proof of the Claim:}
It is easily seen that we have a disjoint union $\mathbf{A}\cup \mathbf{B}\cup \mathbf{C}=\mathrm{LinRec}_K$ and that $\mathbf{a}=\mathbf{A}$. It remains to show $\mathbf{b}\subseteq \mathbf{B}$ and $\mathbf{c}\subseteq \mathbf{C}$. For a sequence $\underline{A}\in \mathbf{b}$, we have  $\underline{A}-\underline{A}'=\underline{A}^\Q-\underline{A}^{\Q,\prime}$ since the only root of $\underline{A}-\underline{A}'$ is $\pm 1$. This implies $\underline{A}^\Q-\underline{A}^{\Q,\prime}\neq 0$. We deduce $\mathbf{b}\subseteq \mathbf{B}$. Finally, let $\underline{A}\in \mathbf{c}$. The sequence $\underline{A}-\underline{A}'$ is non-zero and unital Pisot. Note that any unital Pisot polynomial $P$ has the property $P=P^K$ and $P^\Q=1$. Applying this to the characteristic polynomial of $\underline{A}-\underline{A}'$ shows 
\[
    \underline{A}-\underline{A}'=\underline{A}^{K}-\underline{A}^{K,\prime},\quad \underline{A}^{\Q}-\underline{A}^{\Q,\prime}=0. 
\]
This implies $\mathbf{c}\subseteq \mathbf{C}$ and concludes the proof of the Claim.\\
\smallskip

The strategy of the proof is now as follows. By the above claim it suffices to show that all sequences of the set
\[
    (\mathbf{A}\setminus \mathbf{a})\cup (\mathbf{B}\setminus \mathbf{b})\cup (\mathbf{C}\setminus \mathbf{c})
\]
have unbounded period length. We will treat the three cases $\mathbf{A}$, $\mathbf{B}$ and $\mathbf{C}$ separately.\\

\smallskip

\textit{Case $\mathbf{A}$:} The set $\mathbf{A}\setminus \mathbf{a}$ is empty, so every sequence in this set has unbounded period length for trivial reasons.\\

\smallskip

\textit{Case $\mathbf{B}$:} Let $\underline{A}=(A_n)_n\in \mathbf{B}$, i.e. $A^\Q_n-A_n^{\Q,\prime}\neq 0$. Our goal is to show that every sequence in $\mathbf{B}\setminus \mathbf{b}$ has unbounded period length. We will prove this in several steps:\\
\smallskip

\textit{Step \textbf{B.1}: If $(A^\Q_n-A^{\Q,\prime}_n)_n$ has a root of absolute value $\neq 1$, then the period length of $(A_n)_n$ is unbounded.} After possibly replacing $(A_n)_n$ by $(A_{-n})_n$, we may assume that  $(A^\Q_n-A^{\Q,\prime}_n)_n$ has a root $\alpha$ with $|\alpha|>1$. Let $P$ be a minimal polynomial of $\alpha$. By the definition of $A^\Q_n$, we have $P=P'$, which in turns implies that  both of $P$ and $P'$ have  the root of  absolute value $>1$. Now, part (b) of Lemma \ref{lemma2} implies that
\[
	\lim_{n\to \infty} \frac{\log a_i(A_n)}{n}=0
\]
for all $i>0$. By Theorem \ref{EPS}, we have
$$
\lim_{n\to\infty}\frac{\log |A_n-A'_n|}{n}>0.
$$
Hence, all the hypotheses of Proposition \ref{prop1p} are satisfied. Now,  part $(a)$ of Proposition \ref{prop1p} implies that the period length is unbounded. This concludes the proof of Step \textbf{B.1}.\\
\smallskip

\textit{Step \textbf{B.2}: If the characteristic polynomial of $(A_n-A_n^{\prime})_n$ is not of the form $(X\pm 1)^m$ then the period length is unbounded.} By Step \textbf{B.1}, the claim follows if  $(A^\Q_n-A^{\Q,\prime}_n)_n$ has a root of absolute value $\neq 1$. Hence, we may assume in the proof of Step \textbf{B.2} that all roots of $(A^\Q_n-A^{\Q,\prime}_n)_n$ have absolute value $1$. Let $\alpha$ be a root of $(A^\Q_n-A^{\Q,\prime}_n)_n$, then its minimal polynomial $P$ divides the characteristic polynomial of $(A_n-A_n')_n$ and satisfies $P=P'$. In particular, $P$ and $P'$ do both have roots of absolute value $1$. Let $D$ be a positive integer, which will be chosen later. Since $(A_n-A_n')_n$ and $(D(A_n-A_n'))_n$ have the same characteristic polynomial, Lemma \ref{lemma2} implies that
\[
	\lim_{n\to \infty} \frac{\log a_i(D A_n)}{n}=0
\]
for all $i>0$. Suppose that $(A_n-A_n')_n$ has a root $\alpha$ which is not a root of unity. Hence, we can find a place $v$ with $|\alpha|_v>1$. Theorem \ref{EPS} implies that 
\[
	\lim_{n\to \infty} \frac{\log|D(A_n-A_n')|_v}{n}>0.
\]
Note that all roots of $(A^\Q_n-A_n^{\Q,\prime})_n$ are also roots of $(A_n-A_n^{\prime})_n$ and hence of $(D(A_n-A'_n))$. +In particular, $(A_n-A_n')_n$ has a root of absolute value $1$.After possibly replacing $(D A_n)_n$ by a subsequence and an appropriate choice of $D$, Corollary \ref{cor:lower-bound} implies that 
\[
    |A_n-A_n'|\geq 2
\]
for all $n\geq 0$. 
 Hence, all the hypotheses  of Proposition \ref{prop1p} are satisfied. Now Proposition \ref{prop1p} and Lemma \ref{lem:periodlength-mult-by-int} implies that the period length is unbounded if $(A_n-A_n')_n$ has a root which is not a root of unity. It remains to study the case where all roots of $(A_n-A_n')_n$ are roots of unity. By non-degeneratedness, we see that the characteristic polynomial of $(A_n-A_n')_n$ for a sequence $(A_n)_n$ with bounded period length has to be of the form $(X-1)^m$ or $(X+1)^m$ for some $m\geq 1$. \\

\textit{Step \textbf{B.3}: If $(A_n+A_n')_n$ is not unital, then the period length is unbounded.} By Step \textbf{B.2}, we may assume that $\pm 1$ is the only root of the characteristic polynomial of $(A_n-A_n')_n$. In particular, for any given positive integer $D$ (chosen later), we have by Lemma \ref{lemma2}:
\[
	\lim_{n\to \infty} \frac{\log a_i(D A_n)}{n}=0 \quad \text{ and }\lim_{n\to \infty} \frac{\log a_i(D A_{-n})}{n}=0.
\]
 By the assumptions of Step \textbf{B.3}, we find a non-Archimedean place $v$ and a root $\alpha$ of $(A_n+A_n')_n$ such that $|\alpha|_v\neq 1$. After possibly replacing $(A_n)_n$ by $(A_{-n})_n$, we may assume $|\alpha|_v>1$. Since $\alpha$ is also a root of $(A_n)_n$ or $(A_n')_n$, it is also a root of $(D A_n)_n$ or $(DA_n')_n$, we deduce
\[
	\lim_n \frac{\log|D A_n|_v}{n}>0 \text{ or }\lim_n \frac{\log|D A_n'|_v}{n}>0.
\] 
After possibly replacing $(D A_n)_n$ by a subsequence, and an appropriate choice of $D$, by  Corollary \ref{cor:lower-bound} we get 
\[
    |D(A_n-A_n')|\geq 2
\]
for all $n\geq 0$. We can now apply Proposition \ref{prop1p}  together with Lemma \ref{lem:periodlength-mult-by-int} to deduce that the sequence $(\ell(A_n))$ of period lengths is unbounded.\\
\smallskip

\textit{Step \textbf{B.4}: If the characteristic polynomial of $(A_n-A_n')_n$ is $(X\pm 1)^m$ with $m\geq 2$ and  $(A_n+A_n')_n$ is unital, then the period length is unbounded.} We write
\[
    A_n=\frac{1}{2}(A_n+A_n')+\frac{1}{2}(A_n-A_n')
\]
and observe that $B_n:=\frac{1}{2}(A_n+A_n')$ is a sequence of rational numbers with bounded denominators by Corollary \ref{cor:nonArch-bounded-units}. Hence, Lemma \ref{lem:bounded-denominators} allows us to reduce to the case $\frac{1}{2}(A_n+A_n')=0$. In this case, the linear recurrence is of the form
\[
	A_n=(\pm 1)^n \sqrt{d_K} P(n),
\]
with $P\in \Q[X]$ with $\deg(P)=m-1$, and $d_K$ the discriminant of the real quadratic field $K$. Now Theorem \ref{schinzel} with $f(X)=d_K\cdot P^2$ implies that the period length of $A_n$ is unbounded.\\
\smallskip

Combining Steps \textbf{B.2}, \textbf{B.3} and \textbf{B.4} shows that every sequence in $\mathbf{B}\setminus \mathbf{b}$ has unbounded period length: Indeed, if $(A_n)_n\in \mathbf{B}\setminus \mathbf{b}$, then either $(A_n+A_n')_n$ is not unital or it is unital but the characteristic polynomial of $(A_n-A_n')_n$ is not of the form $(X\pm 1)$. In the first case, the period length is unbounded by Step \textbf{B.3}, in the second case it is unbounded by Steps \textbf{B.2} and \textbf{B.4}.\\
\smallskip

\textit{Case $\mathbf{C}$:} Let $\underline{A}=(A_n)_n\in \mathbf{C}$, i.e. $A^\Q_n-A_n^{\Q,\prime}= 0$ and $A_n^K-A_n^{K,\prime}\neq 0$. Our goal is to show that every sequence in $\mathbf{C}\setminus \mathbf{c}$ has unbounded period length. We will prove this in several steps:\\

\smallskip

\textit{Step \textbf{C.1}: If there is an irreducible factor $\pi\neq \pi'$ of the characteristic polynomial of $(A_n-A_n')_n$ such that both $\pi$ and $\pi'$ have a root of absolute value $\geq 1$, then the period length is unbounded.} After replacing $(A_n)_n$ by a suitable subsequence of the form $(A_{dn+j})_n$, Corollary \ref{cor:lemma2} implies
\[
	\lim_{n\to \infty} \frac{\log a_i(A_n)}{n}=0.
\]
If there is a root of  $(A_n-A'_n)_n$ of absolute value $>1$, then $(\ell(A_n))_n$ is unbounded by Proposition \ref{prop1p} and Theorem \ref{EPS}. So we may assume that all roots of $(A_n-A_n')_n$ have absolute value $\leq 1$. In this case, we can always find a non-Archimedean place $v$ and a root $\alpha$ of $(A_n-A'_n)_n$ such that $|\alpha|_v>1$. Indeed, suppose we had $|\alpha|_v\leq 1$ for all roots of $(A_n-A'_n)_n$ and all non-Archimedean places $v$, then $\alpha$ would be a root of unity. Such $\alpha$ is either a root of $(A^K_n)$ or $(A^{K,\prime}_n)$, which is a contradiction to Lemma \ref{lemma3}. By Theorem \ref{EPS}, after possibly passing to its conjugate, we have 
$$
\lim_{n\to\infty}\frac{\log|A_n|_v}{n}>0.
$$
Since all roots of $(A_n-A'_n)_n$ are of absolute value $\leq 1$ while there exists a root of $\pi$ of absolute value $\geq 1$, we find at least one root of $(A_n-A'_n)_n$ of absolute value $1$. After possibly replacing $A_n$ by $D A_n$ with an appropriate choice of positive integer $D$ and after passing to a subsequence, by Corollary \ref{cor:lower-bound}, we get
$|A_n-A_n|\geq 2$. 
Now, we can apply part (b) of Proposition \ref{prop1p} together with Lemma \ref{lem:periodlength-mult-by-int} to deduce that the period length of $(A_n)_n$ is unbounded.\\
\smallskip

\textit{Step \textbf{C.2}:  If $(A_n-A'_n)_n$  has a root $\alpha$ of absolute value $1$, then the period length is unbounded.} 
Let $Q$ be the minimal polynomial of $\alpha$, say of degree $m$. By Step \textbf{C.1}, we may assume that all roots of $Q'$ have absolute value $<1$, but then $\pi:=X^m Q(X^{-1})$ is an irreducible factor of the characteristic polynomial $(A_{-n}-A'_{-n})_n$ with root $\alpha^{-1}$ of absolute value $1$. However, all roots of $\pi'$ have absolute value $>1$. By Case \textbf{C.1}, we conclude that $(\ell(A_{-n}))_n$ is unbounded as $n\to \infty$.\\
\smallskip

\textit{Step \textbf{C.3}: If $(A_n-A'_n)_n$ is not Pisot then the period length is unbounded.} This is an immediate consequence of Steps \textbf{C.1} and \textbf{C.2}.\\
\smallskip

\textit{Step \textbf{C.4}: If $(A_n-A'_n)_n$ is Pisot but not unital then the period length is unbounded.} 
Since $(A_n-A_n')_n$ is not unital, there exists a non-Archimedean place $v$ and a root $\alpha$ of $(A_n-A'_n)_n$  such that $|\alpha|_v\neq 1$. After possibly replacing $(A_n-A'_n)_n$ by $(A_{-n}-A'_{-n})_{n}$ we may assume $|\alpha|_v>1$.
Let $\pi$ be the minimal polynomial of $\alpha$. If $\alpha$ has absolute value $<1$, then all roots of $\pi'$ have absolute value $>1$ by the Pisot condition. So after possibly replacing $v$ by $v'$  and $\alpha$ by a root of $\pi'$ there exists a root $\alpha$ of $(A_n-A'_n)_n$ and a non-Archimedean place $v$ such that $|\alpha|_v>1$ and $|\alpha|>1$.  In particular, the characteristic polynomial of $(A_n-A'_n)_n$ has a root of absolute value $>1$ which is not integral and part (a) of Lemma \ref{lemma2} implies that 
\[
	\lim_{n\to \infty} \frac{\log a_i(A_n)}{n}=0.
\]
Since $\alpha$ is also a root of $(A_n-A'_n)_n$, we conclude by Theorem \ref{EPS}
\[
	\lim_{n\to \infty} \frac{\log |A_n-A_n'|}{n}>0.
\]
Now, Proposition \ref{prop1p} shows that the period length is unbounded.\\
\smallskip

\textit{Step \textbf{C.5}: If $(A_n^\Q)_n$  has a root $\alpha$ of absolute value $\geq 1$ which is not an algebraic integer,  but $(A_n-A'_n)$ is unital Pisot, then the period length is unbounded.} By part (a) of Lemma \ref{lemma2}, we have for $i\geq 1$
\[
\lim_{n\to \infty} \frac{\log a_i(A_n)}{n}=0.
\]
Since $(A_n-A'_n)_n$ is unital Pisot, the sequence $(A_n-A_n')_n$ has a root of absolute value $>1$ and Theorem \ref{EPS} implies
\[
\lim_{n\to \infty} \frac{\log|A_n-A_n'|}{n}>0.
\]
Using the similar argument as in previous step, we conclude with Proposition \ref{prop1p}.\\
\smallskip 

\textit{Step \textbf{C.6}: If $(A_n^\Q)_n$  has a root $\alpha$ of absolute value $\leq 1$ which is not the reciprocal of an algebraic integer,  but $(A_n-A'_n)$ is unital Pisot, then the period length is unbounded.} This follows from Step \textbf{C.5} after replacing $(A_n)_n$ by $(A_{-n})_n$.\\

\smallskip 
Combining Steps \textbf{C.1}-\textbf{C.4} shows that a sequence $\underline{A}\in \mathbf{C}$ such that $\underline{A}-\underline{A}'$ is not unital Pisot has unbounded period length. Steps \textbf{C.5}-\textbf{C.6} show that if at least one of the two conditions on $\underline{A}+\underline{A}'$ in $\mathbf{c}$ is violated then the period length is unbounded. Thus we conclude that every sequence in $\mathbf{C}\setminus \mathbf{c}$ has unbounded period length. This concludes the proof of Theorem \ref{thm:A}.
\end{proof}

\section{Unital Pisot sequences}\label{sec:unital-pisot}
As mentioned in Remark 1.1, we expect that there exist sequences of type $(c)$ of Theorem \ref{thm:A} with unbounded period length. In this section, we discuss some examples of the sequences of the form $(c)$  having bounded period length. The simplest example of sequences of type $(c)$ are sequences of the form $A_n=\alpha^n$ with $\alpha$ an irrational unit in the real quadratic field $K$. In this case, by \cite[Theorem 2]{corv}, the sequence $(\ell(\alpha^n))_n$ is bounded. Now, we discuss some further examples of sequences of the form  $(c)$ with bounded period length:
\begin{prop}\label{prop:C} Let $K=\mathbb{Q}(\alpha)$, where $\alpha> 1$ is a quadratic irrational  which is a unit of norm $-1$ in the ring of integers of $K$. Let  $(A_n)_n$   be a sequence in $K$ given by  $A_n=\alpha^{r(n)}+\alpha^{s(n)}$, where $r(n)$  and $s(n)$  are sequences of odd positive integers satisfying the following conditions:
\begin{enumerate}
\item $\psi(n):=s(n)-r(n)> r(n),  ~~ \mbox{for all}~~ n \in \mathbb{N}.$ 
\item $r(n) \to \infty$ as $n \to \infty$. 
\end{enumerate}
Then the sequence $(\ell(A_n))_n$ of  period lengths  is bounded.
\end{prop}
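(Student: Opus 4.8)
The plan is to recognise $A_n$ as a reduced quadratic irrational that is a tiny perturbation of a power of $\alpha$, and to read off its continued fraction from Corvaja--Zannier's explicit analysis in \cite[Theorem 2]{corv}. First I would check that $A_n$ is \emph{reduced}. Since $\alpha$ is a unit of norm $-1$, $\alpha\alpha'=-1$, hence $\alpha'=-\alpha^{-1}\in(-1,0)$; and, as $r(n),s(n)$ are odd, $A_n'=(\alpha')^{r(n)}+(\alpha')^{s(n)}=-(\alpha^{-r(n)}+\alpha^{-s(n)})$. By hypothesis (1) we have $s(n)>2r(n)\ge 2$ and $r(n)\ge 1$, so $0<\alpha^{-r(n)}+\alpha^{-s(n)}<1$, whence $-1<A_n'<0$; together with $A_n=\alpha^{r(n)}+\alpha^{s(n)}>1$ this says $A_n$ is reduced, so its continued fraction is purely periodic and $\ell(A_n)$ is its exact period.

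Next I would isolate the dominant term. Writing $A_n=\alpha^{s(n)}(1+\alpha^{-\psi(n)})$, the hypothesis $\psi(n)>r(n)$ is equivalent to $\psi(n)>\tfrac12 s(n)$, i.e. to
\[
\bigl|A_n-\alpha^{s(n)}\bigr|=\alpha^{r(n)}<\alpha^{s(n)/2}=\sqrt{\alpha^{s(n)}} .
\]
Moreover $\alpha^{s(n)}$ is itself reduced and satisfies $\alpha^{s(n)}(\alpha^{s(n)})'=(\alpha\alpha')^{s(n)}=-1$, so $x=\alpha^{s(n)}$ solves $x=c+1/x$ with $c:=\alpha^{s(n)}+(\alpha^{s(n)})'\in\Z_{>0}$; thus $\alpha^{s(n)}=[\overline{c}]$ has period $1$. (Equivalently, with $e(n)=\tfrac12(r(n)+s(n))$ and $f(n)=\tfrac12\psi(n)$, both integers since $r(n),s(n)$ are odd, $A_n=\gamma_n\alpha^{e(n)}$ where $\gamma_n=\alpha^{f(n)}+\alpha^{-f(n)}$ is a positive rational integer when $f(n)$ is even, and $\sqrt{D}$ times a positive rational integer, with $D=(\alpha+\alpha')^2+4$ a fixed non-square integer, when $f(n)$ is odd; so $A_n$ is, up to a rational integer factor, a power of $\alpha$.)

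Then I would invoke \cite[Theorem 2]{corv}. Since $\alpha$ is a unit, that theorem gives an explicit continued fraction expansion of a quadratic irrational of the shape ``(a power of $\alpha$) $+$ (a sufficiently small error)'' --- this is precisely the device illustrated by the example $A_n=\sqrt{2}^{n}+(1+\sqrt{2})^n$ discussed above. The displayed inequality shows that for all large $n$ the error $\alpha^{r(n)}$ is small enough relative to $\alpha^{s(n)}$ for that expansion to apply, and it realises the period of $A_n$ as a word whose length is bounded independently of $n$: it is assembled from the length-$1$ period of $\alpha^{s(n)}$ together with a correction whose length is controlled uniformly by the size of the error term. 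Hence $(\ell(A_n))_n$ is bounded. (Alternatively, via the factorisation $A_n=\gamma_n\alpha^{e(n)}$: $(\ell(\alpha^{e(n)}))_n$ and $(\ell(\sqrt{D}\,\alpha^{e(n)}))_n$ are bounded by \cite[Theorem 2]{corv}, and the rational integer factor $\gamma_n$ is absorbed using the M\"{o}bius-transformation estimate \eqref{eq4} underlying Lemma \ref{lem:periodlength-mult-by-int}.)

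The main obstacle I expect is not conceptual but the uniform bookkeeping in the last step: checking that the smallness hypothesis of \cite[Theorem 2]{corv} is met \emph{uniformly in $n$} --- which is exactly why hypothesis (1) is imposed --- and, should one prefer the factorisation route, controlling the effect on the period of the (a priori unbounded) integer factor $\gamma_n$. Beyond \cite[Theorem 2]{corv}, the period estimate \eqref{eq4}, and reductions in the spirit of Proposition \ref{prop1p}, no further input should be needed.
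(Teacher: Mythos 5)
Your reduction and factorisation observations are correct — you verify $A_n$ is reduced for large $n$, and the rewriting $A_n=\gamma_n\alpha^{e(n)}$ with $\gamma_n=\alpha^{f(n)}+\alpha^{-f(n)}$ is genuine — but the step that is supposed to finish the proof is missing in both of your routes, and it is precisely the step that the paper's argument supplies.

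In the main route, you want to read the period of $A_n$ off from ``(a power of $\alpha$) $+$ (a small error)'' by citing \cite[Theorem 2]{corv}. That theorem describes the continued fraction of $\alpha^n$ for a quadratic unit $\alpha$; it does not contain, and you do not prove, a perturbation principle asserting that a quadratic irrational which is close to a period-one reduced number again has uniformly bounded period. The inequality $|A_n-\alpha^{s(n)}|<\sqrt{\alpha^{s(n)}}$ is suggestive but by itself says nothing about the partial quotients of $A_n$. (Even the paper's own Example, which cites \cite[Theorem 2]{corv}, is verified there by a direct computation of the continued fraction, not by a perturbation argument.) In the alternative route you are aware of the obstacle yourself: the Möbius-transformation estimate \eqref{eq4} gives $\ell(\gamma_n\alpha^{e(n)})\leq S_{N_n}\,\ell(\alpha^{e(n)})$ with the constant $S_{N_n}$ depending on the matrix, hence on $\gamma_n$, and $\gamma_n\to\infty$; so this only yields $\ell(A_n)=O(\ell(\alpha^{e(n)})\cdot S_{N_n})$, not a uniform bound. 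Neither route closes.

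What the paper actually does is a direct computation of the continued fraction of $A_n$: one shows that for large $n$ the integer parts are $a_0(n)=\mathrm{tr}(A_n)$ and $a_1(n)=\lfloor\alpha^{r(n)}\rfloor=\mathrm{tr}(\alpha^{r(n)})$, and then verifies the key algebraic identity $x_2(n)=x_0(n)$ (the remainders after two steps coincide). This identity is where the hypotheses are really used: $\alpha\alpha'=-1$ together with the oddness of $r(n)$ and $s(n)$ make the numerator $2+\alpha^{\psi(n)}+(\alpha')^{\psi(n)}+N(\alpha)^{r(n)}-\alpha^{\psi(n)}-(\alpha')^{\psi(n)}+N(\alpha)^{s(n)}$ vanish, forcing pure periodicity with period exactly $2$, i.e. $A_n=[\,\overline{\mathrm{tr}(A_n),\,\lfloor\alpha^{r(n)}\rfloor}\,]$. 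Your proposal never establishes (or substitutes for) this identity, so it does not reach the conclusion. To repair it you would either have to carry out this explicit two-step computation, or find and prove a quantitative statement about continued fractions of perturbations of period-one reduced quadratics — which is not available in the references you cite.
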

\begin{proof}[Proof of Proposition \ref{prop:C}]
Recall that, we have
\begin{equation*}
    A_n=\alpha^{r(n)}+\alpha^{s(n)}\quad \mbox{for all}~~ n\geq 1.
\end{equation*}
We will prove that the period length of $\underline{A}$ is uniformly bounded by $2$ for all large $n$. For a quadratic irrational $\alpha$, we will use the notation $\mathrm{tr}(\beta)$ and $N(\beta)$ for its trace and norm, respectively. Note that $\alpha$ is a unit of norm  $-1$ , therefore $-1<\alpha'<0.$ Thus, for $n$ sufficiently large, we have $-1<A'_n<0$.
By definition, $\mathrm{tr}(A_n)=A_n+A'_n$ and $N(A_n)=A_nA'_n=-2-\alpha^{\psi(n)}-(\alpha')^{\psi(n)}$.
\bigskip

Let $x_0(n):=A_n$ . Then, for $n$ sufficiently large, $[x_0(n)]=\mathrm{tr}(A_n),$ so we put $a_0(n)=\mathrm{tr}(A_n).$ We have  
$$
x_1(n):=\frac{1}{x_0(n)-[x_0(n)]}=\frac{1}{A_n-\mathrm{tr}(A_n)}=\frac{1}{-A'_n}=\frac{A_n}{-N(A_n)}=\frac{\alpha^{r(n)}+\alpha^{s(n)}}{2+\alpha^{\psi(n)}+(\alpha')^{\psi(n)}}.
$$
Simplifying further, we get:
$$
x_1(n)=\frac{\alpha^{r(n)}}{2+\alpha^{\psi(n)}+(\alpha')^{\psi(n)}}+\frac{\alpha^{s(n)}}{2+\alpha^{\psi(n)}+(\alpha')^{\psi(n)}}=\frac{1}{\frac{2}{\alpha^{r(n)}}+\frac{\alpha^{\psi(n)}}{\alpha^{r(n)}}+\frac{(\alpha')^{\psi(n)}}{\alpha^{r(n)}}}+\frac{\alpha^{r(n)}}{\frac{2}{\alpha^{\psi(n)}}+1+\frac{(\alpha')^{\psi(n)}}{\alpha^{\psi(n)}}}.
$$
Again for $n$ sufficiently large, the integral part of $x_1(n)$ is $[\alpha^{r(n)}]$, so we put $a_1(n)=[\alpha^{r(n)}]$ for $n$ sufficiently large. Since $\alpha$ is a Pisot number and $-1<\alpha'<0$, we have  $[\alpha^{r(n)}]=\mathrm{tr}(\alpha^{r(n)})=\alpha^{r(n)}+(\alpha')^{r(n)}$ and hence $a_1(n)=\mathrm{tr}(\alpha^{r(n)})$.
Then  
$$
x_2(n)=\frac{1}{x_1(n)-a_1(n)}=\frac{1}{\frac{\alpha^{r(n)}+\alpha^{s(n)}}{2+\alpha^{\psi(n)}+(\alpha')^{\psi(n)}}-(\alpha^{r(n)}+(\alpha')^{r(n)})}=\frac{2+\alpha^{\psi(n)}+(\alpha')^{\psi(n)}}{-(\alpha')^{r(n)}-(\alpha')^{s(n)}}.
$$
Now
\begin{align*}
x_2(n)-x_0(n)&=\frac{2+\alpha^{\psi(n)}+(\alpha')^{\psi(n)}}{-(\alpha')^{r(n)}-(\alpha')^{s(n)}}-(\alpha^{r(n)}+\alpha^{s(n)})\\ &=\frac{2+\alpha^{\psi(n)}+(\alpha')^{\psi(n)}+N(\alpha)^{r(n)}-\alpha^{\psi(n)}-(\alpha')^{\psi(n)}+N(\alpha)^{s(n)}}{-(\alpha')^{r(n)}-(\alpha')^{s(n)}}=0
\implies x_2(n)=x_0(n).
\end{align*}
In the last equality, we have used the fact that $\alpha$ is an unit of norm $-1$, and $r(n), s(n)$ are odd positive integers. 
Thus, we concluded that for all sufficiently large $n$, we have the following simple continued fraction representation of $A_n:$
$$
A_n=[\mathrm{tr}(A_n);\overline{[\alpha^{r(n)}],\mathrm{tr}(A_n)}].
$$
This shows that for $n$ sufficiently large, the period length of $A_n$ is $2$, and therefore the sequence $(\ell(A_n))_n$ of  period lengths is bounded. 
\end{proof}

\begin{prop}\label{prop:D}
Let $K=\mathbb{Q}(\alpha)$, where $\alpha> 1$ is a quadratic irrational which is a  unit in the ring of integers of $K$. Let  $(A_n)_n$   be a sequence in $K$  given by $A_n=\alpha^{r(n)}+\alpha^{2 r(n)},$ where $r(n)$ is a sequence of even positive integers such that $r(n) \to \infty$ as $n \to \infty.$ Then  for $n$ sufficiently large  period length of $A_n$ is $4$ and hence the sequence  $(\ell(A_n))_n$ of period lengths is bounded.
\end{prop}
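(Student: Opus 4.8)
The strategy mirrors exactly the computation in the proof of Proposition \ref{prop:C}: I will compute the continued fraction of $A_n = \alpha^{r(n)} + \alpha^{2r(n)}$ explicitly by iterating the Gauss map, keeping track of the partial quotients $a_0, a_1, a_2, a_3$ and showing that $x_4(n) = x_0(n)$ for all sufficiently large $n$, so that the expansion is purely periodic with period $4$. Throughout I write $r = r(n)$, an even positive integer tending to infinity, and I use that $\alpha > 1$ is a unit, so $\alpha\alpha' = N(\alpha) = \pm 1$; since $r$ is even, $(\alpha')^{r} = \alpha^{-r}$ regardless of the sign of the norm, which is the feature making the period $4$ rather than $2$. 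First I would record the basic estimates: $|\alpha'| = 1/\alpha < 1$, so $(\alpha')^{r}\to 0$ and for $n$ large $A_n' = (\alpha')^{r} + (\alpha')^{2r}$ is a small positive number in $(0,1)$, and $A_n$ is a large positive quadratic irrational. Hence $x_0(n) := A_n$ has $\lfloor x_0(n)\rfloor = A_n + A_n' - 1 = \mathrm{tr}(A_n) - 1$ (the $-1$ appearing because $0 < A_n' < 1$, so the integer part of $A_n$ is $\mathrm{tr}(A_n) - 1$, not $\mathrm{tr}(A_n)$ — this is a point to be careful about, unlike the norm $-1$ case where $A_n'<0$). So I set $a_0(n) = \mathrm{tr}(A_n) - 1$.

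Next I would carry out the iteration. From $x_1(n) = 1/(x_0(n) - a_0(n)) = 1/(1 - A_n')$, I compute its integer part; since $A_n'$ is small and positive, $1 - A_n'$ is slightly less than $1$, so $x_1(n)$ is slightly larger than $1$ and $a_1(n) = 1$. Then $x_2(n) = 1/(x_1(n) - 1) = (1 - A_n')/A_n'$; since $A_n' = (\alpha')^r(1 + (\alpha')^r)$ is small, this is a large number, and I would express $\lfloor x_2(n)\rfloor$ in terms of traces of powers of $\alpha$ — by the Pisot-type identity $\lfloor \alpha^{kr}\rfloor = \mathrm{tr}(\alpha^{kr}) - [(\alpha')^{kr} < 0]$, which is clean here because $kr$ is even so $(\alpha')^{kr} = \alpha^{-kr} > 0$ and the floor is simply $\mathrm{tr}(\alpha^{kr}) - 1$ or exactly $\alpha^{kr} + \alpha^{-kr}$ up to that correction. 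I expect $a_2(n)$ to come out as something like $\mathrm{tr}(\alpha^{r}) - 1$ or $\mathrm{tr}(\alpha^{2r}) - \mathrm{tr}(\alpha^{r})$; the precise bookkeeping is the routine part. Continuing to $x_3(n)$ and then $x_4(n)$, the key algebraic fact I will exploit is that applying the conjugation $(\cdot)'$ to $A_n = \alpha^r + \alpha^{2r}$ gives $A_n' = \alpha^{-r} + \alpha^{-2r}$, and the numerators/denominators appearing in the iterates are polynomials in $\alpha^{\pm r}$ with rational coefficients that telescope; substituting $\alpha\alpha' = \pm 1$ with $r$ even makes the $x_4 - x_0 = 0$ identity fall out exactly as in Proposition \ref{prop:C}.

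The main obstacle — really the only nontrivial point — is the correct determination of the integer parts $a_i(n)$ at each of the four steps, since an off-by-one error in any floor function would break the periodicity identity; the norm $-1$ case of Proposition \ref{prop:C} was slightly cleaner because there $-1 < \alpha' < 0$ made $A_n'$ land in $(-1,0)$ directly, whereas here I have a positive conjugate and must track the resulting $-1$ corrections consistently through all four iterates. Once the four partial quotients are pinned down and the identity $x_4(n) = x_0(n)$ is verified by a direct substitution using $\alpha^r(\alpha')^r = 1$, we conclude that for all sufficiently large $n$,
\[
    A_n = [a_0(n); \overline{a_1(n), a_2(n), a_3(n), a_4(n)}]
\]
has purely periodic tail of period length $4$, and hence $(\ell(A_n))_n$ is bounded, as claimed. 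I would also remark why the period cannot collapse to $2$: the exponents $r$ and $2r$ are not in the ratio needed for the two-step identity of Proposition \ref{prop:C} (which required odd exponents and $s(n) - r(n) > r(n)$), so the $4$ is genuine for generic large $n$.
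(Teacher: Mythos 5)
Your overall approach — explicitly iterating the Gauss map for $x_0(n)=A_n$, tracking the partial quotients $a_0,a_1,a_2,a_3$, and using $\alpha^{r}(\alpha')^{r}=1$ to force a telescoping identity — is exactly what the paper does. But the specific periodicity identity you propose to verify, $x_4(n)=x_0(n)$, is wrong, and not merely a typo: it is impossible. If $x_4(n)=x_0(n)$ held, then $A_n$ would be a fixed point of the M\"obius map $[a_0;a_1,a_2,a_3,\cdot]$ and hence would have a purely periodic continued fraction; but a quadratic irrational has a purely periodic expansion if and only if it is reduced, i.e.\ $A_n>1$ and $-1<A_n'<0$. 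Here you have correctly observed that $A_n'=(\alpha')^{r(n)}+(\alpha')^{2r(n)}\in(0,1)$ because $r(n)$ is even, so $A_n$ is not reduced and $x_4\ne x_0$. The correct identity, which the paper establishes, is $x_4(n)=x_0(n)-1$; one then sets $a_4(n)=\lfloor x_4(n)\rfloor=\mathrm{tr}(A_n)-2$ (which differs from $a_0(n)=\mathrm{tr}(A_n)-1$) and carries the iteration one step further to verify $x_5(n)=x_1(n)$, which is what actually closes the period and yields $A_n=[a_0(n);\overline{a_1(n),a_2(n),a_3(n),a_4(n)}]$.

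This is precisely the off-by-one issue you flagged as the "main obstacle," but you did not follow it through to its structural consequence: because $A_n'>0$, the preperiod $a_0$ cannot be absorbed into the cycle, so the termination condition must compare $x_5$ with $x_1$ (or, equivalently, $x_4$ with $x_0-1$), not $x_4$ with $x_0$. Without this correction the intended verification step would fail outright, since $x_4(n)-x_0(n)$ does not vanish. The rest of the plan — determining $a_1(n)=1$, $a_3(n)=1$, $a_2(n)=\lfloor\alpha^{r(n)}\rfloor-2=\mathrm{tr}(\alpha^{r(n)})-3$, and using norm $1$ (since $r(n)$ is even) in the telescoping — aligns with the paper.
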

\begin{proof}[Proof of Proposition \ref{prop:D}]
Recall that, we have
\begin{equation*}
    A_n=\alpha^{r(n)}+\alpha^{2 r(n)}\quad \mbox{for all}~~ n\geq 1.
\end{equation*}
We will prove that the period length of $\underline{A}$ is uniformly bounded by $4$ for all large $n$.
 Note that  $\alpha >1$ and $\alpha$ is a unit $,$ therefore either  $-1<\alpha'<0$ or $0<\alpha'<1.$ Since  $r(n)$ is a sequence of even positive integers numbers, we have   $0<(\alpha')^{r(n)}<1$  for each $n\in \mathbb{N}.$ Thus for $n$ sufficiently large $0<A_n'<1.$ By definition, $\mathrm{tr}(A_n)=A_n+A_n'$ and $N(A_n)=A_nA_n'=2+\alpha^{r(n)}+(\alpha')^{r(n)}.$
 \bigskip
 
 Let $x_0(n):=A_n$. Then for $n$ sufficiently large,  $[x_0(n)]= \mathrm{tr}(A_n)-1$, so we put $a_0(n)=\mathrm{tr}(A_n)-1$. We have 
 $$
 x_1(n):=\frac{1}{x_0(n)-[x_0(n)]}=\frac{1}{A_n-(\mathrm{tr}(A_n)-1)}=\frac{1}{1-A_n'}.
 $$
Again  for $n$ sufficiently large, the integral part of $x_1(n)$ is $1$, so we put   $a_1(n)=1.$ Now we find $a_2(n)$. Consider
 $$
 x_2(n):=\frac{1}{x_1(n)-[x_1(n)]}=\frac{1}{\frac{1}{1-A_n'}-1}=\frac{1-A_n'}{A_n'}=\frac{A_n-N(A_n)}{N(A_n)}.
 $$
 Putting the value of $A_n$ and $N(A_n)$ we get:
 $$
 x_2(n)=\frac{\alpha^{r(n)}+\alpha^{2 r(n)}-(2+\alpha^{r(n)}+(\alpha')^{r(n)})}{2+\alpha^{r(n)}+(\alpha')^{r(n)}}=\frac{\alpha^{2r(n)}-2-(\alpha')^{r(n)}}{2+\alpha^{r(n)}+(\alpha')^{r(n)}}.
 $$
 Since $\alpha$ is a Pisot number,  we have  $\alpha^{r(n)}$ is also a Pisot number. Moreover, $0<(\alpha')^{r(n)}<1$ $,$ therefore  $[\alpha^{r(n)}]=\mathrm{tr}(\alpha^{r(n)})-1=\alpha^{r(n)}+(\alpha')^{r(n)}-1.$
 It can be easily seen that  $x_2(n)-1<[\alpha^{r(n)}]-2<x_2(n).$ Thus the integral part of  $[x_2(n)]$ is $[\alpha^{r(n)}]-2$, we put $a_2(n)=[\alpha^{r(n)}]$. Then 
 $$
 x_3(n):=\frac{1}{x_2(n)-[x_2(n)]}=\frac{1}{\frac{\alpha^{2r(n)}-2-(\alpha')^{r(n)}}{2+\alpha^{r(n)}+(\alpha')^{r(n)}}-(\alpha^{r(n)}+(\alpha')^{r(n)}-3)}=\frac{\alpha^{r(n)}+2+(\alpha')^{r(n)}}{2+\alpha^{r(n)}-(\alpha')^{2r(n)}}.
 $$
 For $n$ sufficiently large, $[x_3(n)]=1,$, and we write   $a_3(n)=1$. To calculate partial quotient $a_4(n)$, we consider 
 $$
 x_4(n):=\frac{1}{x_3(n)-[x_3(n)]}=\frac{1}{\frac{\alpha^{r(n)}+2+(\alpha')^{r(n)}}{2+\alpha^{r(n)}-(\alpha')^{2r(n)}}-1}=\frac{2+\alpha^{r(n)}-(\alpha')^{2r(n)}}{(\alpha')^{r(n)}+(\alpha')^{2r(n)}}.
 $$
 Now,
 $$
 x_4(n)-(x_0(n)-1)=\frac{2+\alpha^{r(n)}-(\alpha')^{2r(n)}}{(\alpha')^{r(n)}+(\alpha')^{2r(n)}}-(\alpha^{r(n)}+\alpha^{2r(n)}-1)=0.
 $$
 Therefore  $x_4(n)=x_0(n)-1$ and hence  $[x_4(n)]=[x_0(n)]-1=\mathrm{tr}(A_n)-2.$ We continue the similar process to find the next partial quotient:
 $$
 x_5(n):=\frac{1}{x_4(n)-[x_4(n)]}=\frac{1}{\frac{2+\alpha^{r(n)}-(\alpha')^{2r(n)}}{(\alpha')^{r(n)}+(\alpha')^{2r(n)}}-(\mathrm{tr}(A_n)-2)}.
 $$
 Since $\mathrm{tr(A_n)}=\alpha^{r(n)}+\alpha^{2r(n)}+(\alpha')^{r(n)}+(\alpha')^{2r(n)}$, we get 
 $$
 x_5(n)=\frac{(\alpha')^{r(n)}+(\alpha')^{2r(n)}}{-2(\alpha')^{3r(n)}+(\alpha')^{r(n)}-(\alpha')^{4r(n)}}.
 $$
 Now 
 $$
 x_5(n)-x_1(n)=\frac{(\alpha')^{r(n)}+(\alpha')^{2r(n)}}{-2(\alpha')^{3r(n)}+(\alpha')^{r(n)}-(\alpha')^{4r(n)}}-\left(\frac{1}{1-(\alpha')^{r(n)}-(\alpha')^{2r(n)}}\right)=0,
 $$
 which implies that $x_5(n)=x_1(n)$ and hence $A_n$ has periodic continued fraction representation. Moreover, for all sufficiently large $n$, we have the following simple continued fraction representation of $A_n:$
 $$
 A_n=[\mathrm{tr}(A_n)-1;\overline{1,[\alpha^{p(n)}]-2,1,\mathrm{tr(A_n)}-2}].
 $$
 This shows that for $n$ sufficiently large, the period length of $A_n$ is $4$, and therefore the sequence $(\ell(A_n))_n$ of  period lengths is bounded. 
 \end{proof}

\section*{Appendix A}
In this section, we will prove the following Theorem, which is a more precise version of Theorem \ref{EPS} in the non-Archimedean case:

\begin{theorem}\label{thm1}
Let $K$ be a number field of degree $d$, and $v$ a non-Archimedean place of $K$. Let $(A_n)_n$ be a non-degenerate linear recurrence sequence in $K$ and write $A_n:=a_1(n)\alpha^n_1+\cdots+a_k(n)\alpha^n_k$ with algebraic numbers $\alpha_1,\ldots,\alpha_k$ such that $|\alpha_1|_v\geq |\alpha_2|_v\geq \dots |\alpha_k|_v$ and with non-zero polynomials $a_j(x)\in\overline{\mathbb{Q}}[x]$. Let us write $m$ for the maximum of the degrees of $a_1,\dots, a_k$. Let $S$ be a finite set of places containing the Archimedean places $M_K^\infty$ and such that $\alpha_1,\dots,\alpha_k$ are $S$-units. Suppose that $|\alpha_1|_v>1$. Then for any $\epsilon>0$, we have 
\begin{equation}\label{eq3.4}
|A_n|_\mathit{v}\geq C_8 |a_1(n)|_\mathit{v}|\alpha^n_1|_\mathit{v} n^{-k d m-dm\epsilon}\left(\max_{\substack{j=1,\ldots,k\\ i=1,\ldots,d}}|\sigma_i(\alpha^n_j)|\right)^{-\epsilon d}\cdot \left(\prod_{\omega\in S\backslash M^\infty_K}\max\{|\alpha^n_1|_\omega,\ldots,|\alpha^n_k|_\omega\}\right)^{-\epsilon}
\end{equation}
holds for all sufficiently large values of $n$,  where $\{\sigma_1,\ldots,\sigma_d\}$  is the set of all embeddings of $K$ in $\mathbb{C}$.

\end{theorem}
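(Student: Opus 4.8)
The plan is to deduce \eqref{eq3.4} from the Schmidt Subspace Theorem, in the form of Evertse's results on $S$-unit equations \cite{Evertse}, following the strategy used by van der Poorten--Schlickewei in the Archimedean case. Since the Weil height is invariant under conjugation, I would first assume that $\alpha_1,\dots,\alpha_k$ and all coefficients of $a_1,\dots,a_k$ lie in $K$; enlarging $S$ by the finitely many places dividing the denominators of those coefficients changes only the constant $C_8$, and — because any such new place $\omega$ is a place where the $S$-unit $\alpha_j$ still satisfies $|\alpha_j|_\omega=1$ — does not enlarge the product over $S\setminus M_K^\infty$ in \eqref{eq3.4}. Note that $v\in S$ is a finite place with $|\alpha_1|_v>1$. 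For large $n$ I set
\[
	\mathbf{x}_n:=\big(a_1(n)\alpha_1^n,\dots,a_k(n)\alpha_k^n\big)\in (K^\times)^k ,
\]
which is non-zero since each $a_j\not\equiv 0$. Because $\alpha_1,\dots,\alpha_k$ are $S$-units, the height $H(\alpha_1^n:\dots:\alpha_k^n)$ over $K$ is, up to a multiplicative constant, $\big(\max_{i,j}|\sigma_i(\alpha_j^n)|\big)^{d}\prod_{\omega\in S\setminus M_K^\infty}\max_j|\alpha_j^n|_\omega$, and the remaining factors in \eqref{eq3.4} — the powers of $n$ — account for the contribution of the polynomial values $a_1(n),\dots,a_k(n)$ to $H(\mathbf{x}_n)$; keeping careful track of all of these is exactly what fixes the precise exponents in \eqref{eq3.4}.

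\emph{Applying the Subspace Theorem.} For every $w\in S$ with $w\neq v$ take the coordinate linear forms $X_1,\dots,X_k$, and for $w=v$ take $X_2,\dots,X_k$ together with $L:=X_1+\dots+X_k$; at each place the chosen forms are linearly independent, and $L(\mathbf{x}_n)=A_n$. Suppose \eqref{eq3.4} fails for all $n$ in an infinite set $\mathcal{N}$. Inserting this failure into the product $\prod_{w\in S}\prod_{j}|L_{w,j}(\mathbf{x}_n)|_w$ — in which the only genuinely small factor is $|L(\mathbf{x}_n)|_v=|A_n|_v$ — and simplifying by the product formula together with the height estimates above, one finds that for $n\in\mathcal{N}$ the point $\mathbf{x}_n$ satisfies the defining inequality of the Subspace Theorem with some $\epsilon'>0$ depending only on $\epsilon$, $k$ and $d$. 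Hence, by \cite{Evertse}, the $\mathbf{x}_n$ with $n\in\mathcal{N}$ lie in a finite union of proper linear subspaces of $K^k$; by pigeonhole one such subspace contains $\mathbf{x}_n$ for infinitely many $n$, i.e.\ there are $\lambda_1,\dots,\lambda_k\in K$, not all zero, with $\lambda_1a_1(n)\alpha_1^n+\dots+\lambda_ka_k(n)\alpha_k^n=0$ for infinitely many $n$.

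\emph{Conclusion via non-degeneracy.} The sequence $B_n:=\sum_{j}\lambda_ja_j(n)\alpha_j^n$ is a linear recurrence sequence with infinitely many zeros, so by the Skolem--Mahler--Lech theorem its set of zeros contains an infinite arithmetic progression $\{an+b:n\ge 0\}$ with $a\ge 1$. Then $B_{an+b}=\sum_{j}\big(\lambda_j\alpha_j^b\,a_j(an+b)\big)(\alpha_j^a)^n$ vanishes identically in $n$; since $\underline{A}$ is non-degenerate, no ratio $\alpha_i/\alpha_j$ with $i\neq j$ is a root of unity, so $\alpha_1^a,\dots,\alpha_k^a$ are pairwise distinct, and uniqueness of the power-sum representation forces $\lambda_j\alpha_j^b\,a_j(aX+b)\equiv 0$, hence $\lambda_j=0$, for every $j$ — contradicting the choice of the $\lambda_j$. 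Therefore \eqref{eq3.4} holds for all sufficiently large $n$. I expect the middle step to be the main obstacle: translating the failure of the explicit inequality \eqref{eq3.4} into the precise shape demanded by the Subspace Theorem requires carefully tracking the heights of the unit parts $\alpha_j^n$ and the $S$-parts of the polynomial values $a_j(n)$, and this is where the explicit exponents originate. The hypothesis $|\alpha_1|_v>1$ is what makes $v$ an admissible ``small'' place in the linear form system, and non-degeneracy is precisely what excludes the exceptional subspaces in the final step.
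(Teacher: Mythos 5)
Your proposal is correct but follows a genuinely different route from the paper. You apply Schmidt's Subspace Theorem \emph{directly} to the points $\mathbf{x}_n=(a_1(n)\alpha_1^n,\dots,a_k(n)\alpha_k^n)$, supposing \eqref{eq3.4} fails infinitely often, and then eliminate the finitely many exceptional subspaces using Skolem--Mahler--Lech together with the non-degeneracy hypothesis. The paper instead factors the argument through a separate, self-contained inequality --- Theorem~\ref{Evertse}, an Evertse-type lower bound for $S$-unit sums, proved in the appendix by induction on the number of summands (itself via the Subspace Theorem) --- and then applies that inequality once with $T=\{v\}$, using \cite[Proposition~2.2]{kul} to guarantee the needed non-vanishing of sub-sums for large $n$. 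So the paper's argument is direct (no contradiction step, no subspace elimination), while yours is a one-shot contradiction argument tailored to recurrence points; the paper's approach buys a reusable general statement (Theorem~\ref{Evertse}), whereas yours avoids the induction and is shorter if one only wants Theorem~\ref{thm1}. Two small points you gloss over but should make explicit: (i) the constant in your final Subspace-Theorem inequality is absorbed by noting $H(\mathbf{x}_n)\to\infty$ (which needs $k\ge 2$ and a non-degeneracy argument; for $k=1$ the statement is trivial), and (ii) clearing denominators of the $a_j$ is better done by multiplying through by a fixed integer $D$, as the paper does, rather than by enlarging $S$ --- otherwise you must check that the new finite places do not perturb the factor $\prod_{\omega\in S\setminus M_K^\infty}\max_j|\alpha_j^n|_\omega$ in the claimed bound, which you assert but would need to justify. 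Your treatment of the exponents is admittedly sketchy, but the bookkeeping (Archimedean contributions giving $n^{dm}(\max_{i,j}|\sigma_i(\alpha_j^n)|)^d$ and the non-Archimedean places in $S$ giving the remaining product) matches the paper's estimate for $H(\mathbf{x})$, so I see no gap in substance.
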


For a number field $K$, let $M_K$ be the set of all places on $K$ and $M_K^\infty$ be the set of all Archimedean places on $K$. For a non-zero vector ${\bf x}:=(x_0,x_1,\ldots,x_n)\in K^{n+1}$, we define the projective height $H({\bf x})$ as follows:
$$
H({\bf x}):=\displaystyle\prod_{\mathit{v}\in M_K}\max\{|x_0|_\mathit{v},|x_1|_\mathit{v},\ldots,|x_n|_\mathit{v}\}.
$$
We can see that the  projective $H({\bf x})\geq 1$ for all  ${\bf x}\in P^n(K)$.
We also define:
$$
||{\bf x}||:=\max_{\substack{j=0,1,\ldots,t\\ i=1,\ldots,d}}|\sigma_i(x_j)|,
$$
with  $\{\sigma_1,\ldots,\sigma_d\}$  the set of all embeddings of $K$ in $\mathbb{C}$. Moreover, we denote by $\mathcal{O}_{K,S}$  the ring of $S$-integers in $K$. By the product formula, it is easy to see that $\displaystyle\prod_{\mathit{v}\in S}|{\bf x}|_\mathit{v}\geq 1$ for all non-zero ${\bf x}\in\mathcal{O}_{K,s}$.
The following Theorem is required to proof Theorem \ref{thm1}.
\begin{theorem}\label{Evertse}
 Let $t$ be a non-negative integer and $S$ be a finite set of places on $K$, containing all the archimedean places. Then for every $\epsilon>0$, there exists  a constant $C$, depending
only on $\epsilon, S, K, t$ such that for each non-empty subset $T$ of $S$ and every vector ${\mathbf{x}}:=(x_0, x_1,\ldots,x_t)\in\mathcal{O}_{K,S}^{t+1}$, the following holds
\begin{equation}\label{eq3.5}
\left(\prod_{k=0}^t\prod_{\mathit{v}\in S}|x_k|_\mathit{v}\right)\prod_{\mathit{v}\in T}|x_0+x_1+\cdots+x_t|_\mathit{v}\geq C \left(\prod_{\mathit{v}\in T}\max_{0\leq i\leq  t}|x_i|_\mathit{v}\right) H({\bf x})^{-\epsilon}.
\end{equation}
with no  sub-sum  of $x_0+x_1+\cdots+x_t$ vanishes.
\end{theorem}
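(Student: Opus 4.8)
This is (essentially) Evertse's theorem on sums of $S$-units, and the intended route is simply to quote it from Evertse's work on $S$-unit equations \cite{Evertse}. If one wants a self-contained argument, the plan is to deduce \eqref{eq3.5} from Schmidt's Subspace Theorem by induction on the number of summands $t+1$. I normalise all $|\cdot|_v$ so that the product formula holds and write $|\mathbf{x}|_v=\max_i|x_i|_v$, so $H(\mathbf{x})=\prod_v|\mathbf{x}|_v$; since $\mathbf{x}\in\mathcal{O}_{K,S}^{t+1}$ one has $\prod_{v\in S}|\mathbf{x}|_v\ge H(\mathbf{x})$ and $\prod_{v\in S}|x_k|_v\ge 1$ for each nonzero coordinate, and these (together with the Subspace Theorem) are the only inputs.

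The ``generic'' case is a single application of the Subspace Theorem in $\mathbb{P}^t$. For each $v\in S$ I take $t+1$ linearly independent linear forms in $X_0,\dots,X_t$: for $v\notin T$ the coordinate forms $X_0,\dots,X_t$, and for $v\in T$ the form $L:=X_0+\cdots+X_t$ together with the $t$ coordinate forms $X_i$ with $i\neq j(v)$, where $j(v)$ is an index realising $|x_{j(v)}|_v=|\mathbf{x}|_v$ (these are independent because $L$ involves $X_{j(v)}$). Running over the finitely many functions $j\colon T\to\{0,\dots,t\}$, the Subspace Theorem produces a finite union $W$ of proper linear subspaces such that for every $\mathbf{x}\notin W$
\[
\prod_{v\in T}\frac{|L(\mathbf{x})|_v\prod_{i\neq j(v)}|x_i|_v}{|\mathbf{x}|_v^{\,t+1}}\cdot\prod_{v\in S\setminus T}\frac{\prod_{i=0}^t|x_i|_v}{|\mathbf{x}|_v^{\,t+1}}>H(\mathbf{x})^{-(t+1)-\epsilon}.
\]
Since $|x_{j(v)}|_v=|\mathbf{x}|_v$ the product of all numerators other than the $|L(\mathbf{x})|_v$ equals $\big(\prod_k\prod_{v\in S}|x_k|_v\big)\big/\prod_{v\in T}|\mathbf{x}|_v$, while $\prod_{v\in S}|\mathbf{x}|_v^{\,t+1}\ge H(\mathbf{x})^{t+1}$; clearing denominators in the displayed inequality gives exactly \eqref{eq3.5} with $C=1$ for all $\mathbf{x}\notin W$ (the $t+1$ in the exponent of $|\mathbf{x}|_v$ cancelling that of $H(\mathbf{x})^{t+1}$, leaving $H(\mathbf{x})^{-\epsilon}$).

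It remains to deal with the finitely many $\mathbf{x}\in W$, i.e. those lying on one of finitely many fixed hyperplanes $\sum_i c_ix_i=0$. Here I would induct on $t$, first generalising the statement to linear forms $L=b_0X_0+\cdots+b_tX_t$ with $\mathbf{b}$ an arbitrary tuple of nonzero elements of $K$ (the constant then also depends on $\mathbf{b}$, which is harmless since the descent creates only finitely many such tuples); the case $t=0$ reduces to $\prod_{v\in S}|x_0|_v\ge 1$. On a hyperplane $\sum_ic_ix_i=0$ with $\mathbf{c}\neq0$ (normalised so $c_i\in\mathcal{O}_{K,S}$) put $J=\{i:c_i\neq0\}$; then $|J|\ge2$, since $|J|=1$ would force a coordinate to vanish, contradicting the hypothesis applied to a one-element subsum. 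Grouping $J$ by the value of $b_i/c_i$: if this value is constant on $J$ then $\sum_{i\in J}b_ix_i$ (or $L(\mathbf{x})$ itself when $J=\{0,\dots,t\}$) vanishes, again contradicting the hypothesis; otherwise I use $\sum_{i\in J}c_ix_i=0$ to rewrite $L(\mathbf{x})$ as a linear form with all coefficients nonzero and with strictly fewer than $t+1$ summands, in the $\mathcal{O}_{K,S}$-values $x_i$ ($i\notin J$) together with the block partial-sums $\sum_{i\in J_m}c_ix_i$ (each nonzero, again by the no-vanishing-subsum hypothesis). Discarding the eliminated coordinates only lowers the factors $\prod_{v\in S}|x_k|_v\ge1$, the $\max$'s and the height change by bounded factors, and the inductive hypothesis applies---provided the reduced system again has no vanishing subsum; if it does not, one refines once more along the offending subsum, which strictly decreases the number of summands, so the sub-recursion terminates.

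The main obstacle is precisely this descent through the exceptional subspaces: one must check that after at most finitely many refinements the reduced linear form has all coefficients nonzero \emph{and} no vanishing subsum, and that every auxiliary quantity produced by eliminating variables (the ratio of $\prod_{v\in T}\max_i|x_i|_v$ and of $H(\mathbf{x})$ before and after, the finitely many denominators coming from the hyperplanes) is bounded by a constant depending only on the finitely many hyperplanes and tuples $\mathbf{b}$ occurring. A secondary, routine point is pinning down the exact normalisation of the Subspace Theorem (absolute values, the exponent $-(t+1)-\epsilon$, the height on $\mathbb{P}^t$) so that the identities above are equalities; all of this can be sidestepped by homogenising---adjoin $x_{t+1}:=-(x_0+\cdots+x_t)$, so that $x_0+\cdots+x_{t+1}=0$ with no proper subsum vanishing---and invoking Evertse's $S$-unit sum theorem directly.
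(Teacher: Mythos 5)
Your proposal is correct and takes essentially the same route as the paper: both deduce Theorem \ref{Evertse} from the $S$-integer Subspace Theorem (Theorem \ref{schli}), taking the sum $X_0+\cdots+X_t$ as one of the forms at the places of $T$ and coordinate forms elsewhere (with an index realising the local maximum), and then handling the finitely many exceptional subspaces by induction on the number of summands, following Evertse and Fuchs--Heintze. The only differences are bookkeeping ones: the paper runs the Subspace Theorem inside the induction step on the putative counterexamples and clears denominators via rescaled variables $z_\ell=D'\beta_{j_\ell}x_{j_\ell}$ instead of generalising the statement to arbitrary nonzero-coefficient linear forms, but the substance of the two arguments is the same.
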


\noindent{\bf Remark}: Note that when $(x_0, x_1,\ldots,x_t)\in\mathcal{O}_K^{t+1}$, we have
 $$
 H({\bf x})\leq \prod_{\mathit{v}\in M^\infty_K}\max\{|x_0|_\mathit{v},\ldots,|x_t|_\mathit{v}\}\leq ||{\bf x}||^d,
 $$
 where $d:=[K:\mathbb{Q}]$. Then by applying  Theorem \ref{Evertse} with $\epsilon/d$ together with the bound of $H({\bf x})$, we obtain the inequality 
 $$
\left(\prod_{k=0}^t\prod_{\omega\in S}|x_k|_\omega\right)\prod_{\omega\in T}|x_0+x_1+\cdots+x_t|_\omega\geq C \left(\prod_{\omega\in T}\max_{0\leq i\leq  t}|x_i|_\omega\right) ||{\bf x}||^{-\epsilon}.
 $$
 This proves Theorem 8 in \cite{FH}.
\bigskip

Now we give the proof of Theorem \ref{thm1}.
\bigskip

\begin{proof}[Proof of Theorem \ref{thm1}] Let $K$ be a number field of degree $d$ containing the tuple  $(\alpha_1,\ldots,\alpha_k)$ and coefficients of polynomials $a_i(x)$ for $i=1,\ldots,k$. Denote by $L$ its Galois closure. Let $S$ be a suitable finite set of places on $L$ containing all the archimedean places such that $\alpha_i$
is an $S$-unit for each $i=1, 2,\ldots,k$. Let $D$ be a non-zero integer such that $D a_j(n)$ are algebraic integers for all $j=1,\ldots,k$. 
By the hypothesis, there exists a place  $\mathit{v}$ such that $\max\{|\alpha_i|_\mathit{v}:1\leq i\leq k\}>1$. We set $T:=\{\mathit{v}\}$.  Since the tuple $(\alpha_1,\ldots,\alpha_k)$ is non-degenerate, Proposition 2.2 from \cite{kul} yields for all sufficiently large values of $n$, we have 
$$
a_{j_1}(n)\alpha^n_{j_1}+\cdots+a_{j_s}(n)\alpha^n_{j_s}\neq 0
$$
for every non-empty subset $\{j_1,\ldots,j_s\}$ of $\{1,\ldots,k\}$. Thus by Theorem \ref{Evertse}, we get that 
\begin{align*}
\left(\prod_{j=1}^k\prod_{\omega\in S}|D a_j(n)\alpha^n_j|_\omega\right)|D G_n|_\mathit{v}&\geq C\max_{1\leq j\leq k}|D a_j(n)\alpha^n_j|_\mathit{v} H(D {\bf x})^{-\epsilon}\\
&=C\max_{1\leq j\leq t}|D a_j(n)\alpha^n_j|_\mathit{v} H({\bf x})^{-\epsilon},
\end{align*}
where ${\bf x}:=(a_1(n)\alpha^n_1,\ldots,a_k(n)\alpha^n_k)$ and $C$ is some fixed positive real number. Assume that $|\alpha_1|_\mathit{v}:=\displaystyle\max_{1\leq i\leq t}|\alpha_j|_\mathit{v}$. Since $\alpha_j$'s are $S$-units and $D$ is fixed integers, we can re-write the above inequality as
\begin{equation}\label{eq3.6}
\left(\prod_{j=1}^k\prod_{\omega\in S}|a_j(n)|_\omega\right)|G_n|_\mathit{v}\geq C_1| a_1(n)\alpha^n_1|_\mathit{v} H({\bf x})^{-\epsilon},
\end{equation}
for some constant $C_1>0$.
\smallskip

Now we estimate $H({\bf x})$  as follows: by definition
\begin{align*}
H({\bf x})=H(D{\bf x})&=\prod_{\omega\in M_K}\max\{ |D x_1|_\omega,\ldots,|D x_k|_\omega\}\leq \prod_{\omega\in S}\max\{|D x_1|_\omega,\ldots,|D x_k|_\omega\}\\
&= \prod_{\omega\in S}\max\{|D a_1(n)\alpha^n_1|_\omega,\ldots,|D a_k(n)\alpha^n_k|_\omega\}\\
&\leq\displaystyle\prod_{\omega\in M^\infty_K}\max\{|D a_1(n)\alpha^n_1|_\omega,\ldots,|D a_k(n)\alpha^n_k|_\omega|\}\displaystyle\prod_{\omega\in S\backslash M^\infty_K}\max\{|\alpha^n_1|_\omega,\ldots,|\alpha^n_k|_\omega\}.
\end{align*}
In the last line, we have used the fact that $D a_j(n)$ is algebraic integer for each $j=1,2\ldots,k$. We have 
\begin{align*}
H({\bf x})&\leq C_2 n^{dm} \left(\max_{\substack{j=1,\ldots,k\\ i=1,\ldots,d}}|\sigma_i(\alpha^n_j)|\right)^d \prod_{\omega\in S\backslash M^\infty_K}\max\{|\alpha^n_1|_\omega,\ldots,|\alpha^n_k|_\omega\}
\end{align*}
where $m:=\displaystyle\max_{1\leq j\leq k}\mbox{deg}~ a_j(x)$ and some positive constant $C_2$. Substituting this estimate into \eqref{eq3.6}, we get 
$$
\left(\prod_{j=1}^k\prod_{\omega\in S}|a_j(n)|_\omega\right)|G_n|_\mathit{v}\geq C_3 |a_1(n)|_\mathit{v}|\alpha^n_1|_\mathit{v} n^{- dm \epsilon}\left(\max_{\substack{j=1,\ldots,k\\ i=1,\ldots,d}}|\sigma_i(\alpha^n_j)|\right)^{-d\epsilon}\cdot\left(\prod_{\omega\in S\backslash M^\infty_K}\max{|\alpha^n_1|_\omega,\ldots,|\alpha^n_k|_\omega\}}\right)^{-\epsilon}.
$$
Applying Lemma 9 from \cite{FH} to the product in the brackets on the left hand side gives the bound
$$
\prod_{j=1}^k\prod_{\omega\in S}|a_j(n)|_\omega\leq \prod_{j=1}^k C^{(j)}_6 n^{d m}\leq C_7 n^{k d m},
$$
where $C^{(j)}_6$ and $C_7$ are some fixed positive real numbers.
Thus by combining all the above estimates, we have 
$$
|G_n|_\mathit{v}\geq C_8 |a_1(n)|_\mathit{v}|\alpha^n_1|_\mathit{v} n^{-k d m-dm\epsilon}\left(\max_{\substack{j=1,\ldots,k\\ i=1,\ldots,d}}|\sigma_i(\alpha^n_j)|\right)^{-\epsilon d}\cdot \left(\prod_{\omega\in S\backslash M^\infty_K}\max\{|\alpha^n_1|_\omega,\ldots,|\alpha^n_k|_\omega\}\right)^{-\epsilon}.
$$
holds for all $n$ sufficiently large and some fixed $C_8>0$. This completes the proof of Theorem \ref{thm1}.
\end{proof}
\bigskip

We need the following version of Schmidt Subspace Theorem to proof Theorem \ref{Evertse}. For a reference, see (\cite[ Chapter 7]{BG}.
\begin{theorem}\label{schli}{\rm(Subspace Theorem)}~
 Let $K$ be an algebraic number field and $m \geq 2$ an integer. Let $S$ be a finite set of places on $K$ containing all archimedean  places.  For each $v \in S$, let $L_{1,v}, \ldots, L_{m,v}$ be linearly independent linear  forms in the variables $X_1,\ldots,X_m$ with  coefficients in $K$.  For any $\epsilon>0$, the set of solutions $\mathbf{x} \in \mathcal{O}^m_{K,S}$ to the inequality 
\begin{equation*}
\prod_{v\in S}\prod_{i=1}^{m} |L_{i,v}(\mathbf{x})|_v\leq \frac{1}{H(\mathbf{x})^{\epsilon}}
\end{equation*}
lies in finitely many proper subspaces of $K^m$.
\end{theorem}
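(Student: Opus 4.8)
I would prove this along the classical Thue--Siegel--Roth--Schmidt line, in the adelic form due to Schlickewei that handles the finitely many places of $S$ simultaneously. First come the standard reductions: clearing denominators and using the product formula, it suffices to treat primitive vectors $\mathbf{x}\in\mathcal{O}_{K,S}^m$; after a bounded $K$-linear change of variables at each $v\in S$ one may assume the forms $L_{i,v}$ are in a convenient normal form (and for $v$ not dividing the coefficients, that they are the coordinate forms); and the goal becomes to derive a contradiction from the assumption that infinitely many such $\mathbf{x}$, not all contained in any finite union of proper subspaces, satisfy $\prod_{v\in S}\prod_{i=1}^m|L_{i,v}(\mathbf{x})|_v\le H(\mathbf{x})^{-\epsilon}$.

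\textbf{Geometry of numbers.} A solution $\mathbf{x}$ lies, for each $v\in S$, in a parallelepiped cut out by the $L_{i,v}$; assembling these into one convex body in $\prod_{v\in S}K_v^m$ and invoking the adelic form of Minkowski's second theorem (Bombieri--Vaaler, McFeat) relates its successive minima to a ``volume'' of size $\asymp H(\mathbf{x})^{-\epsilon}$ times a normalizing factor. Running this over a sequence of solutions of rapidly increasing height and applying a Davenport/pigeonhole argument, I would extract a tuple $\mathbf{x}_1,\dots,\mathbf{x}_m$ of solutions with heights $H(\mathbf{x}_1)\ll\dots\ll H(\mathbf{x}_m)$ spaced out as much as desired and ``jointly small'' for a single product of linear forms. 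Passing to a subsequence, I may assume $\mathbf{x}_1,\dots,\mathbf{x}_m$ are linearly independent, since otherwise they already lie in a proper subspace and that branch is finished.

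\textbf{Auxiliary polynomial and non-vanishing.} Fix degrees $r_1\gg r_2\gg\dots\gg r_m$ with ratios decaying fast (pinned down at the end in terms of $\epsilon$, $m$, $d=[K:\Q]$ and $|S|$). By Siegel's lemma there is a nonzero polynomial $P(\mathbf{X}^{(1)},\dots,\mathbf{X}^{(m)})$, multihomogeneous of degree $r_j$ in the $j$-th block, with $S$-integer coefficients of controlled height, vanishing at $(\mathbf{x}_1,\dots,\mathbf{x}_m)$ to index $\ge\tfrac{m}{2}-c(\epsilon,m)$; its existence forces a combinatorial inequality among the $r_j$ and $m$, which is why $m$ must be taken large depending on $\epsilon$ and the data. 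The opposite bound is Roth's lemma generalized to $m$ blocks of variables: if $\operatorname{ht}(P)$ is not too large relative to $r_1\log H(\mathbf{x}_1)$, the ratios $r_{j+1}/r_j$ are small, and the ratios $H(\mathbf{x}_{j+1})/H(\mathbf{x}_j)$ are large, then the index of $P$ at $(\mathbf{x}_1,\dots,\mathbf{x}_m)$ is small --- incompatible with the lower bound once $m$ is large. (In the Faltings--W\"ustholz reformulation this step is replaced by the Product Theorem of arithmetic algebraic geometry.) With the parameters so chosen, no infinite, spread-out, linearly independent family of solutions can exist; hence all solutions of height above some $H_0$ lie in a finite union of proper subspaces, and, together with the finitely many remaining solutions of bounded height, this yields the stated conclusion.

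\textbf{Main obstacle.} The hard part is the non-vanishing input, i.e.\ the upper bound on the index in the third step (Roth's lemma in several blocks): already for $m=2$ this is the crux of Roth's theorem, and in general it requires a delicate induction on the number of blocks via generalized Wronskians, together with the principle that a polynomial of controlled complexity cannot vanish to high order along too many ``independent'' points. The other place where care is needed --- though it is routine and mainly notational --- is doing the geometry of numbers adelically over $\prod_{v\in S}K_v$ rather than over $\R$; this is precisely where the non-Archimedean absolute values in the statement are genuinely used.
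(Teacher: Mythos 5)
The paper does not prove this statement at all: Theorem~\ref{schli} is the $S$-integral (Schlickewei) version of Schmidt's Subspace Theorem, and the authors quote it as a black box with a reference to Bombieri--Gubler, Chapter 7; it is then used only as an input to their proof of Theorem~\ref{Evertse}. So there is no proof in the paper to compare yours with --- what you have written is an outline of the classical proof that the cited reference carries out, and citing the literature, as the authors do, is the appropriate move here.

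Judged on its own terms, your sketch names the right ingredients (adelic geometry of numbers, Siegel's lemma, a Roth-type non-vanishing statement, with the Faltings--W\"ustholz Product Theorem as an alternative), but it has genuine gaps. First, you conflate the dimension $m$, which is fixed in the statement, with the number of auxiliary points/blocks of variables: in the Roth--Schmidt machinery that number is a separate parameter which must be chosen large in terms of $\epsilon$, $[K:\Q]$ and $|S|$, so working with exactly $m$ solutions $\mathbf{x}_1,\dots,\mathbf{x}_m$ and then ``taking $m$ large'' is not available, and with only $m$ points the index lower bound from Siegel's lemma cannot contradict the Roth-lemma upper bound. Second, and more seriously, in the Subspace Theorem the auxiliary polynomial is not evaluated at the solution vectors themselves: the proof runs through the successive minima of the adelic parallelepipeds (Minkowski/Bombieri--Vaaler together with Davenport's and Mahler's lemmas), an exterior-power/Grassmann reduction to the last minimum, and Schmidt's theorem on the penultimate minimum, and only then is the Roth/index machinery applied to vectors built from the minima; moreover, deducing that all solutions of large height lie in \emph{finitely many} proper subspaces requires controlling the heights of the exceptional subspaces that each application produces. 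As written, the branch ``apply Roth's lemma to $m$ jointly small, linearly independent solutions'' would fail, so the proposal is a roadmap rather than a proof.
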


\begin{proof}[Proof of Theorem \ref{Evertse}] The proof will closely follow the lines of the proof of Theorem 8 in \cite{FH}. The main difference is that we will require the $S$-integer version of the Schmidt subspace theorem, specifically Theorem \ref{schli}. 
\smallskip

Let $S$ be a finite subset of $M_K$, containing all the Archimedean places of $K$, as given in the hypothesis. We shall prove this theorem by induction on $t$. Since 
$$
\prod_{\mathit{v}\in S}|x_0|_\mathit{v}\prod_{\mathit{v}\in T}|x_0|_\mathit{v}\geq \prod_{\mathit{v}\in T}|x_0|_\mathit{v}\cdot 1=\left(\prod_{\mathit{v}\in T}|x_0|_\mathit{v}\right)H(x_0),
$$
in the last equality, we have used that the projective height  $H(x_0)=\displaystyle\prod_{\mathit{v}\in M_K}|x_0|_\mathit{v}=1$. Hence, by taking the constant $C=1$, we are done in the case $t=0$.  Now, suppose that Theorem \ref{Evertse} holds for all $0 \leq t < m$. Our goal is to prove Theorem \ref{Evertse} for $t = m$. Let $\epsilon > 0$, and let $T$ be a non-empty subset of $S$.  We will show that the points ${\bf x}:=(x_0,x_1,\ldots,x_t)\in\mathcal{O}^{t+1}_{K,S}$
which satisfy 
$$
x_{i_0}+x_{i_1}+\cdots+x_{i_s}\neq 0
$$
for each non-empty subset $\{i_0, i_1,\ldots,i_s\}$ of $\{0,1,\ldots,m\}$ and 
\begin{equation}\label{eq3.7}
|x_{i_{0 \mathit{v}}}|_\mathit{v}:=\max\{|x_{i_{1 \mathit{v}}}|_\mathit{v},\ldots,|x_{i_{m\mathit{v}}}|_\mathit{v}\quad\mbox{for all}~~\mathit{v}\in S, 
\end{equation}
where for each $\mathit{v}\in S$, $\{i_{0 \mathit{v}},\ldots,i_{m\mathit{v}}\}$ is a given permutation of $\{0,1,\ldots,m\}$, and 
\begin{equation}\label{eq3.8}
\left(\prod_{k=0}^t\prod_{\mathit{v}\in S}|x_k|_\mathit{v}\right)\prod_{\mathit{v}\in T}|x_0+x_1+\cdots+x_t|_\mathit{v}\leq \left(\prod_{\mathit{v}\in T}|x_{i_{0\mathit{v}}}|_\mathit{v}\right) H({\bf x})^{-\epsilon}
\end{equation}
do also satisfy \eqref{eq3.5} for a certain constant $C$. In order to prove this, we shall apply Theorem \ref{schli}. For $\mathit{v}\in S$, let us   define $m+1$ linearly independent linear  forms in the variables $x_0,x_1,\ldots,x_m$ as follows: for each $v\in T$
$$
L_{0,\mathit{v}}({\bf x}):=x_0+x_1+\cdots+x_m,
$$
and for $\mathit{v}\in S\backslash T$, we define $L_{0,\mathit{v}}({\bf x})=x_{i_{0\mathit{v}}}$. 
 For each $\mathit{v}\in S$ and $1\leq i\leq  m$, we define 
$$
L_{i,\mathit{v}}({\bf x})=x_i.
$$
 Now, we need to calculate the following quantity
\begin{align*}
\prod_{k=0}^m\prod_{\mathit{v}\in S}|L_{k,\mathit{v}}({\bf x})|_\mathit{v}=\left(\prod_{k=0}^m\prod_{\mathit{v}\in S\backslash T}|L_{k,\mathit{v}}({\bf x})|_\mathit{v}\right)\cdot\left(\prod_{k=1}^m\prod_{\mathit{v}\in T}|L_{k,\mathit{v}}({\bf x})|_\mathit{v}\right) \prod_{\mathit{v}\in T}|L_{0,\mathit{v}}({\bf x})|_\mathit{v}.
\end{align*}
Using the above linear forms, we get
\begin{align*}
\prod_{k=0}^m\prod_{\mathit{v}\in S}|L_{i,\mathit{v}}({\bf x})|_\mathit{v}&= \left(\prod_{\mathit{v}\in S\backslash T}|L_{0,\mathit{v}}({\bf x})|_\mathit{v}\right)\left(\prod_{k=1}^m\prod_{\mathit{v}\in S}|x_k|_\mathit{v}\right)\prod_{\mathit{v}\in T}|x_0+x_1+\cdots+x_m|_\mathit{v}\\
&=\left(\prod_{\mathit{v}\in S\backslash T}|x_{i_{0\mathit{v}}}|_\mathit{v}\right)\left(\prod_{k=1}^m\prod_{\mathit{v}\in S}|x_k|_\mathit{v}\right)\prod_{\mathit{v}\in T}|x_0+x_1+\cdots+x_m|_\mathit{v}\\
&=\left(\prod_{k=0}^m\prod_{\mathit{v}\in S}|x_k|_\mathit{v}\right)\left(\prod_{\mathit{v}\in T}|x_{i_{0 \mathit{v}}}|_\mathit{v}\right)^{-1}\prod_{\mathit{v}\in T}|x_0+x_1+\cdots+x_m|_\mathit{v}.
 \end{align*}
 By \eqref{eq3.8}, we get 
 $$
\prod_{k=0}^t\prod_{\mathit{v}\in S}|L_{i,\mathit{v}}({\bf x})|_\mathit{v}=\prod_{k=0}^m\prod_{\mathit{v}\in S}|L_{i,\mathit{v}}({\bf x})|_\mathit{v}\leq \frac{1}{H({\bf x})^\epsilon}.
 $$
 Hence, by Theorem \ref{schli}, all the solutions will lie in finitely many proper subspaces of $K^{n+1}$. For each subspace, it is possible to express some of the variables $x_i$ in terms of the other variables $x_i$. Thus, there exist finitely many tuples $(\beta_{j_0}, \beta_{j_1},\ldots,\beta_{j_u})$ of numbers in $K$, where $0 \leq u <m = n$, such that each solution ${\bf x} \in \mathcal{O}_{K,S}^{n+1}$ satisfies at least one of the relations
\begin{equation}\label{eq3.9}
x_0+x_1+\cdots+x_m=\beta_{j_0} x_{j_0}+\beta_{j_1} x_{j_1}+\cdots+\beta_{j_u} x_{j_u}.
\end{equation}
WLOG, we may assume that no subsums of the right-hand side are equal to zero. We set $\mathcal{A}_1:=\{j_0,j_1,\ldots,j_u\}$, $\mathcal{A}_2:=\{0,1,\ldots,m\}\backslash\mathcal{A}_1$. Let $T_1$ be a subset of $T$ such that $i_{0\mathit{v}}\in\mathcal{A}_1$ and $T_2$ be  subset of $T$ such that  that $i_{0\mathit{v}}\in \mathcal{A}_2$. Let $D'$ be a non-zero integer such that $D' \beta_{j_0},\ldots, D'\beta_{j_u}$ are algebraic integers, and set $z_\ell=D'\beta_{j_\ell} x_{j_\ell}$ for $\ell=0,1,\ldots,u$ and ${\bf z}=(z_0, z_1,\ldots,z_u)$.  First we note that, 
\begin{align*}
\left(\prod_{k=0}^m\prod_{\mathit{v}\in S}|x_k|_\mathit{v}\right)&\prod_{\mathit{v}\in T}|x_0+x_1+\cdots+x_m|_\mathit{v}=\left(\prod_{k\in \mathcal{A}_2}\prod_{\mathit{v}\in S}|x_k|_\mathit{v}\right)\left(\prod_{k\in \mathcal{A}_1}\prod_{\mathit{v}\in S}|x_k|_\mathit{v}\right)\prod_{\mathit{v}\in T}|x_0+x_1+\cdots+x_m|_\mathit{v}.
\end{align*}
By \eqref{eq3.9}, we have
\begin{align*}
\left(\prod_{k=0}^m\prod_{\mathit{v}\in S}|x_k|_\mathit{v}\right)&\prod_{\mathit{v}\in T}|x_0+x_1+\cdots+x_m|_\mathit{v}=\left(\prod_{k\in \mathcal{A}_2}\prod_{\mathit{v}\in S}|x_k|_\mathit{v}\right)\left(\prod_{k\in \mathcal{A}_1}\prod_{\mathit{v}\in S}|x_k|_\mathit{v}\right)\prod_{\mathit{v}\in T}|x_0+x_1+\cdots+x_m|_\mathit{v}\\ &\geq c_3\left(\prod_{k\in \mathcal{A}_2}\prod_{\mathit{v}\in S}|x_k|_\mathit{v}\right)\prod_{\ell=0}^u\prod_{\mathit{v}\in S}|z_\ell|_\mathit{v}\left(\prod_{\mathit{v}\in T}|z_0+z_1+\cdots+z_u|_\mathit{v}\right).
\end{align*}
Now by the induction hypothesis, we get 
\begin{align*}\label{eq3.10}
\left(\prod_{k=0}^m\prod_{\mathit{v}\in S}|x_k|_\mathit{v}\right)&\prod_{\mathit{v}\in T}|x_0+x_1+\cdots+x_m|_\mathit{v}\\ &\geq c_3\left(\prod_{k\in \mathcal{A}_2}\prod_{\mathit{v}\in S}|x_k|_\mathit{v}\right)\prod_{\ell=0}^u\prod_{\mathit{v}\in S}|z_\ell|_\mathit{v}\left(\prod_{\mathit{v}\in T}|z_0+z_1+\cdots+z_u|_\mathit{v}\right)\\
&\geq c_4 \left(\prod_{k\in \mathcal{A}_2}\prod_{\mathit{v}\in S}|x_k|_\mathit{v}\right)\left(\prod_{\mathit{v}\in T
}\max\{|z_0|_\mathit{v},|z_1|_\mathit{v},\ldots,|z_u|_\mathit{v}\}\right) H({\bf z})^{-\epsilon/2}\\
&\geq c_5 \left(\prod_{k\in \mathcal{A}_2}\prod_{\mathit{v}\in S}|x_k|_\mathit{v}\right)\left(\prod_{\mathit{v}\in T}\max_{k\in \mathcal{A}_1}|x_k|_\mathit{v}\right) H({\bf x})^{-\epsilon/2}.\tag{12}
\end{align*}
Since $x_k\in \mathcal{O}_{K,S}$, by the product formula, we have $\displaystyle\prod_{\mathit{v}\in S}|x_k|_\mathit{v}\geq 1$. Thus using  this fact in \eqref{eq3.10}, we obtain
$$
\left(\prod_{k=0}^m\prod_{\mathit{v}\in S}|x_k|_\mathit{v}\right)\prod_{\mathit{v}\in T}|x_0+x_1+\cdots+x_m|_\mathit{v}\geq c_5 \left(\prod_{\mathit{v}\in T}\max_{k\in \mathcal{A}_1}|x_k|_\mathit{v}\right) H({\bf x})^{-\epsilon/2}.
$$
If $T_1=T$, by \eqref{eq3.7}, we have  $\displaystyle\max_{k\in \mathcal{A}_1}|x_k|_\mathit{v}=\max_{0\leq k\leq m}|x_k|_\mathit{v}$, and hence we get  
$$
\left(\prod_{k=0}^m\prod_{\mathit{v}\in S}|x_k|_\mathit{v}\right)\prod_{\mathit{v}\in T}|x_0+x_1+\cdots+x_m|_\mathit{v}\geq c_5 \left(\prod_{\mathit{v}\in T}\max_{0\leq k\leq m}|x_k|_\mathit{v}\right) H({\bf x})^{-\epsilon}.
$$
This satisfies the induction hypothesis and thus establishes the inequality \eqref{eq3.5} in Theorem \ref{Evertse}. Now we consider the case where $T_1$ is a proper subset of $T$. This means that there exists at least one $\mathit{v} \in T$ such that $i_{0\mathit{v}} \in \mathcal{A}_2$ but not in $\mathcal{A}_1$. Let $T_2$ denote the collection of such $\mathit{v} \in T$. By \eqref{eq3.7}, we have
\begin{align*}
\left(\prod_{k\in \mathcal{A}_2}\prod_{\mathit{v}\in S}|x_k|_\mathit{v}\right)&\left(\prod_{\mathit{v}\in T_2}\max_{k\in \mathcal{A}_1}|x_k|_\mathit{v}\right)=\left(\prod_{k\in \mathcal{A}_2}\prod_{\mathit{v}\in S}|x_k|_\mathit{v}\right)\left(\prod_{\mathit{v}\in T_2}|x_{i_{0\mathit{v}}}|_\mathit{v}\right).
\end{align*}
From \eqref{eq3.9}, we get $c|x_{i_{0\mathit{v}}}|_\mathit{v}\geq |(\beta_{j_0}-1)x_{j_0}|_\mathit{v}+(\beta_{j_1}-1)x_{j_1}|_\mathit{v}+\cdots+(\beta_{j_u}-1)x_{j_u}|_\mathit{v}$ for some $c>0$, and hence 
\begin{align*}
\left(\prod_{k\in \mathcal{A}_2}\prod_{\mathit{v}\in S}|x_k|_\mathit{v}\right)&\left(\prod_{\mathit{v}\in T_2}\max_{k\in \mathcal{A}_1}|x_k|_\mathit{v}\right)\geq c_6 \left(\prod_{k\in \mathcal{A}_2}\prod_{\mathit{v}\in S}|x_k|_\mathit{v}\right)\\
&\times\left(\prod_{\mathit{v}\in T_2}|(\beta_{j_0}-1)x_{j_0}|_\mathit{v}+(\beta_{j_1}-1)x_{j_1}|_\mathit{v}+\cdots+(\beta_{j_u}-1)x_{j_u}|_\mathit{v}\right).
\end{align*}
By \eqref{eq3.9}, we have $(\beta_{j_0}-1)x_{j_0}|_\mathit{v}+(\beta_{j_1}-1)x_{j_1}|_\mathit{v}+\cdots+(\beta_{j_u}-1)x_{j_u}=\displaystyle\sum_{k\in \mathcal{A}_2} x_k$. Consequently, we obtain 
\begin{align*}
\left(\prod_{k\in \mathcal{A}_2}\prod_{\mathit{v}\in S}|x_k|_\mathit{v}\right)&\left(\prod_{\mathit{v}\in T_2}\max_{k\in \mathcal{A}_1}|x_k|_\mathit{v}\right)\geq c_6 \left(\prod_{k\in \mathcal{A}_2}\prod_{\mathit{v}\in S}|x_k|_\mathit{v}\right)\prod_{\mathit{v}\in T_2}\left|\sum_{k\in \mathcal{A}_2} x_k\right|_\mathit{v}.
\end{align*}
By the induction hypothesis on the RHS of this inequality, we get that 
\begin{equation*}
  \left(\prod_{k\in \mathcal{A}_2}\prod_{\mathit{v}\in S}|x_k|_\mathit{v}\right)\left(\prod_{\mathit{v}\in T_2}\max_{k\in \mathcal{A}_1}|x_k|_\mathit{v}\right)\geq c_7 \left(\prod_{\mathit{v}\in T_2}\max_{k\in \mathcal{A}_2}|x_k|_\mathit{v}\right) H({\bf x})^{-\epsilon/2}.
\end{equation*}
Together with \eqref{eq3.10} this implies that
\begin{align*}
    \left(\prod_{k=0}^m\prod_{\mathit{v}\in S}|x_k|_\mathit{v}\right)&\prod_{\mathit{v}\in T}|x_0+x_1+\cdots+x_m|_\mathit{v}\geq c_8 \left(\prod_{\mathit{v}\in T_1}\max_{k\in \mathcal{A}_1}|x_k|_\mathit{v}\right) \left(\prod_{\mathit{v}\in T_2}\max_{k\in \mathcal{A}_2}|x_k|_\mathit{v}\right) H({\bf x})^{-\epsilon}\\
    &=c_8 \prod_{\mathit{v}\in T}\max_{0\leq k\leq m}|x_k|_\mathit{v}H({\bf x})^{-\epsilon}.
\end{align*}
This completes the proof of Theorem \ref{Evertse}. 
\end{proof}
\section*{Acknowlegdment} This work was initiated when the first author was visiting University of Duisburg-Essen, Germany in May 2024. He extends heartfelt gratitude to the institute for its’ hospitality and support. The first named author acknowledges ANRF-PMECRG grant ANRF/ECRG/2024/002315/PMS.  The third named author gratefully acknowledges the support through the DFG funded Collaborative Research Center SFB 1085 ’Higher Invariants’.

\end{document}